\newcommand{\sm}{\setminus}
\newcommand{\diag}[1]{\text{diag}(#1)}
\newcommand{\sse}{\subseteq}
\newcommand{\spn}{\text{span}}
\newcommand{\cspn}{\overline{\spn}}
\newcommand{\AF}{\mathcal{A}\mathcal{F}}
\newcommand{\nifty}{n\to\infty}
\newcommand{\mc}[1]{\mathcal{#1}}
\newcommand{\mf}[1]{\mathfrak{#1}}
\newcommand{\emp}{\emptyset}
\newcommand{\C}{\mathbb{C}}
\newcommand{\N}{\mathbb{N}}
\newcommand{\Q}{\mathbb{Q}}
\newcommand{\R}{\mathbb{R}}
\newcommand{\Z}{\mathbb{Z}}
\newcommand{\al}{\alpha}
\newcommand{\be}{\beta}
\newcommand{\ga}{\gamma}
\newcommand{\Ga}{\Gamma}
\newcommand{\de}{\delta}
\newcommand{\De}{\Delta}
\newcommand{\ep}{\varepsilon}
\newcommand{\eps}{\epsilon}
\newcommand{\ze}{\zeta}
\renewcommand{\th}{\theta}
\newcommand{\ka}{\kappa}
\newcommand{\la}{\lambda}
\newcommand{\La}{\Lambda}
\newcommand{\si}{\sigma}
\newcommand{\om}{\omega}
 \theoremstyle{plain} 
 \newtheorem{Theorem}{Theorem}[section]
 \newtheorem*{Theorem*}{Theorem}
 \newtheorem{Lemma}[Theorem]{Lemma}
 \newtheorem{Proposition}[Theorem]{Proposition}
 \newtheorem{Corollary}[Theorem]{Corollary}
 \theoremstyle{definition} 
 \newtheorem{Definition}[Theorem]{Definition}
 \newtheorem{Remark}[Theorem]{Remark}
\newtheorem{Notation}[Theorem]{Notation}
\newtheorem*{Notation*}{Notation}
\begin{document}

\title[Spectral Triples on Effros-Shen algebras]{Spectral Triples on a non-standard presentation of Effros-Shen AF algebras} 

  \author{Konrad Aguilar}
  \email{konrad.aguilar@pomona.edu}
\thanks{The first author is partially supported by NSF grant DMS-2316892.}
\address{Department of Mathematics and Statistics, Pomona College\\610 N College Ave\\Claremont, CA 91711\\USA}
             
 \author{Samantha Brooker}
  \email{bsamantha24@vt.edu}
\address{Department of Mathematics, Virginia Tech\\225 Stanger Street\\Blacksburg, VA\\24061\\USA}

 \author{Jack Spielberg}
 \email{spielberg@asu.edu}
\address{School of Mathematical and Statistical Sciences, Arizona State University\\901 S Palm Walk\\Tempe, AZ 85281\\USA}

\keywords{$C^*$-algebras for categories of paths, noncommutative geometry, spectral triples, Effros-Shen algebras, inductive limits of C*-algebras}
\subjclass[2010]{ {46L05};  {46L87}; { 58B34}}

\maketitle
\begin{abstract}
The Effros-Shen algebra corresponding to an irrational number $\theta$ can be described by an inductive sequence of direct sums of matrix algebras, where the continued fraction expansion of $\theta$ encodes the dimensions of the summands, and how the matrix algebras at the $n$th level fit into the summands at the $(n+1)$th level. In recent work, Mitscher and Spielberg present an Effros-Shen algebra as the $C^*$-algebra of a category of paths -- a generalization of a directed graph -- determined by the continued fraction expansion of $\theta$. With this approach, the algebra is realized as the inductive limit of a sequence of infinite-dimensional, rather than finite-dimensional, subalgebras. In the present work, we define a spectral triple in terms of the category of paths presentation of an Effros-Shen algebra, drawing on a construction by Christensen and Ivan. This article describes categories of paths, the example of Mitscher and Spielberg, and the spectral triple construction.
\end{abstract}

\section{Introduction}

In the realm of Noncommutative Geometry \`a la Connes \cite{Con94} and Noncommutatiave  Metric Geometry \`a la Rieffel \cite{Rie04}, there have been many recent advances with respect to inductive limits of C*-algebras \cite{FLLP24, FLP22, BCFL23, FRG19, CI06, AAAKM23}. Of particular importance has been the spectral triple structure, and AF algebras including the family of Effros-Shen algebras \cite{ES80} have been at the forefront of some of these investigations \cite{CI06, AL15, AAAKM23}. In other recent work \cite{MS22}, Mitscher and Spielberg present these Effros-Shen algebras as C*-algebras of categories of paths, a notion introduced by Spielberg
which provides a fairly general framework for studying C*-algebras built from combinatorial
data, such as the higher-rank graph algebras introduced by Kumjian and
Pask   \cite{KP00, Spi14}. With this approach, an Effros-Shen algebra is obtained as the inductive
limit of a sequence of infinite-dimensional subalgebras whose structure comes
from the (generalized) infinite path space of the underlying graphical object. 
With an interest in cross-pollination between the areas of
noncommutative  geometry  and    C*-algebras of categories of paths, we are
motivated to endow these AF algebras with a spectral triple structure that reflects
Mitscher and Spielberg?s alternative perspective.

Our strategy is motivated by the work of  Christensen and Ivan \cite{CI06} in which they provide a recipe for constructing a
spectral triple on a unital AF algebra with a faithful state. Their construction
utilizes the inductive sequence of finite-dimensional subalgebras,  but it
can be adapted to suit our needs, as we will see. Indeed, as the realization of Effros-Shen algebras from \cite{MS22} is given by an inductive sequence of infinite-dimensional C*-subalgebras rather than finite-di\-men\-sional, much of the work and novelty of this article involves finding and utilizing finite-di\-mensional approximations among these infinite-dimensional approximations arising from the categories of path construction in order to build our spectral triple.

We now outline the content of this article. Section 2 describes the objects of
interest: Effros-Shen C*-algebras, spectral triples, and Spielberg?s categories of paths
and their groupoids. This includes a summary of the spectral triple construction
given by Christensen and Ivan, which introduces notation to be used in later sections.
The section ends with a description of the category of paths identified by Mitscher
and Spielberg with a particular Effros-Shen algebra $\mathcal{AF}_\theta$.
 
We begin Section 3 by giving a general idea of our approach to constructing our
spectral triple. The rest of the section details this process: first we decompose the
GNS Hilbert space of  $\mathcal{AF}_\theta$ induced by its unique faithful tracial state into a direct
sum of subspaces according to the structure of the category of paths of \cite{MS22}. Then
we describe the action of $\mathcal{AF}_\theta$ on these finite-dimensional subspaces, and finally we define the spectral triple that is the main objective of this work.

\section{Preliminaries}
\subsection{Effros-Shen 
Algebras}

Let $\theta$ be an irrational real number. By Elliott's theorem, there is a unique simple unital AF algebra $\AF_{\theta}$ such that $K_0(\AF_{\theta}) = \Z + \theta \Z,$ $K_0(\AF_{\theta})_+ = (\Z + \theta \Z)\cap [0, \infty),$ and $[1]_0 = 1$. Effros and Shen give an explicit description in \cite{ES80}. The following is taken from \cite[VI.3]{Dav96}.

Suppose $\theta \in \R$ has simple continued fraction expansion $[a_0, a_1, \dots]$, meaning that $a_0 \in \Z$ and for $i \geq 1$, $a_i \in \N \setminus \{0\}$ such that
\begin{align*}
    \theta = \lim_{\nifty} [a_0, a_1, \dots, a_n] = \lim_{\nifty} a_0 + \cfrac{1}{a_1 + \cfrac{1}{a_2 + \cfrac{1}{a_3 + \cfrac{1}{\ddots + \cfrac{1}{a_n}}}}}.
\end{align*}

Let $p_n/q_n = [a_0, a_1, \dots, a_n]$, where $p_n, q_n$ are given by
$$\begin{array}{ll}
     p_0 = a_0 \quad& q_0 = 1  \\
     p_1 = a_0 a_1 + 1 \quad& q_1 = a_1\\ 
     p_n = a_n p_{n-1} + p_{n-2} \quad& q_n = a_n q_{n-1} + q_{n-2} \quad \text{ for } n \geq 2 \\
\end{array}$$
Thus,
\begin{align*}
    \begin{bmatrix}
    p_n & q_n \\ p_{n-1} & q_{n-1}
    \end{bmatrix}
    = \begin{bmatrix}
    a_n & 1 \\ 1 & 0
    \end{bmatrix}
    \begin{bmatrix}
    p_{n-1} & q_{n-1} \\ p_{n-2} & q_{n-2}
    \end{bmatrix}
\end{align*}
Then set $\mc{A}_n = M_{q_n} \oplus M_{q_{n-1}}$, and define the map $\alpha_{n-1, n}: \mc{A}_{n-1} \to \mc{A}_{n}$ by
\begin{align*}
    \alpha_{n-1,n}(a,b) = (\diag{a, \dots, a, b}, a)
\end{align*}
where there are $a_n$ copies of $a$ in the block diagonal matrix in the first coordinate of the result. Then $(\mc{A}_n, \alpha_{n,n+1})_{n=1}^{\infty}$ is an inductive sequence of finite-dimensional $C^*$-algebras with unital embeddings, and the inductive limit $$\AF_{\theta} := \lim_{\longrightarrow} \mc{A}_n$$ is the Effros-Shen algebra corresponding to $\theta$.

\subsection{Spectral Triples}

The following is taken from \cite{CI06}:
\begin{Definition}\label{spectral triple def}
A \textit{spectral triple} is a triple $(A, H, D)$ where $A$ is a unital $C^*$-algebra, $H$ is a Hilbert space which is a left $A$-module (ie, we regard $A$ as a subalgebra of $B(H)$ by way of a *-representation), and $D$ is an unbounded self-adjoint operator on $H$ such that
\begin{enumerate}[(a)]
    \item the set $B := \{a \in A: [D,a]$ is densely defined and extends to a bounded operator on $H\}$ is norm-dense in $A$, and
    
    \item $(1 + D^2)^{-1}$ is a compact operator.
\end{enumerate}
\end{Definition}

Let $A$ be a unital AF-algebra with faithful state $\tau$, and let $\{A_n\}_{n=0}^{\infty}$ be an increasing sequence of finite-dimensional subalgebras of $A$, with $A_0 = \C 1_A$, such that $A = \overline{\bigcup A_n}$. In \cite{CI06}, Christensen and Ivan give a way of constructing a spectral triple on $A$ using the inductive sequence. We review this construction here:

Let $H$ denote the GNS Hilbert space $L^2(A, \tau)$, that is, the Hilbert space completion of $A$ with inner product $\langle a, b \rangle = \tau(b^*a)$. Since $\tau$ is faithful, we can regard $A$ as a subalgebra of $B(H)$. Let $\xi$ be the unit vector in $H$ coming from the GNS construction, which is cyclic and separating for $A$, and satisfies $\tau(a) = \langle a \xi, \xi \rangle$ for all $a \in A$. Denote by $\eta : A \to H$ the map that sends $a \in A$ to $a \xi \in H$, which is injective since $\tau$ is faithful.

For each $n$, $H_n := \eta (A_n)$ is a finite-dimensional (hence closed) subspace of $H.$
Let $P_n$ be the orthogonal projection of $H$ onto $H_n$. On the $C^*$-algebra side of things, we get a continuous projection $\pi_n: A \to A_n$ by $$\pi_n(a)= \left( \eta|_{A_n} \right)^{-1}(P_n(\eta(a))).$$ This is well-defined since $\eta$ is injective, and bounded since $\eta(A_n)$ is finite dimensional. So we have that $\pi_n(a)\xi = P_n a \xi$ for all $a \in A$. 

Now, the sequence $(P_n)_{n=0}^{\infty}$ of increasing projections yields a sequence $(Q_n)_{n=1}^{\infty}$ of pairwise orthogonal projections by letting $Q_n = P_n - P_{n-1}$. For any sequence $(\alpha_n)_{n=0}^{\infty}$ increasing to infinity with $\alpha_0 = 0$, define an unbounded operator $D = \sum_{n=1}^{\infty} \alpha_n Q_n$ on $H$.

For any $n \in \N$, $a \in A_n$ commutes with $P_m$ for any $m \geq n$, and hence $a$ commutes with $Q_m$ for any $m > n$. So the commutator $[D,a] = \sum_{i=1}^n \alpha_i [Q_i, a]$ is densely defined, because it is defined on $\left( \bigcup A_i \right) \xi$, and it extends to a bounded operator on $H$.
This holds for all $a \in \bigcup A_i$, which is norm-dense in $A$, so definition \ref{spectral triple def}(a) is satisfied.

\subsection{Categories of Paths}

In \cite{MS22}, Mitscher and Spielberg present an alternative view of $\AF_{\theta}$ as the $C^*$-algebra of a groupoid built from a certain category of paths. From the  groupoid, they follow the standard construction of the groupoid $C^*$-algebra (the details of which can be found in, for instance, \cite{Put19}).

\begin{Definition}
    A category is \textit{small} if the collection of objects and the collection of morphisms are both sets.
\end{Definition}

\begin{Definition}[\cite{Spi14}]\label{def-cat-of-paths}
    A \textit{category of paths} $\Lambda$ is a small category satisfying for all $\mu, \nu, \ze \in \La$,
    \begin{enumerate}[(1)]
        \item $\mu\nu= \mu \ze$ implies $\nu = \ze$ \textit{(left-cancellation)}
        \item $\nu\mu= \ze\mu$ implies $\nu = \ze$ \textit{(right-cancellation)}
        \item $\mu\nu = s(\nu)$ implies $\mu = \nu = s(\nu)$ \textit{(no inverses)}
    \end{enumerate}
\end{Definition}

We can think of the category of paths $\La$ from \cite{MS22} as the path space of a countable directed graph (ie, it has countable vertex and edge sets), with some identifications. The set of \textit{edges} in the graph is denoted by $\Lambda^1$, and the set of \textit{vertices} by $\Lambda^0$. An element of $\Lambda$ is a path $\mu$, which we can write (not necessarily uniquely) as a finite word over the alphabet of edges: $\mu = \mu_1 \mu_2 \cdots \mu_n$, where each $\mu_i \in \Lambda^1$, and $\mu_i$ and $\mu_{i+1}$ are composable edges: the \textit{source} vertex $s(\mu_i)$  is equal to the \textit{range} vertex $r(\mu_{i+1})$.
\[
\begin{tikzpicture}
    \node (v0) {} 
    -- ++(2.5,0) node (v1) {$s(\mu_1) = r(\mu_2)$} 
    -- ++(2.5,0) node (v2) {} 
    -- ++(1.5,0) node (v3) {} 
    -- ++(2,0) node (v4) {};
    \draw[latex-] (v0) -- (v1) node[pos=0.5, above=1pt] (e1) {$\mu_1$};
    \draw[latex-] (v1) -- (v2) node[pos=0.5, above=1pt] (e2) {$\mu_2$};
    \draw[latex-] (v2) -- (v3) node[pos=0.5, above=1pt]  {$\dots$};
    \draw[latex-] (v3) -- (v4) node[pos=0.5, above=1pt] (en) {$\mu_n$};
\end{tikzpicture}
\]

\begin{Definition}[\cite{Spi14}]\label{def shift map}
    Let $\La$ be a category of paths. For any $\mu \in \La$, we denote the \textit{extensions} of $\mu$ by $\mu\La = \{\mu \nu : \nu \in s(\mu) \La\}$, where $v \La = \{\la \in \La : r(\la) = v\}$ for $v \in \La^0$. We call $\mu \nu$ a \textit{proper} (or \textit{nontrivial) extension} of $\mu$ if $\mu \neq \mu \nu$. If $\mu, \eta \in \La$ and $\mu \La \cap \eta \La \neq \emp$, we say that $\mu$ and $\eta$ have a \textit{common extension,} or that $\mu$ \textit{meets} $\eta$, which we write $\mu \Cap \eta$.
    
    If $\la = \mu \nu$, we say that $\nu$ is a \textit{suffix} (and $\mu$ is a \textit{prefix)} 
    of $\la$. 
    We say $\la$ and $\th \in \La$ have a \textit{nontrivial common suffix} if there exist $\la', \th' \in \La$ and $\nu \in \La\sm \La^0$ such that $\la = \la' \nu$ and $\th = \th' \nu$.

    We define the \textit{left shift} $\si^{\mu} : \mu\La \to s(\mu) \La$ by $\si^{\mu}(\mu \nu) = \nu$. If $E \sse \La$, we implicitly understand $\si^{\mu}(E)$ to mean $\si^{\mu}(E \cap \mu \La)$. 
\end{Definition}

The category of paths $\Lambda$ that we consider in this work is defined in \cite[Section 3]{MS22}, and is an amalgamation of two categories of paths, $\Lambda_1$ and $\Lambda_2$. Each can be represented by a graph on the set of vertices $\Lambda^0 = \{v_1, v_2, \dots\}$. First, we consider $\Lambda_1$:

\begin{figure}[h!]
\[
\begin{tikzpicture}[xscale=2,yscale=2]

\node (00) at (.5,0) [rectangle] {$\Lambda_1$:};

\node (10) at (1,0) [rectangle] {$v_1$};
\node (20) at (2,0) [rectangle] {$v_2$};
\node (30) at (3,0) [rectangle] {$v_3$};
\node (40) at (4,0) [rectangle] {$v_4$};
\node (50) at (5,0) [rectangle] {$v_5$};
\node (5half0) at (5.35,0) [rectangle] {$\boldsymbol{\cdots}$};

\draw[-latex,thick,dotted, red] (20) to[bend right=40] node[above] {$\alpha_1$} (10);
\draw[-latex,thick,dotted, red] (30) to[bend right=40] node[above] {$\alpha_2$} (20);
\draw[-latex,thick,dotted, red] (40) to[bend right=40] node[above] {$\alpha_3$} (30);
\draw[-latex,thick,dotted, red] (50) to[bend right=40] node[above] {$\alpha_4$} (40);

\draw[-latex,thick,dashed, blue] (20) to[bend left=40] node[below] {$\beta_1$} (10);
\draw[-latex,thick,dashed, blue] (30) to[bend left=40] node[below] {$\beta_2$} (20);
\draw[-latex,thick,dashed, blue] (40) to[bend left=40] node[below] {$\beta_3$} (30);
\draw[-latex,thick,dashed, blue] (50) to[bend left=40] node[below] {$\beta_4$} (40);

\end{tikzpicture}
\]
    \caption{$\La_1$}
    \label{fig:Lambda1}
\end{figure}
\noindent In $\Lambda_1$, the two colors of edges commute: 
for instance, $\alpha_1 \alpha_2 \beta_3 \alpha_4 = \beta_1 \alpha_2 \alpha_3 \alpha_4$. Thus, we can identify each path $\mu$ in $\Lambda_1$ by the number of $\alpha$-edges, the number of $\beta$-edges, and the vertex at the range of $\mu$,
so that if $\mu = \alpha_1 \alpha_2 \beta_3 \alpha_4$ as before, we can write $\mu = v_1 \alpha^3 \beta$. In fact, $\Lambda_1$ has a \textit{degree-functor} $\mf{d}: \Lambda_1 \to \N_0^2$, under which $\mf{d}(\alpha) = (1,0)$ and $\mf{d}(\beta) = (0,1)$, so that $\mf{d}(\alpha^p \beta^q) = (p,q)$ (ie, $\Lambda_1$ is a \textit{graph of rank 2}).

The second piece of our category of paths is $\Lambda_2$, which is determined by a sequence $(k_i)_{i=1}^{\infty}$ of nonnegative integers, with infinitely many $k_i \neq 0$. For each $i =1, 2, \dots,$ we have $k_i$ edges $\{ \gamma_i^{(1)}, \gamma_i^{(2)}, \dots, \gamma_i^{(k_i)} \}$ with $r(\gamma_i^{(j)}) = v_i$ and  $s(\gamma_i^{(j)}) = v_{i+1}$:
\begin{figure}[h]
\[
\begin{tikzpicture}[scale=2]
\node (00) at (.5,0) [rectangle] {$\Lambda_2$:};

\node (10) at (1,0) [rectangle] {$v_1$};
\node (20) at (2,0) [rectangle] {$v_2$};
\node (30) at (3,0) [rectangle] {$v_3$};
\node (40) at (4,0) [rectangle] {$v_4$};

\node (4half0) at (4.5,0) [rectangle] {$\boldsymbol{\cdots}$};
\node (1half1) at (1.5,.85) [rectangle] {$\boldsymbol{\vdots}$};
\node (2half1) at (2.5,.85) [rectangle] {$\boldsymbol{\vdots}$};
\node (23half1) at (3.5,.85) [rectangle] {$\boldsymbol{\vdots}$};

\draw[-latex,thick] (20) -- (10) node[pos=0.5,inner sep=0.5pt,above=1pt] {$\gamma^{(1)}_1$};
\draw[-latex,thick] (20) .. controls (1.75,.45) and (1.25,.45) .. (10) node[pos=0.5,inner sep=0.5pt,above=1pt] {$\gamma^{(2)}_1$};
\draw[-latex,thick] (20) .. controls (1.75,1.3) and (1.25,1.3) .. (10) node[pos=0.5,inner sep=0.5pt,above=1pt] {$\gamma^{(k_1)}_1$};

\draw[-latex,thick] (30) -- (20) node[pos=0.5,inner sep=0.5pt,above=1pt] {$\gamma^{(1)}_2$};
\draw[-latex,thick] (30) .. controls (2.75,.45) and (2.25,.45) .. (20) node[pos=0.5,inner sep=0.5pt,above=1pt] {$\gamma^{(2)}_2$};
\draw[-latex,thick] (30) .. controls (2.75,1.3) and (2.25,1.3) .. (20) node[pos=0.5,inner sep=0.5pt,above=1pt] {$\gamma^{(k_2)}_2$};

\draw[-latex,thick] (40) -- (30) node[pos=0.5,inner sep=0.5pt,above=1pt] {$\gamma^{(1)}_3$};
\draw[-latex,thick] (40) .. controls (3.75,.45) and (3.25,.45) .. (30) node[pos=0.5,inner sep=0.5pt,above=1pt] {$\gamma^{(2)}_3$};
\draw[-latex,thick] (40) .. controls (3.75,1.3) and (3.25,1.3) .. (30) node[pos=0.5,inner sep=0.5pt,above=1pt] {$\gamma^{(k_3)}_3$};

\end{tikzpicture}
\]
    \caption{$\Lambda_2$}
    \label{fig:Lambda2}
\end{figure}
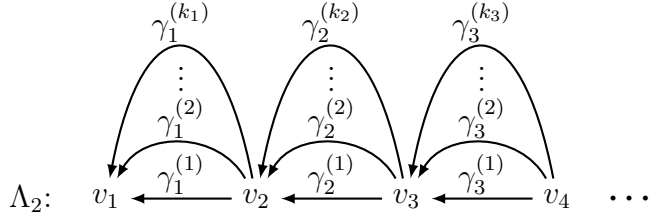

When amalgamated to form $\Lambda$, we arrive at the object depicted in Figure \ref{fig:Lambda}.
\begin{figure}[h]
\[
\begin{tikzpicture}[xscale=2.75,yscale=2]

\node (00) at (.5,0) [rectangle] {$\Lambda$:};

\node (10) at (1,0) [rectangle] {$v_1$};
\node (20) at (2,0) [rectangle] {$v_2$};
\node (30) at (3,0) [rectangle] {$v_3$};
\node (40) at (4,0) [rectangle] {$v_4$};

\node (4half0) at (4.35,0) [rectangle] {$\boldsymbol{\cdots}$};
\node (1half1) at (1.5,.85) [rectangle] {$\boldsymbol{\vdots}$};
\node (2half1) at (2.5,.85) [rectangle] {$\boldsymbol{\vdots}$};
\node (23half1) at (3.5,.85) [rectangle] {$\boldsymbol{\vdots}$};

\draw[-latex,thick] (20) -- (10) node[pos=0.5,inner sep=0.5pt,above=1pt] {$\gamma^{(1)}_1$};
\draw[-latex,thick] (20) .. controls (1.75,.45) and (1.25,.45) .. (10) node[pos=0.5,inner sep=0.5pt,above=1pt] {$\gamma^{(2)}_1$};
\draw[-latex,thick] (20) .. controls (1.75,1.3) and (1.25,1.3) .. (10) node[pos=0.5,inner sep=0.5pt,above=1pt] {$\gamma^{(k_1)}_1$};

\draw[-latex,thick] (30) -- (20) node[pos=0.5,inner sep=0.5pt,above=1pt] {$\gamma^{(1)}_2$};
\draw[-latex,thick] (30) .. controls (2.75,.45) and (2.25,.45) .. (20) node[pos=0.5,inner sep=0.5pt,above=1pt] {$\gamma^{(2)}_2$};
\draw[-latex,thick] (30) .. controls (2.75,1.3) and (2.25,1.3) .. (20) node[pos=0.5,inner sep=0.5pt,above=1pt] {$\gamma^{(k_2)}_2$};

\draw[-latex,thick] (40) -- (30) node[pos=0.5,inner sep=0.5pt,above=1pt] {$\gamma^{(1)}_3$};
\draw[-latex,thick] (40) .. controls (3.75,.45) and (3.25,.45) .. (30) node[pos=0.5,inner sep=0.5pt,above=1pt] {$\gamma^{(2)}_3$};
\draw[-latex,thick] (40) .. controls (3.75,1.3) and (3.25,1.3) .. (30) node[pos=0.5,inner sep=0.5pt,above=1pt] {$\gamma^{(k_3)}_3$};


\node (10) at (1,0) [rectangle] {$v_1$};
\node (20) at (2,0) [rectangle] {$v_2$};
\node (30) at (3,0) [rectangle] {$v_3$};
\node (40) at (4,0) [rectangle] {$v_4$};

\draw[-latex,thick,dotted,red] (20) .. controls (1.75,-.35) and (1.25,-.35) .. (10) node[pos=0.5,inner sep=0.5pt,above=1pt] {$\alpha_1$};
\draw[-latex,thick,dotted,red] (30) .. controls (2.75,-.35) and (2.25,-.35) .. (20) node[pos=0.5,inner sep=0.5pt,above=1pt] {$\alpha_2$};
\draw[-latex,thick,dotted,red] (40) .. controls (3.75,-.35) and (3.25,-.35) .. (30) node[pos=0.5,inner sep=0.5pt,above=1pt] {$\alpha_3$};

\draw[-latex,thick,dashed,blue] (20) .. controls (1.75,-.7) and (1.25,-.7) .. (10) node[pos=0.5,inner sep=0.5pt,below=1pt] {$\beta_1$};
\draw[-latex,thick,dashed,blue] (30) .. controls (2.75,-.7) and (2.25,-.7) .. (20) node[pos=0.5,inner sep=0.5pt,below=1pt] {$\beta_2$};
\draw[-latex,thick,dashed,blue] (40) .. controls (3.75,-.7) and (3.25,-.7) .. (30) node[pos=0.5,inner sep=0.5pt,below=1pt] {$\beta_3$};

\end{tikzpicture}
\]
    \caption{$\Lambda$}
    \label{fig:Lambda}
\end{figure}
While the $\alpha$'s and $\beta$'s are free to commute amongst themselves, the $\gamma$'s are like closed gates through which the $\Lambda_1$-edges cannot move.

Every path  $\lambda \in \Lambda \setminus \Lambda^0$ can be written in \textit{normal form} \cite[Lemma 3.3]{MS22} as the concatenation of paths alternately from $\Lambda_1$ and $\Lambda_2$: there exist unique $m \geq 1$ and $\theta_1, \dots, \theta_m \in \Lambda_1 \cup \Lambda_2$ such that 
\begin{itemize}
    \item $\lambda = \theta_1 \cdots \theta_m$
    \item $\theta_i \not\in \Lambda^0$
    \item $\theta_i \in \Lambda_j$ implies $\theta_{i+1} \in \Lambda_k$, where $\{j, k\} = \{1,2\}$.
\end{itemize}
If $\theta \in \Lambda_1 \cup \Lambda_2$, we let $|\theta|$ denote the number of edges in $\theta$, and if $\lambda = \theta_1 \cdots \theta_m$ is in normal form, let $|\lambda| = \sum_1^m |\theta_i|$.

We now describe the infinite paths of $\Lambda$. The infinite paths in $\Lambda_1$ are $\Lambda_1^{\infty} := \{ \alpha^p\beta^q : p+q = \infty\}$. The infinite paths of $\Lambda_2$ are infinite words $\Lambda_2^{\infty} := \{x_1 x_2 \cdots : x_i \in \Lambda_2^1,$ and $\forall n, \ x_1 x_2 \cdots x_n \in \Lambda_2\}.$ Of course, $\Lambda_2^{\infty}$ will be empty unless there is some $N \in \N$ such that $k_i \neq 0$ for all $i \geq N$. Now, paraphrasing  \cite[Theorem 3.9]{MS22}:
\begin{Theorem}
    Let $x$ be an infinite path in $\Lambda$. Then $x$ has one of the following forms:
    \begin{enumerate}[(1)]
        \item $x = \theta_1 \theta_2 \cdots$, where $\theta_1 \theta_2 \cdots \theta_n$ is in normal form for all $n$

        \item $x = \theta_1 \theta_2 \cdots \theta_{M-1} y$, where $\theta_1 \theta_2 \cdots \theta_{M-1}$ is in normal form, $\theta_{M-1} \in \Lambda_2$ if $M>1$, and $y \in \Lambda_1^{\infty}$

        \item $x = \theta_1 \theta_2 \cdots \theta_{M-1} y$, where $\theta_1 \theta_2 \cdots \theta_{M-1}$ is in normal form, $\theta_{M-1} \in \Lambda_1$ if $M>1$, and $y \in \Lambda_2^{\infty}$.
    \end{enumerate}
\end{Theorem}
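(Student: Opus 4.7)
The plan is to analyze the infinite path $x$ via the normal forms of its finite prefixes. For each $n \geq 1$ let $x[n]$ denote the prefix of $x$ consisting of its first $n$ edges, and invoke \cite[Lemma 3.3]{MS22} to write $x[n] = \theta_1^{(n)} \cdots \theta_{m_n}^{(n)}$ in normal form. The proof rests on understanding how this decomposition changes when one passes from $x[n]$ to $x[n+1]$ by appending a single edge $e$.

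First I would establish a monotonicity lemma: appending $e$ either prolongs the terminal factor $\theta_{m_n}^{(n)}$ (when $e$ lies in the same $\Lambda_j$ as that factor) or opens a fresh terminal factor $\theta_{m_n+1}^{(n+1)} = e$ (when $e$ lies in the opposite $\Lambda_k$). Using uniqueness of normal form, this shows that $\theta_i^{(n+1)} = \theta_i^{(n)}$ for $i < m_n$, and that $m_n$ is nondecreasing in $n$. The sequence $(m_n)$ therefore either diverges or eventually stabilizes.

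In the first case I would fix $k$ and pick any $n$ with $m_n \geq k+1$; the monotonicity lemma makes $\theta_1^{(n)}, \ldots, \theta_k^{(n)}$ independent of $n$, producing a canonical sequence $\theta_1, \theta_2, \ldots$ in $\Lambda_1 \cup \Lambda_2$ such that every initial string $\theta_1 \cdots \theta_k$ is in normal form. This gives form (1). In the second case, $(m_n)$ stabilizes at some $M$, so the factors $\theta_1, \ldots, \theta_{M-1}$ are eventually constant and all edges appearing after the corresponding prefix lie in a single $\Lambda_j$. The terminal factors $\theta_M^{(n)}$ then form a nested family of increasingly long paths in $\Lambda_j$, whose union defines an infinite path $y \in \Lambda_j^\infty$. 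If $j=1$ this must in fact be of the form $\alpha^p\beta^q$ with $p+q=\infty$; if $j=2$ it is an infinite $\gamma$-word. The alternation condition in the normal form forces $\theta_{M-1}$ (when $M>1$) to belong to the opposite $\Lambda_k$, matching either form (2) or form (3).

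The main obstacle is the monotonicity lemma, which requires checking that no hidden relation (commutation of $\alpha$ with $\beta$, or the non-commuting behavior of the $\gamma$-edges across the category) can force two consecutive normal-form factors of the same $\Lambda_j$ to merge or a factor to disappear upon extension. This reduces to careful use of the uniqueness in \cite[Lemma 3.3]{MS22} together with the explicit structure of $\Lambda_1$ and $\Lambda_2$. The passage from the nested family $(\theta_M^{(n)})$ to a genuine element of $\Lambda_j^\infty$ is then immediate from the descriptions of $\Lambda_1^\infty$ and $\Lambda_2^\infty$ given earlier in this subsection.
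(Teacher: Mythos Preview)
The paper does not supply its own proof of this statement: it is quoted from \cite[Theorem 3.9]{MS22} and stated without argument, so there is no in-paper proof to compare against. Your outline is a reasonable reconstruction of how such a result is typically proved, and nothing in it looks wrong. The monotonicity lemma is indeed the only real content; once you know that appending a single edge to a finite path either lengthens the terminal normal-form factor or creates a new one (and never disturbs earlier factors), the trichotomy follows by the case split you describe. That lemma is a direct consequence of the uniqueness clause in \cite[Lemma 3.3]{MS22}: the concatenation $(\theta_1 \cdots \theta_{m_n-1})(\theta_{m_n} e)$ or $(\theta_1 \cdots \theta_{m_n})(e)$ is already in normal form, hence must coincide with the normal form of $x[n+1]$.

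One small point worth tightening in your write-up: when the terminal factors stabilize in $\Lambda_1$, you should verify that the limiting object really lies in $\Lambda_1^\infty = \{\alpha^p\beta^q : p+q=\infty\}$ rather than being some more exotic infinite $\Lambda_1$-path. This is immediate from the degree functor $\mathfrak{d}$ on $\Lambda_1$ (the increasing prefixes all have the form $\alpha^{p_n}\beta^{q_n}$ with $p_n + q_n = n$ and both sequences nondecreasing), but it deserves a sentence.
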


We now describe the groupoid $\mc{G}$ arising from $\Lambda$. 

\begin{Definition}[\cite{Sims18}]
    A \textit{groupoid} is a set $\mc{G}$ together with a distinguished subset $\mc{G}^{(2)} \sse \mc{G} \times \mc{G}$, a multiplication map $(\al, \be) \mapsto \al \be$ from $\mc{G}^{(2)}$ to $\mc{G}$, and an inverse map $\ga \mapsto \ga^{-1}$ from $\mc{G}$ to $\mc{G}$ such that
    \begin{enumerate}[(1)]
        \item $(\ga^{-1})^{-1}= \ga$ for all $\ga \in \mc{G}$;
        \item if $(\al, \be)$ and $(\be, \ga)$ belong to $\mc{G}^{(2)}$, then $(\al \be, \ga)$ and $\al, \be \ga)$ belong to $\mc{G}^{(2)}$, and $(\al \be) \ga = \al (\be \ga)$; and

        \item $(\ga, \ga^{-1} \in \mc{G}^{(2)}$ for all $\ga \in \mc{G}$, and for all $(\ga, \eta) \in \mc{G}^{(2)},$ we have $\ga^{-1} (\ga \eta) = \eta$ and $(\ga \eta) \eta^{-1} = \ga$.
    \end{enumerate}
\end{Definition}

Let $X$ denote the collection of infinite paths in $\Lambda$. Then $$\mc{G} = \{ (\mu, \nu, x) : \mu, \nu \in v_1 \Lambda, x \in X, s(\mu) = s(\nu) = r(x) \} / \sim $$ where $\sim$ is the equivalence relation generated by $(\mu, \nu, \ep x) \sim (\mu \ep, \nu \ep, x)$. Thus a groupoid element is an equivalence class $[\mu, \nu, x]$, and the source and range maps for $\mc{G}$ are defined by $s([\mu, \nu, x]) = \nu x$ and $r([\mu, \nu, x]) = \mu x$, respectively. Multiplication is defined on ordered pairs in $\mc{G}^{(2)} := \{([\mu, \nu, \delta x], [\xi, \eta, \ep x]) \in \mc{G} \times \mc{G}: \nu \delta = \xi \ep\}$, so that \begin{align*}
    [\mu, \nu, \delta x] \cdot [\xi, \eta, \ep x] &= [\mu\delta, \nu \delta, x] \cdot [\xi \ep, \eta \ep, x]\\
    &= [\mu\delta, \eta \ep, x].
\end{align*}
The \textit{unit space} of the groupoid is $\mc{G}^{(0)} = \{[v_1, v_1, x] : x \in v_1 X\},$ which we identify with $X_1 := v_1 X$, the set of infinite paths ending at $v_1$. In general, we let $X_i = v_i X$.

For $\mu \in \Lambda$, let $Z(\mu) = \mu X$, the set of infinite paths that \textit{extend} $\mu$. Then 
\begin{equation}\label{compact-open-bis}
    \mc{B} := \left\{ Z(\mu) \setminus \cup_{i=1}^k Z(\nu_i) : \mu, \nu_1, \dots, \nu_k \in \Lambda\right\}
\end{equation} 
is a base consisting of compact open bisections for a totally disconnected, locally compact Hausdorff topology on $X$. For $m \ge 1$, let $Z_m = Z(v_m),$ and let $\mc{B}_{v_m} \equiv \mc{B}_{m} = \{E \in \mc{B} : E \sse Z(v_m)\}$.
For $F \subseteq Z_m$, we let $[\mu, \nu, F] = \{[\mu, \nu, x] : x \in F\}$. The collection $\{ [\mu, \nu, F] : \mu, \nu \in v_1 \La v_m, F \in \mc{B}_m \}$ is then a base of compact open bisections for a locally compact Hausdorff topology on $\mc{G}$ with respect to which $\mc{G}$ is an ample Hausdorff groupoid.

From there, the $C^*$-algebra $A := C^*(\mc{G})$ is generated by the characteristic functions of basic compact-open bisections, $\chi_{[\mu, \nu, F]}$, with adjoint defined by $\chi_{[\mu,\nu,F]}^* = \chi_{[\nu, \mu, F]},$ and multiplication defined by convolution:  the convolution of two such characteristic functions is \begin{align*}
    \chi_{[\mu, \nu, F]} * \chi_{[\delta, \ep, E]} = \chi_{[\mu, \nu, F] \cdot [\delta, \ep, E]},
\end{align*}
where $[\mu, \nu, F] \cdot [\delta, \ep, E] = \left\{[\mu, \nu, x] \cdot [\delta, \ep, y] : x \in F, y \in E\right\}$.


Here is how $\AF_{\theta}$ emerges from this setup: Let $\sigma \in \R \sm \Q$ have simple continued fraction expansion $\sigma = [c_0, c_1, \dots]$. Define a sequence of integers $(k_i)_{i=1}^{\infty}$ as follows (\cite{MS22} Theorem 5.12): Let $k_1$ be arbitrary, and for $p\geq 0$,
\begin{align*}
    k_i = \begin{cases}
        0 &: c_1 + c_3 + \cdots + c_{2p-1} + 2 < i < c_1 + c_3 + \cdots + c_{2p+1} + 2\\
        c_{2p} &: i = c_1 + c_3 + \cdots + c_{2p-1} + 2.
    \end{cases}
\end{align*}
Visually we may represent this by
\begin{align*}
    (k_i)_{i=1}^{\infty} = (k_1, c_0, \underbrace{0, \dots, 0}_{c_1 - 1}, c_2,  \underbrace{0, \dots, 0}_{c_3 - 1}, c_4, \underbrace{0, \dots, 0}_{c_5 - 1}, c_6, \dots).
\end{align*}
Let $\Lambda$ be the category of paths defined before, with $\Lambda_2$ determined by the sequence $(k_i)_{i=1}^{\infty}$ just described, and let $\mc{G}$ be the groupoid built from $\Lambda.$ Let $\theta = [0, 1, k_1, 1, k_2, 1, \dots]$. Then $C^*(\mc{G})$ is isomorphic to the Effros-Shen algebra $\AF_{\theta}$ \cite[ Theorem 7.2]{MS22}.

By \cite[Lemma 6.8 and Theorem 6.10]{MS22}, there exists a unique invariant Borel probability measure $\mf{m}$ on $\mc{G}^{(0)}$. On the basic cylinder sets $Z(\la_1 \cdots \la_h)$ and $Z(\la_1 \cdots \la_h) \sm Z(\la_1 \cdots \la_h \la_{h+1})$, where $\la_1, \dots, \la_h, \la_{h+1}\in \La^1$, $\mf{m}$ depends only on $h$. 

The state $\tau$ of \cite[Theorem 7.10]{MS22} on $C^*(\mc{G})$ coming from the invariant measure $\mf{m}$, is faithful and tracial \cite[Theorem 7.10]{MS22}. We use the data of $A = C^*(\mc{G})$ and $\tau$ to construct  the GNS Hilbert space $H$ with cyclic vector $\xi \in H$, and set $\eta: a \in A \mapsto a\xi \in H$, which is injective since $\tau$ is faithful. Then,
\begin{equation}
\begin{split}\label{inner-product-formula}
        \langle \eta(\chi_{[\de, \ep, E]}), \eta(\chi_{[\de', \ep', E']})\rangle &= \tau(\chi_{[\de', \ep', E']}^* * \chi_{[\de, \ep, E]})\\
        &= \tau(\chi_{[\ep', \de', E']} * \chi_{[\de, \ep, E]})\\
        &= \tau (\chi_{[\ep', \de', E'][\de, \ep, E]})\\
        &= \mf{m}([\ep', \de', E'][\de, \ep, E] \cap \mc{G}^{(0)}).
\end{split}
\end{equation}
Note that if $\de, \ep \in v_m \La$ and $E \sse Z_m$, then $[\de, \ep, E] \cap \mc{G}^{(0)} \neq \emp$ if and only if $\de = \ep$. So if $\de \neq \ep$ then $\tau(\chi_{[\de, \ep, E]}) = 0$.

Also, if $\de, \ep \in v_1 \La v_m$, $E, F \sse X_m$, and $E\cap F = \emp$, then $\eta(\chi_{[\de, \ep E]}) \perp \eta(\chi_{[\de, \ep F]})$, for we see
\begin{align*}
        \langle \eta(\chi_{[\de, \ep E]}), \eta(\chi_{[\de, \ep F]})\rangle &= \langle \chi_{[\de, \ep, E]} \xi, \chi_{[\de, \ep, F]} \xi\rangle\\
        &= \tau(\chi_{[\de, \ep, F]}^* *\chi_{[\de, \ep, E]})\\
        &= \tau(\chi_{[\ep, \de, F] \cdot [\de, \ep, E]})\\
        &= \tau(\chi_{[\ep, \ep, F\cap E]})\\
        &= \tau (\chi_{\emp})\\
        &=0.
    \end{align*}

\section{Constructing a Spectral Triple}
\subsection{Spectral Triples}
\label{sec spectral triple}
For $i \in \N \sm \{0\}$, let $\mc{G}_i$ be the subgroupoid of $\mc{G}$ generated by $\{ [\mu, \nu, x] \in \mc{G}: |\mu| = |\nu| \leq i\}$. Then \cite[Theorem 5.1]{MS22}  states:
\begin{Theorem}
    $C^*(\mc{G})$ is the limit of the inductive sequence $$C^*(\mc{G}_1) \to C^*(\mc{G}_2) \to \cdots$$ where the connecting maps are induced from the inclusion maps $C_c(\mc{G}_i) \hookrightarrow C_c(\mc{G}_{i+1})$.
\end{Theorem}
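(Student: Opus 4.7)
The plan is to realize each $\mathcal{G}_i$ as an open subgroupoid of the ample Hausdorff groupoid $\mathcal{G}$, verify that the family $\{\mathcal{G}_i\}_{i \geq 1}$ exhausts $\mathcal{G}$, and then invoke the standard machinery that identifies the $C^*$-algebra of an ascending union of open subgroupoids with the $C^*$-inductive limit. To see openness, observe that the generating set of $\mathcal{G}_i$ is the union of the basic open bisections $[\mu, \nu, Z(s(\mu))]$ as $(\mu, \nu)$ ranges over pairs with $|\mu| = |\nu| \leq i$. Since products and inverses of open bisections in an ample Hausdorff groupoid are again open bisections, the subgroupoid $\mathcal{G}_i$ they generate is itself a union of open bisections and hence open; in particular the inclusions $\mathcal{G}_i \subseteq \mathcal{G}_{i+1} \subseteq \mathcal{G}$ are inclusions of open subgroupoids.

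Next I would verify that $\bigcup_i \mathcal{G}_i = \mathcal{G}$. The key combinatorial input is a length-grading property of $\Lambda$: every edge, whether $\alpha$-, $\beta$-, or $\gamma$-type, decreases the vertex index by exactly one, so any path $\mu \in v_1 \Lambda v_m$ necessarily has $|\mu| = m - 1$ regardless of its normal-form decomposition. Hence for any $g = [\mu, \nu, x] \in \mathcal{G}$ with common source $v_m$, we automatically have $|\mu| = |\nu| = m - 1$, so $g$ already appears in the generating family of $\mathcal{G}_{m-1}$. Since $m$ is finite, every element of $\mathcal{G}$ eventually lies in some $\mathcal{G}_i$.

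Combining the two observations, extension by zero gives compatible injective $*$-homomorphisms $C_c(\mathcal{G}_i) \hookrightarrow C_c(\mathcal{G}_{i+1}) \hookrightarrow C_c(\mathcal{G})$ that extend to isometric embeddings of the groupoid $C^*$-algebras because each $\mathcal{G}_i$ is open. The algebraic union $\bigcup_i C_c(\mathcal{G}_i)$ then equals $C_c(\mathcal{G})$ and is norm-dense in $C^*(\mathcal{G})$, yielding the identification $C^*(\mathcal{G}) \cong \lim_{\longrightarrow} C^*(\mathcal{G}_i)$ with the prescribed connecting maps. The main obstacle I anticipate is extracting the length-grading claim rigorously from the normal-form theorem of Section~2, and confirming that the condition $|\mu| = |\nu|$ is a well-defined property of the equivalence class $[\mu, \nu, x]$ — which follows because the defining equivalence $(\mu, \nu, \epsilon x) \sim (\mu \epsilon, \nu \epsilon, x)$ preserves the difference $|\mu| - |\nu|$. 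A secondary technical point is verifying that the full and reduced $C^*$-norms agree for these (AF-type, amenable) groupoids, so that the embeddings $C^*(\mathcal{G}_i) \hookrightarrow C^*(\mathcal{G})$ are genuinely isometric.
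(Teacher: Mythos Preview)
The paper does not prove this theorem itself; it is quoted as \cite[Theorem~5.1]{MS22} and used as background for the spectral triple construction, so there is no in-paper proof to compare against. Your outline is the standard argument and is correct: the $\mathcal{G}_i$ are open subgroupoids (as unions of products and inverses of basic compact-open bisections), they exhaust $\mathcal{G}$, and the general machinery for increasing unions of open subgroupoids of an ample Hausdorff groupoid gives the inductive limit identification.

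One small sharpening: you phrase ``$|\mu| = |\nu|$'' as a property whose well-definedness on equivalence classes needs checking, but in this particular $\Lambda$ the length-grading makes it automatic---\emph{every} representative $(\mu,\nu,x)$ of \emph{every} groupoid element satisfies $|\mu| = |\nu|$, since both paths run from $v_1$ to the same $v_m$ and each edge (of $\alpha$-, $\beta$-, or $\gamma$-type) shifts the vertex index by exactly one. So the generating set of $\mathcal{G}_i$ already contains every groupoid element admitting a representative with source in $\{v_1,\dots,v_{i+1}\}$, and exhaustion is immediate. The amenability concern you raise is legitimate in general but is handled in \cite{MS22}: the groupoid is AF, hence amenable, so full and reduced norms coincide and the inclusions are isometric.
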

The subalgebras $C^*(\mc{G}_i)$ are not finite-dimensional, which is why we cannot directly apply Christensen and Ivan's arguments to get a spectral triple on $C^*(\mc{G})$ using this inductive sequence. However, $\overline{\eta(C^*(\mc{G}_i))} \ominus \overline{\eta(C^*(\mc{G}_{i-1}))}$ can be decomposed into the direct sum of finite-dimensional subspaces, on each of which we can define our Dirac operator to be constant. The following result shows how this works in general.

\begin{Proposition}\label{prop the point of D}
    Let $H$ be a Hilbert space with finite-dimensional subspaces $H_n$ such that $H = \bigoplus_{n=1}^{\infty} H_n$. Denote by $\mathds{1}$ the identity operator on $H$.
    Let $(c_n)_{n=1}^{\infty}$ be a sequence with $1 \leq c_n < c_{n+1}$, and $c_n \to \infty$, and suppose that for any $k \in \N$ there exists some $M_k>0$ such that if $|i-j| < k$ then $|c_i - c_j| \leq M_k$. Define an unbounded operator $D$ on $H$ by $D|_{H_n} = c_n \mathds{1}$. Let $T \in B(H)$ have the property that there is some $K \in \N$ such that for all $n \in \N$, \[TH_n \sse \bigoplus_{i = n-K}^{n+K} H_i. \]
    Then $[D,T]$ extends to a bounded operator on $H$.
\end{Proposition}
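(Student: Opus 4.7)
The plan is to decompose $T$ into a finite sum of ``block-shift'' operators indexed by the offset $j$ between the source block $H_n$ and the target block $H_{n+j}$, and to exploit the fact that each such block shift has a particularly simple commutator with $D$. Let $P_n$ denote the orthogonal projection onto $H_n$. For $|j|\leq K$ set
\[
T_j \;=\; \sum_{n\geq 1} P_{n+j}\, T\, P_n
\]
(with the convention $P_i=0$ for $i\leq 0$). By the hypothesis $TH_n\subseteq \bigoplus_{i=n-K}^{n+K} H_i$, we have $P_m T P_n = 0$ whenever $|m-n|>K$, and therefore
\[
T \;=\; \sum_{|j|\leq K} T_j,
\]
the sum being finite.

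I would next verify that each $T_j$ is bounded with $\|T_j\|\leq \|T\|$. For $\xi\in H$, the vectors $\{P_{n+j}TP_n\xi\}_n$ lie in pairwise orthogonal subspaces, so
\[
\|T_j\xi\|^2 \;=\; \sum_n \|P_{n+j}TP_n\xi\|^2 \;\leq\; \sum_n \|TP_n\xi\|^2 \;\leq\; \|T\|^2\|\xi\|^2.
\]
This also shows that $T_j$ is the strong (indeed norm) sum, and the decomposition $T=\sum_{|j|\le K}T_j$ converges in norm.

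The key computation is the commutator of $D$ with a single $T_j$. On the algebraic direct sum $H_{\mathrm{alg}}:=\bigoplus_n H_n$, which is dense in $H$ and contained in $\mathrm{dom}(D)$, note that if $\xi\in H_n$ then $T_j\xi = P_{n+j}T\xi\in H_{n+j}\subseteq \mathrm{dom}(D)$, so $[D,T_j]\xi$ makes sense and
\[
[D,T_j]\xi \;=\; c_{n+j}\,T_j\xi - c_n\,T_j\xi \;=\; (c_{n+j}-c_n)\,T_j\xi.
\]
Since $|(n+j)-n|=|j|\leq K<K+1$, the hypothesis gives $|c_{n+j}-c_n|\leq M_{K+1}$. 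For an arbitrary $\xi\in H_{\mathrm{alg}}$ written as $\xi=\sum_n\xi_n$ with $\xi_n\in H_n$, the vectors $(c_{n+j}-c_n)T_j\xi_n$ lie in the orthogonal subspaces $H_{n+j}$, hence
\[
\|[D,T_j]\xi\|^2 \;=\; \sum_n |c_{n+j}-c_n|^2\,\|T_j\xi_n\|^2 \;\leq\; M_{K+1}^2\,\|T_j\xi\|^2 \;\leq\; M_{K+1}^2\,\|T\|^2\,\|\xi\|^2.
\]
Thus $[D,T_j]$ extends from $H_{\mathrm{alg}}$ to a bounded operator of norm at most $M_{K+1}\|T\|$. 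Summing over the $2K+1$ values of $j$ with $|j|\leq K$ yields
\[
\bigl\| [D,T] \bigr\| \;\leq\; (2K+1)\,M_{K+1}\,\|T\|,
\]
completing the proof.

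The only real subtlety is the book-keeping for the domain: $D$ is unbounded, so one must observe that $H_{\mathrm{alg}}$ is a common invariant core on which both $DT$ and $TD$ are defined, and that the orthogonality of the target blocks $H_{n+j}$ makes the norm estimate a direct Pythagorean sum rather than requiring a triangle-inequality argument. Beyond this, the proof is a short orthogonality calculation.
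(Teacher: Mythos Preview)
Your proof is correct. The paper argues directly: for a unit vector $\xi\in H_n$ it expands $T\xi=\sum_{i=n-K}^{n+K}a_i\xi_i$, computes $[D,T]\xi=\sum_i a_i(c_i-c_n)\xi_i$, and applies Pythagoras to obtain $\|[D,T]\xi\|\le M\|T\|$, then extends by density. You instead decompose $T=\sum_{|j|\le K}T_j$ into block-shift operators and bound each $[D,T_j]$ separately. The underlying computation is the same orthogonality estimate; your packaging is a little more explicit about why the bound persists for general $\xi$ in the algebraic direct sum (a step the paper passes over quickly), at the cost of a harmless factor of $(2K+1)$ in the final norm bound. You also correctly use $M_{K+1}$ rather than $M_K$, since the hypothesis requires $|i-j|<k$ strictly.
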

\begin{proof}
    Let $n \in \N$ be arbitrary, and let $\xi \in H_n$ be a unit vector. Then $T\xi = \sum_{i=n-K}^{n+K} a_i \xi_i,$ for some scalars $a_i$, and unit vectors $\xi_i$ with $\xi_i \in H_i$ for each $i$. Then $DT \xi = \sum_{i=n-K}^{n+K} a_i c_i \xi_i.$ And $TD\xi = T c_n \xi = \sum_{i=n-K}^{n+K} a_i c_n \xi_i$. Hence,
    \[
    DT\xi - TD \xi = \sum_{i=n-K}^{n+K} a_i (c_i - c_n) \xi_i
    \]
    so
    \[
    \|DT\xi - TD \xi\|^2 = \sum_{i=n-K}^{n+K} |a_i|^2 |c_i - c_n|^2 \|\xi_i\|^2
    \]
    Let $M = M_K$ as in the statement of the proposition. Then $|c_i - c_n| \leq M$ for $i$ between $n-K$ and $n+K$, thus
    \begin{align*}
        \|(DT - TD) \xi\|^2 &\leq \sum_{i=n-K}^{n+K} |a_i|^2 M^2 \|\xi_i\|^2\\
        &= M^2 \|T \xi\|^2\\
        &\leq M^2 \|T\|^2.
    \end{align*}
    Since $n$ was arbitrary, and $M$ does not depend on $n$ (it depends on $K$, which depends on $T$), this holds for $\xi \in \bigoplus_{i=1}^n H_i$, and the increasing union of these finite direct sums is dense in $H$. So, the commutator $[D,T]$ is bounded on a dense subspace of $H$, and thus extends to a bounded operator on $H$.
\end{proof}

In the following three subsections, we assemble the pieces required to apply Proposition \ref{prop the point of D} to $C^*(\mc{G})$ and the GNS Hilbert space $H$ in order to define an operator $D$ so that $(C^*(\mc{G}), H, D)$ is a spectral triple. 
First we go through the process of decomposing $H$ into finite-dimensional subspaces $H_{j,\ell,n}$ determined by the structure of the category of paths and its groupoid. We use families of partitions of the basic compact open sets for this purpose.
In Subsection 3.3, we define a subset $B_0$ of $A$ which generates $A$ as a $C^*$-algebra, and prove the main result:
\begin{Theorem*}[Theorem \ref{thm total bandwidth}]
    For all $a \in B_0$, there exists $j, m \ge 1$ such that if $j' > j$ and $\ell, n \ge m$, then \begin{equation*}
        a \cdot H_{j', \ell, n} \sse \bigoplus_{q=\ell-m}^{\ell+m} \bigoplus_{t = n-m}^{n+m} H_{j', q, t}.
    \end{equation*}
\end{Theorem*}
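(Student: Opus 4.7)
The plan is to reduce the bandwidth estimate to the action of a single generator. Since $B_0$ is specified to generate $A = C^*(\mathcal{G})$ and the property of mapping $H_{j',\ell,n}$ into a bounded ``box'' around $(j',\ell,n)$ is preserved (up to at most doubling the box size) by linear combinations, adjoints, and products, it suffices to fix a single $a \in B_0$ and exhibit constants $j, m$ depending only on $a$. Based on the groupoid construction, such a generator should be a characteristic function $\chi_{[\mu,\nu,F]}$ with $\mu, \nu \in v_1 \Lambda v_{m_0}$ for some $m_0$ and $F \in \mathcal{B}_{m_0}$, so the proof reduces to analyzing how convolution with a single $\chi_{[\mu,\nu,F]}$ acts on a basis vector $\eta(\chi_{[\delta,\epsilon,E]})$ of $H_{j',\ell,n}$.

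The next step is the explicit convolution computation. By the groupoid multiplication rule, $\chi_{[\mu,\nu,F]} * \chi_{[\delta,\epsilon,E]}$ is nonzero only when $\nu$ and $\delta$ meet along a common prefix (i.e., either $\nu = \delta\delta'$ or $\delta = \nu\nu'$ up to the equivalence $\sim$), and in that case the result is again a sum of characteristic functions $\chi_{[\mu',\epsilon',E']}$ whose data is obtained by forming the paths $\mu\delta'$ or $\mu\nu'^{-1}\delta$ and rewriting in normal form. I would then set $j := \max(|\mu|,|\nu|)$: once $j' > j$, the extra suffix of $\delta$ (or $\epsilon$) beyond $\mu\nu$ survives intact in the output, so the outer groupoid-filtration index $j'$ is preserved by the action of $a$.

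To control the inner indices $(\ell, n)$, I would set $m$ to be a quantity bounding both the total length $|\mu|+|\nu|$ and the total number of $\alpha$- and $\beta$-edges occurring in the normal forms of $\mu$ and $\nu$. The $\Lambda_1$-degree of the output path can differ from that of the input path only through $\Lambda_1$-content contributed (or cancelled) by $\mu$ and $\nu$; since $\alpha, \beta$ commute freely within a $\Lambda_1$-block but cannot pass through the ``closed gate'' $\gamma$-edges, the change in $\Lambda_1$-degree is at most the $\Lambda_1$-content of $\mu$ and $\nu$ taken together. This yields exactly the required $m$-neighborhood conclusion.

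The main obstacle is the careful bookkeeping in the normal form rewrite: one must verify that when $\mu$ or $\nu$ ends in a $\Lambda_1$-block that meets the initial $\Lambda_1$-block of $\delta$ (or $\epsilon$) across a shared vertex, the resulting block has $\alpha$- and $\beta$-exponents that shift the index $(\ell,n)$ by a bounded amount, while in the case where a $\Lambda_2$-boundary intervenes, the shift is zero on the ``far'' side of the gate. Once this case analysis is completed and the uniform bound $m$ is established, Proposition \ref{prop the point of D} applies to $T = a$ with parameter $2m$ (to accommodate both the $\ell$- and $n$-shifts after doubling from the double index), yielding that $[D, a]$ extends to a bounded operator on $H$, which combined with the finite-dimensionality of each $H_{j',\ell,n}$ then completes the verification that $(C^*(\mathcal{G}), H, D)$ is a spectral triple.
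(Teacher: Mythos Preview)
Your proposal has a genuine gap: you treat both indices $\ell$ and $n$ as if they record path-length or $\Lambda_1$-degree data, but they do not play the same role. The index $\ell$ indeed measures the length of the $\Lambda_1$-tail of $(\delta,\epsilon)$ (since $|\delta|=|\epsilon|=p_{j'}+\ell$), and your ``$\Lambda_1$-content'' argument is morally the right idea for bounding the $\ell$-shift. But the index $n$ is \emph{not} a path-length parameter at all: it is the refinement level of the partition $Q_n^{m'}$ of the unit space $Z_{m'}$, and the subspace $K_{\Delta'}(E)_{n+1}$ is built from the cells of $Q_{n+1}^{m'}$ sitting inside a cell $E\in Q_n^{m'}$, subject to an orthogonality constraint against $\eta_{\Delta',E}$. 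Nothing in your outline explains why acting by $a_\Delta$ moves a cell $E\in Q_n^{m'}$ into (or is refined by) cells of $Q_{n\pm m}^{m''}$.

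In the paper this is exactly where the bulk of the work lies. Lemma~\ref{lem a acts on vector 2} computes that the third coordinate becomes $\alpha^r E$ or $\sigma^{\beta^s}(E)$, and then Propositions~\ref{prop E' refines (c=beta)}--\ref{prop E' refines (c=alpha)} and \ref{prop F is refined by (c=beta)}--\ref{prop F is refined by (c=alpha)} carry out an exhaustive case analysis over $P^{(1)}_{m',n},\dots,P^{(4)}_{m',n}$ to show that these transformed sets refine $Q_{n-m}^{m''}$ and are refined by $Q_{n+m}^{m''}$. Equally important, your sketch ignores the orthogonality built into $K_{\Delta'}(E)_{n+1}$: one must check that the image of a vector orthogonal to $\eta_{\Delta',E}$ remains orthogonal to the appropriate $\eta_{\Delta'',G}$'s, which is the content of Lemmas~\ref{lem if E' refines (c=beta)}--\ref{lem if E' refines (c=alpha)} and Corollaries~\ref{coro if E' refines} and \ref{coro if F is refined by 3}. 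Without these two ingredients the ``bookkeeping in the normal form rewrite'' you allude to cannot be completed, and the $n$-bandwidth bound does not follow. (Also note: elements of $B_0$ have third coordinate a full cylinder $Z(s(\nu d^s))$, not a general $F\in\mathcal{B}_{m_0}$; and the theorem is already stated for a single $a\in B_0$, so the closure-under-products discussion is unnecessary here.)
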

The proof of this takes place in several stages, which we point out along the way. 

In the final subsection, we define an unbounded operator $D$ and prove that we have successfully constructed a spectral triple.

\subsection{Subspaces} 
For $i \geq 0$, let $\Phi_i = \{\mu \in v_1 \Lambda : 1 \le |\mu| \leq i, \mu_{|\mu|} \in \Lambda_2^1\} \cup \{v_1\}.$ Recall the sequence $(k_i)_{i=1}^{\infty}$ where $k_i$ is the number of edges in $\Lambda_2$ from $v_{i+1}$ to $v_i$. Let $(k_{p_j})_{j=1}^{\infty}$ be the subsequence of $(k_i)_{i=1}^{\infty}$ consisting of all the nonzero terms. Let $k_0 = 0$, and $p_0 = 0$, so that $k_{p_j} = 0 \Leftrightarrow p_j = 0 \Leftrightarrow j=0$. Note that for $p_j \le i < p_{j+1}$,  $\Phi_i = \Phi_{p_j}$. 

Mitscher and Spielberg prove that $\mc{G}_i = \{[\mu\th, \mu'\th', x] \in \mc{G} : |\mu| = |\mu'| \le i$ and $\th, \th' \in \La_1\}$ \cite[Theorem 3.18]{MS22}. We further recharacterize $\mc{G}_i$ in the following:

\begin{Theorem}\label{thm new G_i desc}
    For $j > 1$ and for all $i$ with $p_j \le i < p_{j+1}$, $$\mc{G}_i = \{ [\mu\eta, \nu \th, x] \in \mc{G} : \mu, \nu \in \Phi_{p_j}, \eta, \th \in \La_1, \text{ and } \mf{d}(\eta) \perp \mf{d}(\th) \}.$$
\end{Theorem}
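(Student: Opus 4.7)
The plan is to prove both inclusions, using the normal-form decomposition for paths in $\La$ (\cite[Lemma 3.3]{MS22}), the commutativity of edges in $\La_1$ (captured by the degree functor $\mf{d}$), and the equivalence relation $(\mu, \nu, \ep x) \sim (\mu \ep, \nu \ep, x)$ defining $\mc{G}$.

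For the containment $\subseteq$, begin with a representative $[\mu' \th, \mu'' \th', x]$ of an element of $\mc{G}_i$ in Mitscher--Spielberg's form ($|\mu'| = |\mu''| \le i$, $\th, \th' \in \La_1$). Decompose $\mu'$ in normal form as $\mu' = \tau_1 \cdots \tau_m$: if $\tau_m \in \La_1$, absorb it into $\th$ by setting $\mu = \tau_1 \cdots \tau_{m-1}$ and $\eta_0 = \tau_m \th$; otherwise (or when $\mu' = v_1$) set $\mu = \mu'$ and $\eta_0 = \th$. In either case $\mu$ equals $v_1$ or ends with a $\La_2$-edge, so $|\mu|$ is $0$ or a position $p_k$; combined with $|\mu| \le |\mu'| \le i < p_{j+1}$, this forces $|\mu| \le p_j$ and thus $\mu \in \Phi_{p_j}$. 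Apply the analogous decomposition to write $\mu''\th' = \nu \xi_0$. To achieve perpendicularity, write $\mf{d}(\eta_0) = (a,b)$ and $\mf{d}(\xi_0) = (c,d)$ and choose $\rho \in \La_1$ with $\mf{d}(\rho) = (\min(a,c), \min(b,d))$ having the appropriate source and range vertices (possible because $\alpha$- and $\beta$-edges commute in $\La_1$). Factoring $\eta_0 = \eta \rho$ and $\xi_0 = \xi \rho$, the equivalence yields
\[
[\mu\eta_0, \nu\xi_0, x] = [\mu\eta\rho, \nu\xi\rho, x] \sim [\mu\eta, \nu\xi, \rho x].
\]
Since $\min(a,c) \in \{a,c\}$ (and similarly for $b,d$), one of $\mf{d}(\eta), \mf{d}(\xi)$ has a zero in each coordinate, forcing $\mf{d}(\eta) \perp \mf{d}(\xi)$.

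For the reverse containment $\supseteq$, take $[\mu\eta, \nu\xi, x]$ of the specified form and set $L = \max(|\mu|, |\nu|)$; since $\mu, \nu \in \Phi_{p_j}$, we have $L \le p_j \le i$. The identity $|\mu| + |\eta| = |\nu| + |\xi|$ (forced by $s(\mu\eta) = s(\nu\xi) = r(x)$, since each edge drops the vertex index by one) gives $L - |\mu| \le |\eta|$ and $L - |\nu| \le |\xi|$. Using commutativity of $\La_1$, factor $\eta = \eta_0 \tilde\th$ and $\xi = \xi_0 \tilde\th'$ with $|\eta_0| = L - |\mu|$, $|\xi_0| = L - |\nu|$, and $\tilde\th, \tilde\th' \in \La_1$. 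Then $[\mu\eta, \nu\xi, x] = [(\mu\eta_0)\tilde\th, (\nu\xi_0)\tilde\th', x]$ has the Mitscher--Spielberg form with first-two-coordinate length $L \le i$.

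The main obstacle is navigating the equivalence relation together with the commutativity of $\La_1$ in a precise way: identifying the correct common suffix $\rho$ in the forward direction and the correct length-$L$ extensions in the reverse direction both depend on the free commutation of $\alpha$- and $\beta$-edges, along with careful bookkeeping of source and range vertices to ensure that $\rho x$ remains a valid infinite path and that the constructed $\mu', \mu''$ actually compose with the tails.
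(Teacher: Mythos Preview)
Your proof is correct and follows essentially the same strategy as the paper's: both directions use the Mitscher--Spielberg characterization $\mc{G}_i = \{[\mu\th, \mu'\th', x] : |\mu| = |\mu'| \le i,\ \th,\th' \in \La_1\}$, strip off the maximal $\La_1$-tail to land in $\Phi_{p_j}$ (your normal-form decomposition is equivalent to the paper's ``last $\La_2$-edge position'' argument, and both implicitly use that $\La_2$-edges occur only at levels $p_k$), and then cancel the common $\La_1$-suffix $\rho = \al^{\min(a,c)}\be^{\min(b,d)}$ to achieve perpendicularity. Your phrasing ``one of $\mf{d}(\eta), \mf{d}(\xi)$ has a zero in each coordinate'' should really read ``in each coordinate, at least one of them is zero,'' but the conclusion is unaffected.
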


\begin{proof}
    Let $S$ denote the set on the righthand side of the equation. We begin by showing that  $S \sse \mc{G}_i$. Let $[\mu \eta, \nu \th, x] \in S$, and suppose without loss of generality that $|\mu| \le |\nu|$. Write $\eta = \eta' \eta''$ such that $|\eta'| = |\nu| - |\mu|$. Let $\mu' = \mu \eta'$. Then $|\mu'| = |\nu|,$ $\eta' \in \La_1$, and $[\mu' \eta'', \nu \th, x] = [\mu \eta, \nu\th, x]$.
    If $|\th| = 0$, then $\mf{d}(\th) = 0$ and so $\mf{d}(\eta'') \perp \mf{d}(\th)$. Otherwise, $|\eta| \ge |\th| > 0$, so since $\mf{d}(\eta) \perp \mf{d}(\th),$ it follows that $\eta = c^r, \th = d^s$, where $\{c, d\} = \{\al, \be\}$. Hence $\eta'' = c^{r'}$, so $\mf{d}(\eta'') \perp \mf{d}(\th)$.

    Now to show the reverse containment, we take $[\mu \eta, \nu \th, x] \in \mc{G}_i$, and, referring to \cite[Theorem 3.18]{MS22}, we can assume that $\eta, \th \in \La_1$.
    Let $m = \max\{j : \mu_j \in \La_2^1\}$, $n = \max\{j: \nu_j \in \La_2^1\}$. Write $\mu \eta = \mu' \eta'$ and $\nu \th = \nu' \th'$ so that $|\mu'| = m$ and $|\nu'| = n$. Then $\mu'_{|\mu'|}, \nu'_{|\nu'|} \in \La_2^1$, and since $|\mu| = |\nu| \le i < p_{j+1}$, it follows that $m, n \le p_j$. Thus $\mu', \nu' \in \Phi_{p_j}$.

    It also follows that $\eta', \th' \in \La_1$. If $\mf{d}(\eta') \perp \mf{d}(\th')$, then $[\mu' \eta', \nu' \th', x] \in S$ as desired. Otherwise, write $\eta' = \al^p\be^q$ and $\th' = \al^r \be^s$, let $a = \min\{p,r\}, \ b=\min\{q, s\}$. Then at least one of $(p-a), (r-a) = 0$, and at least one of $(q-b), (s-b) = 0$, so $\mf{d}(\al^{p-a} \be^{q-b}) \perp \mf{d}(\al^{r-a} \be^{s-b})$, and $[\mu' \eta', \nu' \th', x] = [\mu'\al^{p-a} \be^{q-b}, \nu'\al^{r-a} \be^{s-b}, \al^a \be^b x]$, which is in $S$.
\end{proof}

\begin{Definition} Define the following:
\begin{align*}
    \text{for  $j\geq 0$: }& S_j = \{(\mu, \nu) \in \Phi_{p_j} \times \Phi_{p_j} : \text{ at least one of } |\mu|, |\nu| = p_j \}\\
    \text{for  $j\geq 0:$ }& S_{j,0} = \{(\mu\eta, \nu\th) : (\mu, \nu) \in S_j, |\mu\eta| =|\nu\th| = p_j,\ \eta, \th \in \La_1,\\ & \text{ and } \mu\eta, \nu\theta \text{ have no common suffix} \} \\
    \text{for  $j\geq 1,\ell \geq 1$: }& S_{j,\ell} = \{(\mu\eta, \nu\th) : (\mu, \nu) \in S_j,\ |\mu\eta| = |\nu\th| = p_j + \ell,\\
    &\ \eta, \th \in \La_1,\ \mf{d}(\eta)\perp \mf{d}(\th)\}.
\end{align*}
\end{Definition}

\begin{Remark}\label{remark S_jl}
We point out three things about $S_{j,\ell}$: \begin{enumerate}[(1)]
    \item\label{remark S_jl 1} $S_{0, \ell} = \{(\al^{\ell}, \be^{\ell}), (\be^{\ell}, \al^{\ell})\}$.

    \item\label{remark S_jl 2} For $j > 0$ and $(\mu \eta, \nu \th) \in S_{j,0}$, at least one of $\eta, \th$ is trivial. Both are trivial if and only if $\mu, \nu \in \Phi_{p_j}$, and in this case, $\mu_{p_j} \neq \nu_{p_j}$.
    
    \item\label{remark S_jl 3} When $\ell \ge 1$ and $(\mu\eta, \nu\th) \in S_{j,\ell}$, $|\eta|, |\th| \ge 1$, so $\mf{d}(\eta) \perp \mf{d}(\th)$ implies that $\mu\eta$ and $\nu\th$ have no common suffix. Hence, for all $j, \ell \ge 0$, if $(\de, \ep) \in S_{j,\ell}$ then $\de, \ep$ have no common nontrivial suffix.
\end{enumerate}
\end{Remark}

\begin{Notation}
    Let $m(j,\ell) = p_j + \ell + 1$. Thus if $(\delta, \ep) $ with $|\de| = |\ep| = p_j + \ell,$  then $|\delta| = |\ep| = p_j + \ell = m(j,\ell)-1$, and so $s(\delta) = s(\ep) = v_{m(j,\ell)}$.
\end{Notation}

Now let $A = C^*(\mc{G})$, and  $A_j = C^*(\mc{G}_{p_j})$. Then $A_1 \sse A_2 \sse \cdots \sse A$, and $A$ is the inductive limit of $(A_j)_{j=1}^{\infty}$, by \cite[Theorem 5.1]{MS22}.
We let $H_1 = \overline{\eta(A_{1})},$ and for $j \ge 2$, $H_j = \overline{\eta(A_{j})} \ominus \overline{\eta(A_{j -1})}$.

\begin{Theorem}\label{thm H_j} ~
\begin{enumerate}
    \item\label{thm H_j 1} $H_1 = \cspn\{ \eta(\chi_{[\de, \ep, E]}) : (\de, \ep) \in S_{0,\ell} \cup S_{1, \ell}$ for some $\ell \ge 0, E \sse Z(s(\ep))$ compact open$\}$.

    \item\label{thm H_j 2} For $j \ge 2$,
    $H_j = \cspn\{\eta(\chi_{[\de, \ep, E]}) : (\de, \ep) \in S_{j,\ell}$ for some $\ell \ge 0, E \sse Z_{m(j,\ell)} $ compact open$\}.$
\end{enumerate}
\end{Theorem}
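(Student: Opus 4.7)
The plan is to prove both inclusions in each part by combining Theorem \ref{thm new G_i desc} (which describes $\mc{G}_{p_j}$ and thus identifies a generating set for $A_j = C^*(\mc{G}_{p_j})$), the inner product formula $(\ref{inner-product-formula})$ (to establish orthogonality with $\overline{\eta(A_{j-1})}$), and the groupoid equivalence $(\mu, \nu, \ep x) \sim (\mu\ep, \nu\ep, x)$ (to reduce arbitrary generators to canonical form by stripping common suffixes).

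For the inclusion $\supseteq$ I first observe that for $(\de, \ep) \in S_{j, \ell}$ with $j \ge 1$ and compact open $E \sse Z_{m(j,\ell)}$, Theorem \ref{thm new G_i desc} places $[\de, \ep, E]$ inside $\mc{G}_{p_j}$ (the $\ell = 0$ case uses that at least one of the $\La_1$-tails is trivial so the $\mf{d}$-orthogonality is automatic), while for $(\de, \ep) \in S_{0, \ell}$ in Part \ref{thm H_j 1}, Remark \ref{remark S_jl}(1) gives pure $\al$/$\be$-powers with orthogonal degrees, again landing in $\mc{G}_{p_1}$. Thus $\eta(\chi_{[\de, \ep, E]}) \in \overline{\eta(A_j)}$. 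For Part \ref{thm H_j 2} with $j \ge 2$ I must additionally verify orthogonality to $\overline{\eta(A_{j-1})}$: by $(\ref{inner-product-formula})$, the inner product against $\eta(\chi_{[\de', \ep', E']})$ with $(\de', \ep') \in S_{j', \ell'}$ for $j' < j$ equals $\mf{m}([\ep', \de', E'] \cdot [\de, \ep, E] \cap \mc{G}^{(0)})$; unwinding composability $\de' x' = \de y$ and the unit condition via left- and right-cancellation forces $(\de, \ep) = (\de'\nu, \ep'\nu)$ or $(\de', \ep') = (\de\nu, \ep\nu)$ for some $\nu \in \La$. Remark \ref{remark S_jl}(3) then forces $\nu$ to be trivial in either case, so $(\de, \ep) = (\de', \ep')$. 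Since the $(J, L)$-index of a pair with no nontrivial common suffix is uniquely determined by $|\de|$ and the maximal $\La_2^1$-ending prefixes of $\de, \ep$, this forces $(j, \ell) = (j', \ell')$, contradicting $j' < j$, and the inner product vanishes.

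For the inclusion $\subseteq$, Theorem \ref{thm new G_i desc} gives that $A_j$ is densely spanned by $\chi_{[\mu\eta, \nu\th, F]}$ with $\mu, \nu \in \Phi_{p_j}$, $\eta, \th \in \La_1$, $\mf{d}(\eta) \perp \mf{d}(\th)$. Given such a generator, if $(\mu\eta, \nu\th)$ admits a nontrivial common suffix $\ze$, writing $\mu\eta = \de\ze$ and $\nu\th = \ep\ze$ and applying the equivalence yields $\chi_{[\mu\eta, \nu\th, F]} = \chi_{[\de, \ep, \ze F]}$; iterating to remove the maximal common suffix and then decomposing each component into its maximal $\La_2^1$-ending prefix plus a $\La_1$-tail places the reduced pair in $S_{J, \ell}$ for some $J \le j$ (suffix removal never creates new $\La_2^1$-edges). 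Combined with the orthogonality established above, this identifies $H_j$ for $j \ge 2$ and $H_1 = \overline{\eta(A_1)}$ with the closed spans in the statement.

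The main obstacle is the orthogonality analysis, which requires carefully extracting the precise cancellation relation from groupoid composition and the unit condition; the no-common-suffix property in Remark \ref{remark S_jl}(3) is tailor-made for this step, but one must be careful to handle both cases $|\de| \le |\de'|$ and $|\de| \ge |\de'|$. A secondary technical point is verifying that the maximally suffix-reduced pair satisfies the definitional constraints of $S_{J, \ell}$, which follows because any shared $\al$ or $\be$ in the $\La_1$-tails would itself produce a common suffix by commutativity of $\La_1$-edges, and $\La_2^1$-edges appear only at the special levels $p_j$.
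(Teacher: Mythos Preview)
Your overall strategy matches the paper's: establish $\overline{L_j} \subseteq \overline{\eta(A_j)}$, prove $L_j \perp \eta(A_{j-1})$ via the inner-product formula, and then split generators of $\eta(A_j)$ according to whether one of $|\mu|,|\nu|$ equals $p_j$. The containment arguments are fine.

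The gap is in your orthogonality step. You assert that from the composability relation $\de' x' = \de y$ and the unit condition, ``left- and right-cancellation forces $(\de,\ep) = (\de'\nu, \ep'\nu)$ or $(\de',\ep') = (\de\nu, \ep\nu)$.'' This is not a consequence of cancellation alone. In $\La$, two paths can meet without one extending the other: for instance $\mu\al^a$ and $\mu\al^b\be^c$ (with $b<a$) have the common extension $\mu\al^a\be^c$, yet neither extends the other. Concretely, after reducing the product to $[\ep'\rho, \ep\sigma, z]$ with $\de'\rho = \de\sigma$, the unit condition gives $\ep'\rho = \ep\sigma$ with the \emph{same} $\rho,\sigma$; but these two equations do not, by cancellation alone, force $\rho$ or $\sigma$ to be trivial or force an extension relation between $\de$ and $\de'$.

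The paper's proof confronts exactly this obstruction. Writing $\de' = \phi_1\cdots\phi_p$ and $\de = \psi_1\cdots\psi_q$ in normal form and invoking \cite[Lemma~3.4]{MS22}, it splits into cases on $p$ versus $q$ and on whether the terminal pieces lie in $\La_1$ or $\La_2$. The case $\mu' = \mu$ (where $\de = \mu\eta$, $\de' = \mu\eta'$ with $\eta,\eta'\in\La_1$ meeting but neither extending the other) is ruled out not by your suffix argument but by tracking the $\La_2^1$-ending of $\nu$ through the equation $\nu'\th'\ze' = \nu\th\ze$ to force $\nu' = \nu$, contradicting $|\nu'| \le p_{j'} < p_j = |\nu|$. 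Only after this is excluded does the paper obtain that $\mu$ properly extends $\de'$, which then yields your desired common-suffix form --- and even there the final contradiction comes from identifying $\th = \eta$ and then $\mu = \de'\ze$, $\nu = \nu'\th'\ze$, exhibiting the forbidden common suffix $\ze$. Your proposal skips this structural analysis, and without it the argument does not go through.
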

\begin{proof}
It follows immediately from Theorem \ref{thm new G_i desc} that $\mc{G}_{p_j} = \{ [\de, \ep, x] \in \mc{G} : (\de, \ep) \in S_{h, \ell}$ for some $h \le j, \ell \ge 0\}$. Thus (\ref{thm H_j 1}) holds.

Now assume $j \ge 2$.
Let $L_j = \spn\{\eta(\chi_{[\de, \ep, E]}) : (\de, \ep) \in S_{j,\ell}$ for some $ \ell \ge 0, E \sse Z_{m(j,\ell)} $ compact open$\}.$ 
    Then $L_j \sse \eta(C^*(\mc{G}_{p_j})) = \eta(A_{j})$, again by Theorem \ref{thm new G_i desc}.
    

    Next we show that $L_j \perp \eta(A_{j-1})$, which implies $\overline{L_j} \perp \overline{\eta(A_{j-1})}$, and so $\overline{L_j} \sse H_j$. 
    Let $(\de, \ep) = (\mu\eta, \nu\th) \in S_{j,\ell}$ for some $\ell \ge 0$ with $(\mu, \nu) \in S_{j}$. Let $E \sse Z_{m(j,\ell)}$ be compact open, so that $\eta(\chi_{[\de, \ep, E]}) \in L_j$. 

    The collection $\{\chi_{F} : F \sse \mc{G}_{p_{j-1}}$ is a compact open bisection$\}$ is a total set in $A_{j-1}$, and every compact open bisection in $\mc{G}_{p_{j-1}}$ is a finite disjoint union of sets of the form $[\de', \ep', E'] \sse \mc{G}_{p_{j-1}}$ where $E' \sse Z(s(\ep'))$ is compact open in $\mc{G}^{(0)}$. Thus, it suffices to show that $\eta(\chi_{[\de, \ep, E]}) \perp \eta(\chi_{[\de', \ep', E']})$ for some $(\de', \ep') = (\mu'\eta', \nu'\th') \in S_{j', \ell'}$  with $(\mu', \nu') \in S_{j'}$, where $j' \le j-1, \ell' \ge 0,$  and some compact open $E' \sse Z_{m(j',\ell')}$.

    By Equation \eqref{inner-product-formula},
    \begin{align*}
        \langle \eta(\chi_{[\de, \ep, E]}), \eta(\chi_{[\de', \ep', E']}) \rangle &= \mf{m}([\ep', \de', E'][\de, \ep, E] \cap \mc{G}^{(0)}).
    \end{align*}
    Let $Y = [\ep', \de', E'][\de, \ep, E] \sse \mc{G}$. If $Y \cap \mc{G}^{(0)} = \emp$ then $\mf{m}(Y \cap \emp) = 0$, so we shall assume for a contradiction that $Y \cap \mc{G}^{(0)} \neq \emp$. Thus $\de' \Cap \de$.

    If $\de' = \de$ then $Y =[\ep', \ep, E'\cap E]$. A set of the form $[\om_1, \om_2, F]$ can only intersect $\mc{G}^{(0)}$ if $\om_1 = \om_2$, so $\ep' = \ep$. But then $(\de, \ep) = (\de', \ep')$, so $\mu = \mu'$ and $\nu = \nu'$.. However, $|\mu'|, |\nu'| \le p_{j'} \le p_{j-1} < p_j$, and one of $|\mu|$ or $|\nu| = p_j$, so this is a contradiction.
    
    So, $\de' \Cap \de$ but $\de' \neq \de$. 
    Write $\de' = \phi_1 \cdots \phi_p$ and $\de = \psi_1 \cdots \psi_q$ in normal form. 
    Then by \cite[Theorem 3.4]{MS22}, $\phi_i = \psi_i$ for $i < \min\{p, q\}$.
    
    Note that $\mu'$ extends $\phi_1 \cdots \phi_{p-1}$, for either $\phi_p \in \La_1$ and so $\phi_p = \eta'$ and $\mu' = \phi_1 \cdots \phi_{p-1}$, or $\phi_p \in \La_2$, and so $\mu' = \de' = \phi_1 \cdots \phi_p$. By the same token, $\mu$ extends $\psi_1 \cdots \psi_{q-1}$.

    First, I claim that $|\mu'| < |\mu|$. For suppose $|\mu'| \ge |\mu|$. Then $|\mu| \leq p_{j'} < p_j$, which implies  that $|\eta| \ge 1$, so $\mu = \psi_1 \cdots \psi_{q-1}$ and $\eta = \psi_q$. Now, either $p > q$, or $p = q$.

    Suppose $p > q$. Then $\phi_q$ extends $\psi_q$ in $\La_1$, so $\mu'$ extends $\de$, and since $\psi_q \in \La_1$, $\mu'$ properly extends $\phi_1 \cdots \phi_q$, so $\mu'$ properly extends $\de$. Therefore write $\mu' = \de \ze$, $|\ze| \ge 1$. Then $\de' = \de \ze \eta'$, and 
    \begin{align*}
        Y &= [\ep', \de\ze\eta', E'][\de\ze\eta', \ep\ze\eta', \si^{\ze\eta'}(E)]\\
        &= [\ep', \ep\ze\eta', E\cap \si^{\ze\eta'}(E)].
    \end{align*}
    Then $\ep' = \nu' \th' = \ep \ze \eta'$. But then $\ze \eta'$ is a common suffix of $\nu'\th'$ and $\mu' \eta' = \de\ze\eta'$, contradicting the definition of $S_{j',\ell'}$.

    So, $p = q$, and $\phi_p = \eta'$, $\psi_p = \eta$ have a common extension in $\La_1$, say $\eta \ze = \eta' \ze'$ in $\La_1$. Since $\phi_p, \psi_p \in \La_1$, it follows that $\mu' = \phi_1 \cdots \phi_{p-1} = \psi_1 \cdots \psi_{p-1} = \mu$, so $\de = \mu \eta$ and $\de' = \mu \eta'$. Then
    \begin{align*}
        Y &= [\ep'\ze', \de'\ze', \si^{\ze'}(E')] [\de\ze, \ep\ze, \si^{\ze}(E)]\\
        &=  [\ep'\ze', \ep\ze, \si^{\ze'}(E')],
    \end{align*}
    so $\ep'\ze' = \ep \ze$, equivalently, $\nu' \th' \ze' = \nu \th \ze$. Since $|\mu| < p_j,$ $|\nu| = p_j$, and $\nu_{|\nu|} \in \La_2^1$. And since $\th' \ze', \th \ze \in \La_1$, $\nu' \th' \ze' = \nu \th \ze$ implies that $\nu' = \nu$. But $|\nu'| \le p_{j'} < p_j = |\nu|$, a contradiction.

    Therefore, $|\mu'| < |\mu|$. It then follows that $p \le q$. Moreover, we claim that $\mu$ extends $\de'$ nontrivially:

    \begin{enumerate}
        \item[\textbf{Case 1:}] Suppose $p=q$. Then $\phi_p, \psi_p$ have a common extension in $\La_j$ where $\phi_p, \psi_p \in \La_j$. 
    
    If $\phi_p \in \La_1$, then $\phi_p, \psi_p$ have a common externsion in $\La_1$. In particular, this means $\phi_p = \eta'$ and $\psi_p = \eta$. But then $\mu' = \phi_1 \cdots \phi_{p-1} = \psi_q \cdots \psi_{p-1} = \mu$, contradicting that $|\mu'| < |\mu|$.

    So, $\phi_p, \psi_p \in \La_2$. Then $\de = \mu$ and $\de' = \mu'$, and one of $\mu, \mu'$ extends the other. Since $|\mu'| < |\mu|,$ it follows that $\mu$ extends $\mu' = \de'$ nontrivially.

    \item[\textbf{Case 2:}] Suppose $p < q$. Then $\psi_p$ extends $\phi_p$, so $\mu$ extends $\de'$. Moreover, this is a proper extension: if $|\eta'| = 0$ then $\de' = \mu',$ and $\mu \neq \mu'$. If $|\eta'| > 0$, then $\de'$ ends with an edge from $\La_1$, while $\mu$ ends with an edge from $\La_2$, so $\mu \neq \de'$.
    \end{enumerate}
    This finishes the proof of the claim, so we may write $\mu = \de' \ze$, where $|\ze| \ge 1$. Then $\de = \de'\ze \eta$, and
    \begin{align*}
        Y &= [\ep' \ze \eta, \de' \zeta \eta, \si^{\ze \eta}(E')] [\de' \ze \eta, \nu \th, E]\\
        &= [\ep' \ze \eta, \nu \th, \si^{\ze \eta}(E') \cap E].
    \end{align*}
    So $\ep' \ze \eta = \nu \th$, ie, $\nu' \th' \ze \eta = \nu \th$. By assumption $\eta, \th \in \La_1$, and $\ze_{|\ze|} \in \La_2^1$. Hence $|\nu| \ge 1$, and hence $\nu_{|\nu|} \in \La_2$. In fact, $\ze_{|\ze|}$ is the last edge from $\La_2$ in $\nu' \th' \ze \eta$, and $\nu_{|\nu|}$ is the last edge from $\La_2$ in $\nu \th$, so $\nu = \nu' \th' \ze$, and $\th  = \eta$. But $\nu\th, \mu\eta$ cannot have a nontrivial common suffix, so this implies $|\th| = |\eta| = 0$, and $(\de, \ep) = (\mu, \nu)$. But $\mu = \de' \ze$ and $\nu = \nu'\th'\ze$, ie, $\mu,\nu$ have a nontrivial common suffix, contradicting the definition of $S_{j,\ell}$.

    Therefore, we conclude that $Y \cap \mc{G}^{(0)} = \emp$, and so $L_j \perp \eta(A_{j-1})$, as desired. This completes the proof that $\overline{L_j} \sse H_j$.

    To show that $H_j \sse \overline{L_j}$, consider $\xi = \eta(\chi_{[\mu\eta, \nu\th, E]})$, a typical element of a total set for $\eta(A_j)$. If one of $|\mu|, |\nu| = p_j$, then $\xi \perp \eta(A_{j-1})$ so $\xi \in H_j$. If both $|\mu|, |\nu| < p_j$, then $\xi \in \eta(A_{j-1})$ and so $\xi \perp H_j$.

    Now let $\xi \in H_j$ and $\eps > 0$ be arbitrary.
    Since $H_j \sse \eta(A_j)$, and $C := \spn\{\eta(\chi_F) : F = [\de, \ep, E] \sse \mc{G}_{p_j},$ compact open$\}$ is dense in $\eta(A_j)$, choose $\xi' \in C$ with $\|\xi - \xi'\| < \eps$. Write $\xi' = \sum_{i=1}^n b_i \eta(\chi_{F_i})$, where for each $i$, $F_i = [\mu_i \eta_i, \nu_i \th_i, E_i]$ with $(\mu_i \eta_i, \nu_i \th_i) \in S_{h_i, \ell_i}$ for some $h_i \le j, \ell_i \ge 0,$ and $E_i \sse Z_{m(h_i, \ell_i)}$ is compact open.
    We can then let $\mc{F}_1 = \{F_i :$ at least one of $|\mu_i|, |\nu_i| = p_j\}$ and $\mc{F}_2 = \{F_i :$ both $|\mu_i|, |\nu_i| < p_j\}$, and let $b_F = b_i$ if $F = F_i$. Then we may write $\xi' = \xi_1 + \xi_2$, where
    \begin{align*}
        \xi_1 &= \sum_{F \in \mc{F}_1} b_F \eta(\chi_F) \in L_j,\\
        \xi_2 &= \sum_{F \in \mc{F}_2} b_F \eta(\chi_F) \in \eta(A_{j-1}).
    \end{align*}
    Then $\xi_1 \in L_j \sse H_j$, and $\xi_2 \in \eta(A_{j-1}) \perp H_j$. Hence,
    \begin{align*}
        \eps^2 &> \|\xi - \xi'\|^2\\
        &= \|(\xi - \xi_1) - \xi_2\|^2\\
        &= \|\xi - \xi_1\|^2 + \|\xi_2\|^2,
    \end{align*}
    and so $\|\xi - \xi_1\| < \eps$. Thus $\xi \in \overline{L_j}$.
\end{proof}

Since $k_{p_j} < \infty$ for all $j \ge 1$, $\Phi_{p_j}$ is a finite set. For fixed $\ell \ge 0$, $S_{j,\ell}$ is also finite. For fixed $j, \ell$, there is a family of partitions $\{Q^{m(j,\ell)}_n\}_{n=-1}^{\infty}$ of $Z_{m(j,\ell)}$ each consisting of finitely many compact open sets, which we now describe. First, for $m \geq 1, n \geq 0$, define
\begin{align*}
    \Phi^m_n &= \{\mu \in v_m \La : 1 \le |\mu| \leq n, \mu_{|\mu|} \in \La_2^1\} \cup \{v_m\}.
\end{align*}
Note that $\Phi_0^m = \{v_m\}$, and $\Phi_n^m \sse \Phi_{n+1}^m$ for all $n \ge 0$. 

The following result will be useful later.

\begin{Proposition}\label{prop extension}
    If $\mu \in \Phi_{n_1}^m$ and $\nu \in \Phi_{n_2}^m$, and $\mu \Cap \nu$, then one of $\mu, \nu$ extends the other.
\end{Proposition}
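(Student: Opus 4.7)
The plan is to use the normal-form decomposition from \cite[Lemma 3.3]{MS22} together with the meeting criterion \cite[Theorem 3.4]{MS22}. First I would dispense with the trivial case: if $\mu = v_m$ (respectively $\nu = v_m$), then $\nu$ extends $\mu$ (respectively $\mu$ extends $\nu$) automatically. So assume both are nontrivial, which by the definition of $\Phi_{n_j}^m$ forces the last edge of each to lie in $\La_2^1$.

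Writing $\mu = \phi_1 \cdots \phi_p$ and $\nu = \psi_1 \cdots \psi_q$ in normal form with $\phi_p, \psi_q \in \La_2$, and assuming without loss of generality $p \le q$, the Mitscher--Spielberg theorem gives $\phi_i = \psi_i$ for $i < p$. The next step is to argue that $\phi_p$ and $\psi_p$ both lie in $\La_2$: for $p \ge 2$ this is automatic from the alternation of normal-form blocks (since $\phi_{p-1} = \psi_{p-1} \in \La_1$ forces $\psi_p \in \La_2$), and for $p = 1$ I would invoke the ``closed gate'' property---a small lemma that two paths whose initial edges come from different subcategories ($\La_1$ versus $\La_2$) cannot share a common extension, since the sequence of $\La_2$-edges in a word is invariant under the only commutation relations of $\La$ (which live entirely inside $\La_1$).

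With $\phi_p, \psi_p \in \La_2$ starting at the same vertex and sharing a common extension in $\La$, the same invariance of the $\La_2$-edge sequence forces one of $\phi_p, \psi_p$ to be a prefix of the other in $\La_2$: either $\phi_p = \psi_p \tau$ or $\psi_p = \phi_p \tau$ with $\tau \in \La_2$ (possibly trivial). The conclusion then follows by splitting on $p = q$ versus $p < q$. When $p = q$, the prefix comparison of $\phi_p$ and $\psi_p$ transfers directly to $\mu$ and $\nu$ via the common prefix $\phi_1 \cdots \phi_{p-1} = \psi_1 \cdots \psi_{p-1}$. When $p < q$, the case $\psi_p = \phi_p \tau$ gives $\nu = \mu \tau \psi_{p+1} \cdots \psi_q$, so $\nu$ extends $\mu$.

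The main obstacle is the remaining sub-case when $p < q$ and $\phi_p = \psi_p \tau$ with $\tau$ nontrivial. Here the maximality of $\La_2$-blocks in normal form, combined with $\psi_{p+1} \in \La_1$, should force a contradiction: left-cancellation applied to the meeting $\mu \Cap \nu$ reduces it to $\tau \Cap \psi_{p+1} \cdots \psi_q$, with $\tau$ beginning in $\La_2$ and $\psi_{p+1} \cdots \psi_q$ beginning in $\La_1$---exactly the configuration ruled out by the subcategory-initial lemma from the earlier step. Verifying this contradiction carefully, so that the ``bad'' sub-case really cannot occur, is where I expect to spend the most care.
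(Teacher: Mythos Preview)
Your proposal is correct and follows the same route as the paper: reduce to normal forms and invoke \cite[Lemma~3.4]{MS22}. The paper's version is shorter because it uses that lemma more fully. When the normal-form lengths differ, \cite[Lemma~3.4]{MS22} already guarantees that the longer path's block at position $p$ extends the shorter path's final block $\phi_p$, so your ``main obstacle'' sub-case (with $\phi_p = \psi_p\tau$, $\tau$ nontrivial) never arises and no separate contradiction argument is needed. When $p = q$, the paper simply observes that $\La_2$ is a $1$-graph, so a common extension of $\phi_p,\psi_p \in \La_2$ immediately forces one to be a prefix of the other; your invariance-of-$\La_2$-edges reasoning reaches the same conclusion but by hand. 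Likewise, your separate argument that $\phi_p$ and $\psi_p$ lie in the same $\La_j$ (including the $p=1$ ``closed gate'' step) is already part of the content of \cite[Lemma~3.4]{MS22}.
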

\begin{proof}
    If either $\mu$ or $\nu = v_m$, then this is trivially true. So assume $|\mu|, |\nu| \geq 1$. Write both in normal form:
    \begin{align*}
        \mu &= \th_1 \th_2 \cdots \th_M\\
        \nu &= \eta_1 \eta_2 \cdots \eta_N.
    \end{align*}
    Note that $\th_M, \eta_N$ must be in $\La_2$ since $\mu_{|\mu|}$ and $\nu_{|\nu|} \in \La_2^1.$

    If $M \neq N$ then by \cite[Lemma 3.4]{MS22}, one of $\mu$ or $\nu$ extends the other. Otherwise, $M = N$, $\th_i = \eta_i$ for $i < M$, and $\th_M, \eta_M$ have a common extension in $\La_2$. Say without loss of generality that $|\th_M| \leq |\eta_M|$. Then since $\La_2$ is a 1-graph, $\eta_M$ extends $\th_M$, and hence $\nu$ extends $\mu$.
\end{proof}

\begin{Notation*}
For $m \ge 1$ and $\mu \in \La$, let $Z_m(\mu) = Z(v_m \mu)$.    
\end{Notation*}

\begin{Definition}\label{def Pmn}\cite[Lemma 5.3]{MS22}  Let $m \ge 1$ and $n \ge 0$. Define
\begin{align*}
    P^{(1)}_{m,n} &= \{Z_m(\al^{n+1}\be^j)\sm Z_m(\al^{n+1}\be^{j+1}) : j \leq n\}\\
    P^{(2)}_{m,n} &= \{Z_m(\al^{i}\be^{n+1})\sm Z_m(\al^{i+1}\be^{n+1}) : i \leq n\}\\
    P^{(3)}_{m,n} &= \{Z_m(\al^{i}\be^j \la) : i, j \leq n \leq i+j, \la \in v_{m+i+j}\La_2^1\}\\
    P^{(4)}_{m,n} &= \{Z_m(\al^{n+1}\be^{n+1})\}\\
    P_{m,n} &= \bigcup_{r=1}^4 P_{m,n}^{(r)}.
\end{align*}
\end{Definition}

Now we expand on the definition of $Q_n$ from \cite[Proposition 5.6]{MS22}. 

\begin{Definition}\label{def Qmn}
For $m \ge 1$, let $Q^m_{-1} = \{Z_m\}$, and for $n \ge 0$, define $Q_n^m$ by
\begin{align*}
    Q^m_n &= \bigcup_{\mu\in \Phi^m_n} \mu P_{m+|\mu|, n-|\mu|}.
\end{align*}
\end{Definition}
Then $Q^m_n$ is a finite partition consisting of compact-open sets which refines every set of the form $Z(\nu)$ and $Z(\nu) \sm Z(\nu \lambda)$ for $\nu \in v_m\Lambda$, $|\nu| \leq n$, and $\lambda \in s(\nu)\Lambda^1$ \cite[Proposition 5.6]{MS22}.

We now prove a result that allows us to shift our perspective when discussing these partitions.

\begin{Lemma}\label{lem equiv of Qs}
    Let $m \geq 1, n \geq 0$, and $t \ge 0$. If $\th \in \Phi_n^m$, then $\th Q_{n+t-|\th|}^{m+|\th|} = Z(\th) \cap Q_{n+t}^m$. 
\end{Lemma}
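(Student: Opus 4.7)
The strategy is to verify that both sides are finite partitions of the cylinder set $Z(\th)$ into compact open subsets, and that the left-hand side is contained in the right-hand side as a collection of subsets; since one partition contains another of the same space, they must be equal. The argument is almost entirely bookkeeping against the definitions of $\Phi_n^m$, $P_{m,n}$, and $Q_n^m$, together with the refinement property from \cite[Proposition 5.6]{MS22}.

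First, I would establish the key concatenation fact: if $\th \in \Phi_n^m$ and $\mu' \in \Phi_{n+t-|\th|}^{m+|\th|}$, then $\th\mu' \in \Phi_{n+t}^m$. This follows directly from the definition: either $\mu' = v_{m+|\th|}$ and so $\th\mu' = \th \in \Phi_n^m \sse \Phi_{n+t}^m$, or $1 \le |\mu'| \le n+t-|\th|$ with $\mu'_{|\mu'|} \in \La_2^1$, in which case $\th\mu'$ has positive length at most $n+t$ and its last edge, being the last edge of $\mu'$, lies in $\La_2^1$.

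Using this, the forward inclusion $\th Q_{n+t-|\th|}^{m+|\th|} \sse Z(\th) \cap Q_{n+t}^m$ is routine. A typical element of the left-hand side has the form $\th\mu' F$ where $\mu' \in \Phi_{n+t-|\th|}^{m+|\th|}$ and $F \in P_{m+|\th|+|\mu'|,\, n+t-|\th|-|\mu'|}$. Setting $\tilde\mu := \th\mu'$, this equals $\tilde\mu F$ with $\tilde\mu \in \Phi_{n+t}^m$ and $F \in P_{m+|\tilde\mu|,\, n+t-|\tilde\mu|}$, which by Definition \ref{def Qmn} is an element of $Q_{n+t}^m$; it trivially lies in $Z(\th)$.

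For the partition statements, $Q_{n+t-|\th|}^{m+|\th|}$ partitions $Z_{m+|\th|}$ by \cite[Proposition 5.6]{MS22}, and left-cancellation makes $x \mapsto \th x$ a bijection $Z_{m+|\th|} \to Z(\th)$, so $\th Q_{n+t-|\th|}^{m+|\th|}$ partitions $Z(\th)$. Meanwhile, $Q_{n+t}^m$ refines every cylinder $Z(\nu)$ with $\nu \in v_m\La$ and $|\nu| \le n+t$ by the same reference; applied to $\nu = \th$ (whose length $|\th| \le n \le n+t$), each element of $Q_{n+t}^m$ is either entirely inside or entirely outside $Z(\th)$, so $Z(\th) \cap Q_{n+t}^m$ also partitions $Z(\th)$. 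The equality then follows from the elementary fact that if $\mc{A}$ and $\mc{B}$ are partitions of the same set and $\mc{A} \sse \mc{B}$ as collections, then $\mc{A} = \mc{B}$. The only minor obstacle is the edge case $\th = v_m$, but there the identity collapses to $Q_{n+t}^m = Z_m \cap Q_{n+t}^m$, which is immediate since $Q_{n+t}^m$ partitions $Z_m$.
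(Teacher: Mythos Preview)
Your proof is correct and takes a genuinely different route from the paper for the reverse inclusion. Both you and the paper handle the forward inclusion $\th Q_{n+t-|\th|}^{m+|\th|} \sse Z(\th) \cap Q_{n+t}^m$ the same way, via the concatenation fact $\th\mu' \in \Phi_{n+t}^m$. For the reverse inclusion, however, the paper argues directly: given $A = \phi A' \in Q_{n+t}^m$ with $A \sse Z(\th)$, it shows $\phi \Cap \th$, invokes Proposition~\ref{prop extension} to conclude one extends the other, handles the case $\phi = \th\phi'$ explicitly, and then rules out the case where $\th$ properly extends $\phi$ by a four-way case analysis over $P^{(1)}$--$P^{(4)}$. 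Your partition argument sidesteps all of this: since \cite[Proposition 5.6]{MS22} already gives that $Q_{n+t-|\th|}^{m+|\th|}$ partitions $Z_{m+|\th|}$ and that $Q_{n+t}^m$ refines $Z(\th)$, both sides are partitions of $Z(\th)$, and the forward inclusion forces equality. This is shorter and avoids the case analysis entirely; the price is that it leans on the refinement statement from \cite{MS22} as a black box, whereas the paper's argument is more self-contained and makes the combinatorics of why $\th$ cannot properly extend $\phi$ explicit.
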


\begin{proof}
    Fix $\th \in \Phi_n^m$. If $|\th|=0$ then the result is trivial, so assume $|\th|\geq 1$.
    
    First suppose $A \in Q_{n+t-|\th|}^{m+|\th|},$ then $A = \phi A'$ for some $\phi \in \Phi_{n+t-|\th|}^{m+|\th|}$ and $A' \in P_{m+|\th|+|\phi|, n+t-|\th| - |\phi|}$. Then $|\th \phi| = |\th| + |\phi| \leq |\th| + n - |\th| + t = n+t$. The final edge $(\th \phi)_{|\th\phi|} = \phi_{|\phi|} \in \La_2^1$, and $r(\th\phi) = r(\th) = v_m$. So $\th\phi \in \Phi^m_{n+t}$. Hence $\th\phi A' = \th A \in Q_{n+t}^m$. And clearly $\th A \sse Z(\th) $.

    Now, suppose $A \in Z(\th) \cap Q_{n+t}^m$. Then $A = \phi A'$ for some $\phi \in \Phi_{n+t}^m$ and some $A' \in P_{m+|\phi|, n+t-|\phi|}$, and $\phi A' \sse Z(\th)$. It follows that $\phi \Cap \th$, and Proposition \ref{prop extension} implies that one of $\phi, \th$ must extend the other. 
    
    If $\phi$ extends $\th$, so that $\phi = \th \phi'$, then $|\phi'| = |\phi| - |\th| \leq n+t - |\th|$, and $\phi'_{|\phi'|} = \phi_{|\phi|} \in \La_2^1$, and $r(\phi') = s(\th) = v_{m+|\th|}$. So, $\phi' \in \Phi_{n+t-|\th|}^{m+|\th|}$. And $A' \in P_{m+|\phi|, n+t-|\phi|} = P_{m+|\th|+|\phi'|, n+t - |\th| - |\phi'|}$ since $|\th| + |\phi'| = |\phi|$. Therefore, $\phi A' = \th \phi' A' \in \th Q_{n+t-|\th|}^{m+|\th|}$.

    So, we shall assume $\phi$ does not extend $\th$, and deduce a contradiction. In particular, $\th \neq \phi$, so $\th$ extends $\phi$ nontrivially: $\th = \phi \th'$, $|\th'| \geq 1$. Write $\th'$ in normal form: $\th' = \psi_1 \cdots \psi_N$. Then $\psi_N \in \La_2$, since $\th \in \Phi_n^m$.

    If $A' \in P^{(1)}_{m+|\phi|, n+t-|\phi|} \cup P^{(2)}_{m+|\phi|, n+t-|\phi|}$, then $A' = Z(\ze) \sm Z(\ze \ka)$ for $\ze \in \La_1, \ka \in \La_1^1$, and $|\ze| \geq n+t-|\phi|+1$. Since $A' \sse Z(\th')$, $\ze$ and $\th'$ have a common extension. Since $\ze \in \La_1$ and $\psi_N \in \La_2$, it must be the case that $N > 1$, and $\psi_1$ extends $\ze$, by \cite[Lemma 3.4]{MS22}. But this is a contradiction because $|\psi_1| \leq |\th'| = |\th| - |\phi| \leq n-|\phi| < |\ze|$.

    If $A' \in P^{(3)}_{m+|\phi|, n+t-|\phi|}$ then $A' = Z(\ze)$ where $\ze = \al^i \be^j \la$, $i + j \geq n+t-|\phi|$ and $\la \in \La_2^1$. Once again, $\ze$ and $\th'$ have a common extension. If $N = 1$, then \cite[Lemma 3.4]{MS22} implies $\al^i\be^j$ extends $\psi_1$ in $\La_1$, since $\al^i \be^j \in \La_1$. In particular, $\psi_1 \in \La_1$, but $\psi_1 = \psi_N \in \La_2$, so we have a contradiction.

    If $N =2$, then $\psi_1 = \al^i \be^j$, and $\la$ and $\psi_2$ have a common extension in $\La_2$. If $|\psi_2| = 1$ then $\psi_2 = \la$, and so $\th' = \ze$. But $|\th'| = |\th| - |\phi| \leq n - |\phi|$, and $|\ze| = i +j + 1 \geq n+t -|\phi| + 1 > n-|\phi|$.

    If $N=2$ and $\psi_2$ properly extends $\la$, or if $N > 2$, then $\th'$ properly extends $\ze$, and so $|\th'| > |\ze|$, which is again a contradiction.

    If $A' \in P^{(4)}_{m+|\phi|, n+t-|\phi|}$ then $A' = Z(\ze)$ for $\ze = \al^{n+t-|\phi|+1} \be^{n+t-|\phi|+1}$, and $\ze$ and $\th'$ have a common extension. Since $\ze \in \La_1$ and $\psi_N \in \La_2$, $N > 1$. So, $\psi_1$ extends $\ze$. But $|\ze| \geq 2(n+t -|\phi|+1) > n-|\phi| \geq |\th'| > |\psi_1|$, so we have a contradiction.
\end{proof}

Recall that a partition $\mc{E}$ is said to \textit{refine} a set $A$ if $A$ can be written as a union of sets in $\mc{E}$. A partition $\mc{E}$ refines another partition $\mc{E}'$ if for all $E \in \mc{E}'$, $\mc{E}$ refines $E$. And finally, a set $A$ is said to refine $\mc{E}$ if there exists $E \in \mc{E}$ such that $A \sse E$.

\begin{Lemma}\label{lem Q_n+1 refines Q_n}
    Let $m \ge 1$ and $n \ge 0$. Then $Q_{n+1}^m$ refines $Q_n^m$.
\end{Lemma}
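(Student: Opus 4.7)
The plan is to split a generic element of $Q_{n+1}^m$ according to its $\Phi$-index and treat each case with a different device. Write $A = \mu C$ with $\mu \in \Phi_{n+1}^m$ and $C \in P_{m+|\mu|,\, n+1-|\mu|}$, and separate on whether $|\mu| \le n$ or $|\mu| = n+1$.

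When $|\mu| \le n$, we have $\mu \in \Phi_n^m$, so it suffices to show the auxiliary claim that $P_{m',n'+1}$ refines $P_{m',n'}$ (applied with $m' = m+|\mu|$, $n' = n-|\mu|$). This gives some $C' \in P_{m+|\mu|,\,n-|\mu|}$ with $C \sse C'$, and hence $A = \mu C \sse \mu C' \in \mu P_{m+|\mu|,\,n-|\mu|} \sse Q_n^m$. The auxiliary claim itself is a case analysis over the four families $P^{(r)}_{m',n'+1}$. The commutativity of $\al$ and $\be$ in $\La_1$ yields the identity $Z_{m'}(\al^{n'+2}\be^{j+1}) = Z_{m'}(\al^{n'+1}\be^{j+1}) \cap Z_{m'}(\al^{n'+2}\be^j)$, from which elements of $P^{(1)}_{m',n'+1}$ are seen to sit inside elements of $P^{(1)}_{m',n'}$ in the interior and inside the unique element of $P^{(4)}_{m',n'}$ on the boundary $j = n'+1$; by symmetry $P^{(2)}_{m',n'+1}$ behaves analogously, and $P^{(4)}_{m',n'+1}$ is trivially contained in $P^{(4)}_{m',n'}$. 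The subtle case is $P^{(3)}_{m',n'+1}$: interior points $(i,j)$ with $i,j \le n'$ land in $P^{(3)}_{m',n'}$ directly, while the boundary $i = n'+1$ (and similarly $j = n'+1$) falls into $P^{(1)}_{m',n'}$ (or $P^{(4)}_{m',n'}$ if also $j = n'+1$), using that a $\La_2$-edge $\la$ cannot commute past $\al,\be$, so, e.g., $Z_{m'}(\al^{n'+1}\be^j\la) \cap Z_{m'}(\al^{n'+1}\be^{j+1}) = \emp$ by uniqueness of the first $\La_1$-chunk of a normal form.

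When $|\mu| = n+1$, the definition of $\Phi_{n+1}^m$ forces $\mu_{n+1} \in \La_2^1$. Let $k$ be the largest index in $\{0,1,\dots,n\}$ with $k = 0$ or $\mu_k \in \La_2^1$, and set $\nu = \mu_1 \cdots \mu_k$ (with the convention $\nu = v_m$ if $k=0$). Then $\nu \in \Phi_n^m$, while the edges $\mu_{k+1},\dots,\mu_n$ all lie in $\La_1^1$; using the commutativity of $\al$ and $\be$, their composition equals $\al^p\be^q$ for some $p, q \ge 0$ with $p+q = n-k$. Thus $\mu = \nu\,\al^p\be^q\,\mu_{n+1}$, and since $p, q \le n-k$ and $\mu_{n+1} \in v_{m+k+p+q}\La_2^1 = v_{m+n}\La_2^1$, we have $Z_{m+k}(\al^p\be^q\mu_{n+1}) \in P^{(3)}_{m+k,\,n-k}$. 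Consequently $Z(\mu) = \nu \cdot Z_{m+k}(\al^p\be^q\mu_{n+1}) \in \nu P_{m+k,\,n-k} \sse Q_n^m$, and since $A \sse Z(\mu)$ we are done. The main obstacle is the bookkeeping in the auxiliary claim for $P^{(3)}_{m',n'+1}$, where the boundary sub-cases cross into the other three families and require the non-commutation of $\La_2$-edges with $\La_1$-edges to rule out spurious overlaps.
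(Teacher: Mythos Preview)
Your proof is correct, but it takes the dual route to the paper's. The paper fixes $E \in Q_n^m$ and exhibits an explicit decomposition of $E$ into elements of $Q_{n+1}^m$; you fix $A \in Q_{n+1}^m$ and locate the unique element of $Q_n^m$ containing it. Since both $Q_n^m$ and $Q_{n+1}^m$ are partitions of $Z_m$ (by \cite[Proposition~5.6]{MS22}), the two directions are equivalent, and your containment check is arguably lighter than the paper's exhaustive decompositions. Your Case~2 (where $|\mu|=n+1$) is particularly clean: peeling off the final $\La_2$-edge and recognising $Z(\mu)$ itself as an element of $Q_n^m$ via $P^{(3)}_{m+k,\,n-k}$ avoids the paper's explicit listing of $P_{m',0}$.

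Two small points worth tightening. First, you should state explicitly that $Q_n^m$ and $Q_{n+1}^m$ are partitions of the \emph{same} set $Z_m$; without this, ``each element of $Q_{n+1}^m$ lies in some element of $Q_n^m$'' would not yield refinement in the paper's sense. Second, your phrase ``$P_{m',n'+1}$ refines $P_{m',n'}$'' is not quite right in the paper's terminology: these collections do \emph{not} partition the same set (indeed $\bigcup P_{m',n'+1} \subsetneq \bigcup P_{m',n'}$, since the former omits paths whose initial $\La_1$-chunk has length exactly $n'$), so refinement-as-partitions fails. What you actually verify---and what you actually need---is only that each element of $P_{m',n'+1}$ is contained in some element of $P_{m',n'}$. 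State the auxiliary claim in that form and the argument is airtight.
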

\begin{proof}
    Let $E = Z_m(\al^{n+1} \be^j) \sm Z_m(\al^{n+1}\be^{j+1}) \in P^{(1)}_{m,n}$ where $j \leq n$. Then
    \begin{align*}
        E &= Z_m(\al^{n+2} \be^j)\sm Z_m(\al^{n+2}\be^{j+1}) \cup \bigcup_{\la \in v_{m+n+2+j}\La_2^1} Z_m(\al^{n+1}\be^j \la).
    \end{align*}
    The set $Z_m(\al^{n+2} \be^j)\sm Z_m(\al^{n+2}\be^{j+1}) \in P_{m,n+2}^{(1)}$, and for each $\la \in v_{m+n+2+j}\La_2^1$, the set $Z_m(\al^{n+1}\be^j \la) \in P_{m,n+1}^{(3)}$.
    
    If $E \in P^{(2)}_{m,n}$, a similar decomposition to the one for $E \in P_{m,n}^{(1)}$ suffices.

    If $E = Z_m(\al^{i}\be^{j} \la) \in P^{(3)}_{m,n}$ for $i, j \le n \le i+j$ and some $\la \in \La_2^1$ with $r(\la) = s(v_m \al^i \be^j)$, then $i, j \le n+1$, so if $n+1 \le i+j$ then $E \in P^{(3)}_{m,n+1}$. If $i+j = n$ then $v_m \al^i \be^j \la \in \Phi_{n+1}^m$. Let $m' = m + i + j + 1$, and let $\th = v_m \al^i \be^j \la$. Then $E = \th Z_{m'}$, and $P_{m',0} = \{Z_{m'}(\al)\sm Z_{m'}(\al \be), Z_{m'}(\be)\sm Z_{m'}(\al \be), Z(\al\be)\} \cup \{Z_{m'}(\ka) : \ka \in v_{m'}\La_2^1\}$. Hence $Z_{m'} = \cup_{F \in P_{m',0}} F$, and $E = \cup_{F \in P_{m',0}} \th F$. Each $\th F \in Q_{n+1}^{m}$, so $E$ is refined by $Q_{n+1}^m$.
    
    If $E = Z_m(\al^{n+1} \be^{n+1}) \in P^{(4)}_{m,n}$, then 
    \begin{multline*}
        E = Z_m(\al^{n+2} \be^{n+1}) \sm Z_m(\al^{n+2} \be^{n+2}) \cup Z_m(\al^{n+1} \be^{n+2}) \sm Z_m(\al^{n+2} \be^{n+2}) \\ \cup \bigcup_{\la \in v_{m+2n+2}\La_2^1} Z_m(\al^{n+1}\be^{n+1}\la) \cup Z_m(\al^{n+2}\be^{n+2}).
    \end{multline*}
    Finally, if $\th \in \Phi_n^m$ and $E \in P_{m+|\th|, n-|\th|}$, then $\th \in \Phi_{n+1}^m$ as well, and $\th E$ is refined by sets in $\th Q^{m+|\th|}_{n+1-|\th|} \sse Q_{n+1}^m$.
\end{proof}

\begin{Notation} We set the following:
\begin{enumerate}[(1)]
    \item If $\mc{E}$ is a partition of a set $Z$, and $A \sse Z$ is refined by $\mc{E}$, let $A \cap \mc{E}$ denote the partition $\{ E \in \mc{E} : E \sse A\}$.

    \item In the special case where $E \in Q_{n}^{m}$, let $E^m_{n+1} \equiv E_{n+1} := E \cap Q_{n+1}^m$.
\end{enumerate}
\end{Notation}

\begin{Notation}
    If $\De = (\de, \ep) \in S_{j,\ell}$ and $F \sse Z_{m(j,\ell)}$ is compact open, let $\eta_{\De, F} = \eta(\chi_{[\de, \ep, F]})$.
\end{Notation}

\begin{Definition}\label{def K spaces}
   Fix $j \ge 1, \ell \ge 0$, and $n \ge -1$. Let $m = m(j,\ell)$. For $\De = (\de, \ep) \in S_{j, \ell}$ and $E \in Q_n^m$, define
       $$K_{\De}(E, E\cap Q_{n+1}^m) \equiv K_{\De}(E)_{n+1} := \spn\{\eta_{\De, F} : F \in E_{n+1}^m \} \cap \{\eta_{\De, E}\}^{\perp}.$$
\end{Definition}
Since $E_{n+1}^m$ is a finite partition of $E$ for any $E \in Q_n^m$, $K_{\De}(E)_{n+1}$ is a finite dimensional subspace of $H$. These spaces are constructed so that 
\begin{itemize}
    \item if $E, F \in Q_n^m$, $E \neq F$, then $K_{\De}(E)_{n+1} \perp K_{\De}(F)_{n+1}$, and

    \item if $E \in Q_n^m$ and $E' \in Q_t^m$ for some $t > n$, then $K_{\De}(E)_{n+1} \perp K_{\De}(E')_{t+1}$.
\end{itemize}

\begin{Definition}\label{def H_j l n}
    For $j \ge 2$, $\ell \ge 0$, and $n \ge -1$, let $m = m(j,\ell)$, and let
    \begin{align*}
        H_{j, \ell, n} &= \bigoplus_{\De \in S_{j,\ell}} \bigoplus_{E \in Q_n^{m}} K_{\De}(E)_{n+1},\\
        H_{j,\ell,-2} &= \bigoplus_{\De \in S_{j,\ell}} \C \eta_{\De, Z_m}.
    \end{align*}
    For $j =1$, we need to modify these formulas to account for $S_{1,\ell}$ and $S_{0,\ell}$ (see Theorem \ref{thm H_j}(\ref{thm H_j 1}) for comparison): for $n \ge -1$, let
    \begin{align*}
        H_{1,\ell,n} &= \bigoplus_{\De \in S_{0,\ell}} \bigoplus_{E \in Q_n^{m(0,\ell)}} K_{\De}(E)_{n+1} \oplus \bigoplus_{\De \in S_{1,\ell}} \bigoplus_{E \in Q_n^{m(1,\ell)}} K_{\De}(E)_{n+1},\\
        H_{1, \ell, -2} &= \bigoplus_{\De \in S_{0,\ell}} \C \eta_{\De, Z_{m(0,\ell)}} \oplus \bigoplus_{\De \in S_{1,\ell}} \C \eta_{\De, Z_{m(1,\ell)}}.
    \end{align*}
\end{Definition}

Since $S_{j,\ell}$ and $Q_{n}^{m(j,\ell)}$ are finite for all $j \ge 0, \ell \ge 0, n \ge -1$, each $H_{j,\ell,n}$ (and each $H_{j,\ell,-2}$) is finite-dimensional. Thus we have decomposed $H_j$ into a direct sum of finite-dimensional pieces:

\begin{Theorem}\label{thm H_j decomposition} Let $j \ge 1$. 
Then 
    \begin{align*}
        H_j = \bigoplus_{\ell \ge 0} \bigoplus_{n \ge -2} H_{j,\ell,n}.
    \end{align*}
\end{Theorem}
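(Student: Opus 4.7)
The plan is to prove the decomposition in two main stages. First, for each $\De \in S_{j,\ell}$ (and, when $j=1$, also each $\De \in S_{0,\ell}$), I will establish the \emph{within-column} orthogonal decomposition
\[
N_\De \;:=\; \cspn\{\eta_{\De, F} : F \sse Z_{m(j,\ell)} \text{ compact open}\} \;=\; \C \eta_{\De, Z_{m(j,\ell)}} \oplus \bigoplus_{n \ge -1} \bigoplus_{E \in Q_n^{m(j,\ell)}} K_\De(E)_{n+1}.
\]
Second, I will show that $N_\De \perp N_{\De'}$ for distinct pairs $\De, \De'$ appearing in the description of $H_j$ in Theorem \ref{thm H_j}. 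Once both stages are complete, regrouping the outer sums by the indices $(\ell, n)$ rather than by $\De$ converts $\bigoplus_{\ell \ge 0} \bigoplus_{\De} N_\De$ into $\bigoplus_{\ell \ge 0} \bigoplus_{n \ge -2} H_{j,\ell,n}$, and Theorem \ref{thm H_j} identifies the original sum with $H_j$.

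For the within-column stage, the useful identity is $\langle \eta_{\De, E_1}, \eta_{\De, E_2}\rangle = \mf{m}(E_1 \cap E_2)$, obtained by the same calculation as in \eqref{inner-product-formula}. From this identity, Lemma \ref{lem Q_n+1 refines Q_n}, and Definition \ref{def K spaces}, the orthogonality of the $K_\De(E)_{n+1}$ among themselves and with $\C \eta_{\De, Z_{m(j,\ell)}}$ reduces to an elementary case analysis on whether the sets $E \in Q_n^{m(j,\ell)}$ and $E' \in Q_t^{m(j,\ell)}$ are disjoint or nested. An induction on $n$ then shows that the partial sum $\C\eta_{\De, Z_{m(j,\ell)}} + \sum_{t=-1}^n \sum_{E \in Q_t^{m(j,\ell)}} K_\De(E)_{t+1}$ equals $\spn\{\eta_{\De, G} : G \in Q_{n+1}^{m(j,\ell)}\}$; the inductive step only requires $\eta_{\De, E} = \sum_{F \in E_{n+1}^{m(j,\ell)}} \eta_{\De, F}$ and a dimension count. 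Since every compact open subset of $Z_{m(j,\ell)}$ is refined by $Q_n^{m(j,\ell)}$ for sufficiently large $n$ by \cite[Proposition 5.6]{MS22}, taking closures yields $N_\De$.

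For the across-column stage, \eqref{inner-product-formula} reduces matters to showing $[\ep', \de', F'][\de, \ep, F] \cap \mc{G}^{(0)} = \emp$ for all compact open $F, F'$ whenever $\De = (\de, \ep)$ and $\De' = (\de', \ep')$ are distinct. If $\de' \not\Cap \de$ the product is already empty; otherwise, writing $\de, \de'$ in normal form and using \cite[Theorem 3.4]{MS22} to force one to extend the other, I will chase cases in close parallel with the analogous argument at the end of the proof of Theorem \ref{thm H_j}. The contradictions will come either from an equality $\de = \de'$ propagating through the product to force $\ep = \ep'$ and hence $\De = \De'$, or from producing a nontrivial common suffix of $\de$ and $\ep$ (or of $\de'$ and $\ep'$), which is forbidden by Remark \ref{remark S_jl}(\ref{remark S_jl 3}).

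The main obstacle I expect is the subcase of the across-column stage in which $|\de| = |\de'|$, i.e.\ $\De$ and $\De'$ lie in the same $S_{j,\ell}$ but are distinct pairs. The length-based contradiction of the form ``$|\nu'| \le p_{j-1} < p_j = |\nu|$'' used inside the proof of Theorem \ref{thm H_j} is unavailable here, so I will instead exploit the unique normal-form factorization (with $\mu \in \Phi_{p_j}$ ending in a $\La_2^1$-edge and $\eta \in \La_1$), the commutation of $\alpha$'s and $\beta$'s in $\La_1$, and the constraint $\mf{d}(\eta) \perp \mf{d}(\th)$ built into the definition of $S_{j,\ell}$, to force either coincidence of the pairs or a forbidden nontrivial common suffix. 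Once both stages are in hand, the desired equality $H_j = \bigoplus_{\ell \ge 0} \bigoplus_{n \ge -2} H_{j,\ell,n}$ follows by the regrouping described above.
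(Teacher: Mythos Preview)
Your proposal is correct, and the within-column induction establishing $\spn\{\eta_{\De, G} : G \in Q_{n+1}^{m}\} = \C\eta_{\De, Z_m} \oplus \bigoplus_{i=-1}^{n}\bigoplus_{E \in Q_i^m} K_\De(E)_{i+1}$ is essentially identical to the paper's argument. You go further than the paper by explicitly verifying the across-column orthogonality $N_\De \perp N_{\De'}$ for distinct $\De,\De'$; the paper uses this tacitly through the $\oplus$ notation in Definition~\ref{def H_j l n} but does not spell it out in the proof of Theorem~\ref{thm H_j decomposition}, and your plan to adapt the normal-form case analysis from the proof of Theorem~\ref{thm H_j} (together with Remark~\ref{remark S_jl}(\ref{remark S_jl 3})) is the natural way to supply it. One small correction: the inner-product identity is $\langle \eta_{\De, E_1}, \eta_{\De, E_2}\rangle = \mf{m}\bigl(\ep(E_1 \cap E_2)\bigr)$ rather than $\mf{m}(E_1 \cap E_2)$ (compare \eqref{eq0 thm a acts on K 1}), but since your argument only uses additivity and vanishing on disjoint sets, this is immaterial.
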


\begin{proof}
    We must show that for all $\ell \ge 0$, all $\De \in S_{j,\ell}$ (and all $\De \in S_{0,\ell},$ if $j = 1$), and all compact open sets $E \sse Z_{m(j,\ell)}$, that $\eta_{\De, E} \in \bigoplus_{n \ge -2} H_{j,\ell,n}$. If $E$ is compact open, then it is a finite union of basic compact open sets, ie, elements of $\mc{B}$ defined in equation \eqref{compact-open-bis}. So, $\eta_{\De,E}$ is a finite sum of vectors of the form $\eta_{\De,B}$ for $B \in \mc{B}$. If $B = Z_m(\mu) \sm \cup_{i=1}^j Z_m(\nu_i) \in \mc{B}$, take $n = \max\{ |\mu|, |\nu_i| : i = 1, \dots, j\}$. Then $Q^m_n$ refines $Z_m(\mu)$ and every $Z_m(\nu_i)$, so $\eta_{\De, B}$ can be written as a linear combination over $\{\eta_{\De, E} : E \in Q^m_n\}$.

    So, fix $j\ge 1, \ell \ge 0,$ and $\De \in S_{j,\ell}$, and let $m = m(j,\ell)$. We prove for all $n \ge -2$ that
    \begin{equation*}
        \spn\{\eta_{\De,F}: F \in Q^m_{n+1} \} \sse \C \eta_{\De, Z_m} \oplus \bigoplus_{i=-1}^n \bigoplus_{E \in Q_i^m} K_{\De}(E)_{i+1}.
    \end{equation*}
    We proceed by induction. Let $V_n$ denote the space on the righthand side.
    First, when $n = -2$, the only set in $Q^m_{-1}$ is $Z_m$, and $\spn\{\eta_{\De, Z_m}\} = \C \eta_{\De, \Z_m} \sse V_{-2}$.

    Now assume that $n \ge -2$ and that $\spn\{\eta_{\De, F} : F \in Q^m_{n+1} \} \sse V_n$. Then
    \begin{align*}
        V_{n+1} &= \C \eta_{\De, Z_m} \oplus \bigoplus_{i=-1}^{n+1} \bigoplus_{E \in Q_i^m} K_{\De}(E)_{i+1}\\
        &= \C \eta_{\De, Z_m} \oplus \bigoplus_{i=-1}^n \bigoplus_{E \in Q_i^m} K_{\De}(E)_{i+1} \oplus \bigoplus_{E \in Q_{n+1}^m} K_{\De}(E)_{n+2}\\
        &\supseteq \spn\{\eta_{\De,E} : E \in Q_{n+1}^m\} \oplus \bigoplus_{E \in Q_{n+1}^m} K_{\De}(E)_{n+2}.
    \end{align*}
    For each $E \in Q_{n+1}^m$, 
    \begin{align*}
        V_{n+1} &\supseteq \spn \{ \eta_{\De, E} \} \oplus \left( \spn \{ \eta_{\De,F} : F \in E_{n+2}^m \} \cap \{ \eta_{\De,E} \}^{\perp} \right)\\
        &= \spn\{\eta_{\De,F} : F \in E_{n+2}^m\},
    \end{align*}
    hence $$V_{n+1} \supseteq \spn\{ \eta_{\De,F} : F \in \bigcup_{E \in Q_{n+1}^m} E_{n+2}^m \}.$$
    Now, $Q_{n+2}^m$ refines by $Q_{n+1}^m$, meaning that if $F \in Q_{n+2}^m$ then there exists $E \in Q_{n+1}^m$ such that $F \sse E$. Thus $Q_{n+2}^m = \bigcup_{E \in Q_{n+1}^m} E_{n+2}^m$. Therefore, $V_{n+1} \supseteq \spn\{\eta_{\De,F} : F \in Q_{n+2}^m\}$, as desired.
\end{proof}

To summarize, $H_j$ decomposes into finite dimensional subspaces indexed by $\ell$ and $n$; 
$j$ tells us there is a barrier that imposes some inflexibility on the first $p_j$ edges of $\delta$ and $\ep$ in an element $\eta(\chi_{[\delta, \ep, E]}) \in H_j$.
Paired with $j$, the parameter $\ell$ tells us how long $\delta$ and $\ep$ are, since $|\delta| = |\ep| = p_j + \ell$. And $n$ indicates the maximum ``depth'' of the cylinder sets that can appear in the third coordinate. 

On the model of Christensen and Ivan, we define a net $(c(j,\ell,n))$ of scalars tending to infinity, and set $D|_{H_{j,\ell,n}}$ to be multiplication by $c(j,\ell,n)$. We use Proposition \ref{prop the point of D} to show for all $i$ and all $a \in A_i$ that $[D,a]$ is densely defined and bounded on $H$, and thus extends to a bounded operator.

\subsection{Bandwidth Considerations}

In this subsection, we prove the main result that we require for our spectral triple construction. The ultimate goal is to prove that elements $a$ of a dense subset of $A$ only shift the finite-dimensional direct summands $H_{j,\ell,n}$ of $H$ by a bounded amount.

The first theorem identifies and justifies the form of the elements $a$ that we consider.

\begin{Theorem}\label{thm total set for A}
    Let $B_0 = \{\chi_{[\mu c^r, \nu d^s, Z(s(\nu d^s))]} : (\mu, \nu) \in S_j$ for some $j \ge 0$, $|\mu c^r| = |\nu d^s|, \ \{c, d\} = \{\al,\be\} \}$. Then finite convolutions of elements in $\spn (B_0)$ comprise a total set in $A = C^{*}(\mc{G})$.
\end{Theorem}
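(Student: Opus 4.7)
The plan is to show that every $\chi_F$, for $F$ a basic compact open bisection in $\mc{G}$, lies in the algebra $\mc{A}_0$ of finite convolutions of elements of $\spn(B_0)$; since such characteristic functions span a norm-dense subset of $A = C^*(\mc{G})$, this establishes totality. First observe that $B_0$ is closed under adjoint, via the symmetry of $S_j$ under coordinate swap and the identity $\chi_{[\mu c^r, \nu d^s, \cdot]}^* = \chi_{[\nu d^s, \mu c^r, \cdot]}$, so $\mc{A}_0$ is in fact a $*$-subalgebra.

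I would proceed by two successive reductions. First, I would reduce to bisections with full-cylinder third coordinate: a general basic bisection $[\sigma, \tau, E]$ has $E = Z(\lambda) \sm \bigcup_i Z(\nu_i)$, so inclusion-exclusion writes $\chi_E$ as an integer combination of cylinder indicators $\chi_{Z(\lambda_k)}$, and the equivalence $[\sigma, \tau, \lambda x] \sim [\sigma\lambda, \tau\lambda, x]$ converts each $\chi_{[\sigma, \tau, Z(\lambda_k)]}$ into $\chi_{[\sigma\lambda_k, \tau\lambda_k, Z(s(\tau\lambda_k))]}$. Second, using Theorem \ref{thm new G_i desc}, I would rewrite $(\sigma, \tau)$ in canonical form $(\mu\eta, \nu\th)$ with $(\mu, \nu) \in S_j$ for some $j \ge 0$, $\eta, \th \in \La_1$, and $\mf{d}(\eta) \perp \mf{d}(\th)$. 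When $\eta, \th$ are single-letter powers (forced to involve distinct letters if both are nontrivial, by orthogonality), or when one is trivial and the other a single-letter power, $\chi_{[\sigma, \tau, Z(s(\tau))]}$ lies directly in $B_0$.

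The principal remaining case, and the main obstacle, is when one tail takes the mixed form $\al^p\be^q$ with $p,q > 0$, which by orthogonality forces the other to be trivial. For this I would apply the convolution identity $\chi_{[\sigma, \tau, Z]} = \chi_{[\sigma, \sigma', Z]} * \chi_{[\sigma', \tau, Z]}$ for a carefully chosen intermediate $\sigma' \in v_1 \La v_{s(\tau)}$, arranged so that each factor either lies directly in $B_0$ or admits common-suffix reduction via the equivalence at the cost of a restricted cylinder in the third coordinate; such restricted cylinders are then expanded by inclusion-exclusion against the partitions $Q_n^m$ of Definition \ref{def Qmn}, and the projections $\chi_{[\rho, \rho, Z]}$ needed for this expansion are obtained from $B_0$ via self-convolutions of the form $\chi_{[\rho, \rho', Z]} * \chi_{[\rho', \rho, Z]}$. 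The delicate point is closing the recursion: one must verify that the resulting pieces admit further decomposition of strictly smaller complexity, where a suitable measure combines the sum of $p+q$ values across mixed tails with the depth of residual non-full cylinders, ensuring termination. A cleaner alternative would be to prove the statement first within each $A_j = C^*(\mc{G}_{p_j})$ (where the combinatorial data is finite and controlled by Theorem \ref{thm new G_i desc}) and then pass to the inductive limit via \cite[Theorem 5.1]{MS22}.
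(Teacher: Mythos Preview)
Your first reduction (to full-cylinder third coordinate via inclusion--exclusion and the equivalence $[\sigma,\tau,Z(\lambda)]=[\sigma\lambda,\tau\lambda,Z(s(\lambda))]$) matches the paper exactly. After that, however, you miss the key simplification and are led into an unnecessarily complicated scheme.

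The paper does \emph{not} reduce to $\mf{d}(\eta)\perp\mf{d}(\th)$. Instead, for an arbitrary element $\chi_{[\mu\al^a\be^b,\,\nu\al^c\be^d,\,Z_m]}$ with $(\mu,\nu)\in S_j$ and any $a,b,c,d\ge 0$, it writes down a single explicit three-term factorization exploiting the $\al\be=\be\al$ commutation in $\La_1$:
\[
[\mu\al^a\be^b,\nu\al^c\be^d,Z_m]
=[\mu\be^b,\mu\al^b,Z_{m-a}]\cdot[\mu\al^{a+b},\nu\be^{c+d},Z_m]\cdot[\nu\be^c,\nu\al^c,Z_{m-d}].
\]
Each factor is visibly of the form in $B_0$ (the outer two use the diagonal pairs $(\mu,\mu)$ and $(\nu,\nu)$; the middle uses $(\mu,\nu)$), so the proof ends in one line. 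There is no recursion, no appeal to the partitions $Q_n^m$, and no inclusion--exclusion beyond the initial step.

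Your route, by contrast, has a structural tension: achieving $\mf{d}(\eta)\perp\mf{d}(\th)$ as in the proof of Theorem~\ref{thm new G_i desc} is done by pushing a common $\al^a\be^b$ into the third coordinate, which destroys the full-cylinder form you obtained in your first reduction. You would then have to re-expand against $Q_n^m$, producing new pairs that again need normalization, and you yourself flag the termination of this loop as ``delicate.'' The paper's factorization sidesteps the entire issue; the moral is that the commutation in $\La_1$ lets you trade a mixed tail $\al^a\be^b$ for two pure tails at the cost of one extra convolution factor, and this should replace your recursive plan.
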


\begin{proof}
    The set $\{\chi_{[\de, \ep, Z(\th)]} : |\de| = |\ep|,$ and $\th \in s(\ep) \La \}$ is a total set for $A$. In $\mc{G}$, $[\de, \ep, Z(\th)] = [\de\th, \ep\th, Z(s(\th))]$,
    and we can assume $\de \th = \mu \eta$ and $\ep \th = \nu \ze$ where $\mu, \nu \in \Phi_{p_j}$ for some $j \ge 1, \eta, \ze \in \La_1$. Therefore, the set $\{\chi_{[\mu\eta, \nu\ze, Z(s(\ze)]} :  |\mu\eta|= |\nu\ze|$ with $(\mu, \nu) \in S_{j}$ for some $j \ge 1, \eta, \ze \in \La_1\}$ is a total set in $A$.
    
    Let $\chi_{[\mu\eta, \nu\ze, Z(s(\ze)]}$ be an element of that set.
    Let $m = |\mu\eta| + 1.$ If $\eta = \al^a \be^b$ and $\ze = \al^c \be^d$, then
    \begin{align*}
        [\mu \al^a \be^b, \nu\al^c \be^b, Z_m] =  [\mu \be^b, \mu \al^b, Z_{m-a}] [\mu \al^{a+b}, \nu \be^{c+d}, Z_m] [\nu \be^c, \nu\al^c, Z_{m-d}].
    \end{align*}
    Thus, $\chi_{[\mu\eta, \nu\ze, Z(s(\ze)]} = \chi_{[\mu \be^b, \mu \al^b, Z_{m-a}]} * \chi_{[\mu \al^{a+b}, \nu \be^{c+d}, Z_m]} * \chi_{[\nu \be^c, \nu\al^c, Z_{m-d}]}$. This completes the proof.
\end{proof}

\begin{Notation}
    For $\De = (\de, \ep)$ with $|\de| = |\ep|$, let $a_{\De} = \chi_{[\de, \ep, Z(s(\ep))]}$.
\end{Notation}

\begin{Theorem}\label{thm bandwidth}
    Let $j \ge 1$, $\ell \ge 0$, and let $(\mu, \nu) \in S_j$. Let $\De = (\de, \ep) = (\mu \al^r, \nu \be^s)$, where $|\de| = |\ep| = p_j + \ell$ (ie, $r = p_j - |\mu| + \ell,$ $s = p_j - |\nu| + \ell$). Set $m = m(j,\ell)$.
        Let $j' > j, \ell' \ge m,$ take $(\mu', \nu') \in S_{j'}$, and $\De' = (\de', \ep') = (\mu' c^{r'}, \nu' d^{s'}) \in S_{j', \ell'}$, so $\{c, d\} = \{\al, \be\}$, and set $m' = m(j', \ell')$. Let $n \ge m$, and take $E \in Q_n^{m'}$.

    Then
    \begin{align*}
        a_{\De} \cdot K_{\De'}(E)_{n+1} \sse \bigoplus_{q=\ell' - m}^{\ell'+m} \ \bigoplus_{\Gamma \in S_{j', q}} \ \bigoplus_{t=n-m}^{n+m} \ \bigoplus_{G \in Q_t^{m(j',q)}} K_{\Gamma}(G)_{t+1}.
    \end{align*}
\end{Theorem}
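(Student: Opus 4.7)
The plan is to compute the convolution $a_\De * \chi_{[\de', \ep', F]}$ directly via the groupoid multiplication rule for each basic vector $\eta_{\De', F}$ with $F \in E_{n+1}^{m'}$ that spans $K_{\De'}(E)_{n+1}$, then place the image into the decomposition of $H_{j'}$ supplied by Theorem \ref{thm H_j decomposition}.

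First I would use the hypothesis $\ell' \ge m$ to eliminate degenerate cases. Because $|\de'| = p_{j'} + \ell' > p_j + \ell = |\ep|$, the possibility that $\ep$ extends $\de'$ is ruled out, so the only configurations producing a nonzero convolution are case (i), where $\de' = \ep \zeta$ with $|\zeta| \ge 1$ and the product is the single basic bisection $\chi_{[\de \zeta, \ep', F]}$; and case (ii), where $\ep$ and $\de'$ meet via commutation of $\al$'s and $\be$'s in $\La_1$ without one extending the other, and the product decomposes into a finite sum indexed by minimal common extensions.

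Next I would reduce each resulting bisection to $S_{j', q}$ form following the strategy of the proof of Theorem \ref{thm new G_i desc}: strip off any common suffix and then pull a common $\al^a \be^b$ block out of the two $\La_1$ tails into the cylinder set so that the remaining degrees become perpendicular. The essential length estimate uses Remark \ref{remark S_jl}(3): since $(\de', \ep') \in S_{j', \ell'}$ has no nontrivial common suffix, any common suffix of the output pair must lie within the prefix altered by the substitution replacing $\ep$ by $\de$, and therefore has length at most $|\de| \le m - 1$. A parallel count for the perpendicularization block gives the bound $|q - \ell'| \le m$. The path prepended to $F$ during these reductions also has length at most $m - 1$, and combining Lemma \ref{lem equiv of Qs} with the combinatorics of the $P_{m, n}$ generators of the $Q_t^m$ partitions shows that the transformed cylinder set refines a unique element of $Q_t^{m(j', q)}$ for some $t$ with $|t - n| \le m$.

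The main obstacle will be verifying that the image lands in the sum of the $K_\Gamma(G)_{t+1}$ subspaces and not merely in the ambient spans $\spn\{\eta_{\Gamma, G'} : G' \in G_{t+1}^{m(j', q)}\}$. The $K$-spaces exclude the one-dimensional ``mean'' subspaces $\C \eta_{\Gamma, G}$, and the image of a single $\eta_{\De', F}$ under $a_\De$ generally has nonzero projection onto some such direction; the statement survives only by virtue of the defining orthogonality $K_{\De'}(E)_{n+1} \perp \C \eta_{\De', E}$. I would show that for $\xi = \sum_F c_F \eta_{\De', F} \in K_{\De'}(E)_{n+1}$ (so that $\sum_F c_F \mf{m}(F) = 0$), the ``mean'' contributions to $a_\De \xi$ cancel across the spanning family. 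This reduces to showing that the coefficient of each offending $\C \eta_{\Gamma, G}$ direction depends linearly on $\mf{m}(F)$, which in turn follows from how cylinder measures transform under the equivalence relation defining $\mc{G}$. Case (ii) further compounds this bookkeeping and will be the most delicate step, since the sum over minimal meets must be tracked through the mean-zero cancellation simultaneously at each admissible $(q, t, \Gamma, G)$-coordinate within the window.
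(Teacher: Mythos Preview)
Your outline is essentially correct and parallels the paper's argument, but your case split is organized differently and less efficiently. The paper first proves (Proposition~\ref{prop extension 2}) that if $\ep \Cap \de'$ then either $\mu' = \nu$ or $\mu'$ is a \emph{proper} extension of $\ep$; note this is a dichotomy on $\mu'$, not on $\de'$. In the ``$\mu'$ properly extends $\ep$'' case the action is remarkably clean (Theorem~\ref{thm a acts on K 1}): writing $\mu' = \ep\zeta$ with $\zeta$ ending in $\La_2^1$, one gets $a_\De \cdot K_{\De'}(E)_{n+1} \sse K_{\De''}(E)_{n+1}$ with $\De'' = (\de\zeta c^{r'}, \ep') \in S_{j',\ell'}$---same $E$, same $n$, same $\ell'$, no stripping or perpendicularization needed, because the substitution $\ep \mapsto \de$ occurs entirely within the $\Phi_{p_{j'}}$ prefix. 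Your case (i) conflates this with the subcase $\mu' = \nu$, $c = \be$ (where $\de' = \nu\be^{r'}$ also extends $\ep = \nu\be^s$), and thereby loses this shortcut. Your case (ii) corresponds only to $\mu' = \nu$, $c = \al$, and here there is a \emph{unique} minimal common extension $\nu\al^{r'}\be^s$ (since $\La_1$ is a $2$-graph), so the product is a single bisection, not a sum; your anticipated ``sum over minimal meets'' never materializes.

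For the orthogonality obstacle you correctly identify, the paper's organization is worth noting: rather than tracking mean-zero cancellation coordinate-by-coordinate, it proves two one-sided bounds. For the lower bound on $t$ (Theorem~\ref{thm a acts on K 2}) it shows that $E'$ (either $\al^r E$ or $\si^{\be^s}(E)$) refines $Q_{n-m}^{m''}$, so that $a_\De u \perp \eta_{\De'',G}$ for every $G$ at coarser levels; the crucial identity is $\ep'' F' = \ep' F$ as subsets of $\mc{G}^{(0)}$, so the measure is literally preserved, which is a sharper statement than ``depends linearly on $\mf{m}(F)$'' (and note the relevant quantity is $\mf m(\ep' F)$, not $\mf m(F)$). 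For the upper bound (Theorem~\ref{thm a acts on K 3}) it shows the collection $\{F' : F \in E_{n+1}^{m'}\}$ is refined by $Q_{n+m}^{m''}$, placing the image inside the right span. Both halves require a somewhat lengthy case analysis over the four pieces $P^{(1)}$--$P^{(4)}$ of Definition~\ref{def Pmn}, which your sketch underestimates.
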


\begin{Remark}
    Theorem \ref{thm bandwidth} remains true if we replace $\De$ with $(\mu \be^r, \nu \al^s)$, but we omit the proof of this.
\end{Remark}

In short, Theorem \ref{thm bandwidth} says that we know how far one of our finite-dimensional building blocks $K_{\De}(E)_{n}$ is spread when acted upon by an element of $B_0$. This forms the basis of the main result of this subsection:

The bulk of this section is spent proving Theorem \ref{thm bandwidth}.

The next two results tell us the relationship between $\ep$ and $\de'$, and $\nu$ and $\mu'$.

\begin{Proposition}\label{prop ep meets de'}
    If $\De, \De'$ are as in the statement of Theorem \ref{thm bandwidth}, and $E \in Q_n^{m'}$, and $a_{\De} \cdot \eta_{\De', E'} \neq 0$, then $\ep \Cap \de'$.
\end{Proposition}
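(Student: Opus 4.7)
The plan is to trace the nonvanishing of $a_{\De} \cdot \eta_{\De', E}$ back to the nonemptiness of a product of bisections in $\mc{G}$, and then extract the meeting of $\ep$ and $\de'$ directly from the definition of groupoid composition. First I would write
\[
a_{\De} \cdot \eta_{\De', E} = \eta\bigl(\chi_{[\de, \ep, Z(s(\ep))]} * \chi_{[\de', \ep', E]}\bigr) = \eta\bigl(\chi_{[\de, \ep, Z(s(\ep))] \cdot [\de', \ep', E]}\bigr),
\]
using that $A$ acts on $H$ via left multiplication in the GNS representation, together with the convolution formula recorded after \eqref{compact-open-bis}. Since $\tau$ is faithful, the map $\eta$ is injective, so the hypothesis forces the product bisection $[\de, \ep, Z(s(\ep))] \cdot [\de', \ep', E]$ to be nonempty.

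Next I would pick any element in this product. By definition of the bisection product, it has the form $[\de, \ep, x_1] \cdot [\de', \ep', y_1]$ for a composable pair, with $x_1 \in Z(s(\ep))$ and $y_1 \in E$. Composability in $\mc{G}$ says $s([\de, \ep, x_1]) = r([\de', \ep', y_1])$, which, by the description of the range and source maps in the Preliminaries, is precisely the equality of infinite paths $\ep x_1 = \de' y_1$. Hence $\ep$ and $\de'$ are both finite prefixes of a common infinite path; truncating $\ep x_1 = \de' y_1$ at length $\max\{|\ep|, |\de'|\}$ yields a finite path in $\ep \La \cap \de' \La$, which witnesses $\ep \Cap \de'$.

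There is essentially no obstacle; the entire argument is a direct unwinding of the definitions. The only point requiring care is the bookkeeping with the equivalence relation $\sim$ defining groupoid elements, but because the range and source maps are well-defined on $\sim$-classes, choosing any representative of a composable pair is enough, and the conclusion does not depend on the choice.
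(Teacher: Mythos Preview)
Your argument is correct and follows essentially the same approach as the paper's proof: both deduce from $a_{\De}\cdot\eta_{\De',E}\neq 0$ that the bisection product $[\de,\ep,Z_m]\cdot[\de',\ep',E]$ is nonempty, and then read off from composability that some infinite path lies in both $\ep X$ and $\de' X$, whence $\ep\Cap\de'$. The only difference is cosmetic: the paper phrases the composability condition as $\ep Z_m\cap\de' E\neq\emp$ and passes directly to $\ep\La\cap\de'\La\neq\emp$, while you unpack the source/range equality $\ep x_1=\de' y_1$ and truncate explicitly.
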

\begin{proof}
    Suppose $\chi_{[\de, \ep, Z_m]} \cdot \eta_{\De', E} = \eta(\chi_{[\de, \ep, Z_m] [\de', \ep', E]}) \neq 0$. Then $[\de, \ep, Z_m]  [\de', \ep', E] \neq \emp$, so $\ep Z_m \cap \de' E \neq \emp$. In particular, $\ep \La \cap \de' \La \neq \emp,$ so $\ep \Cap \de'$.
\end{proof}

\begin{Proposition}\label{prop extension 2}
Let $j \geq 1, \ell \geq 0, m = m(j,\ell)$, and $j' \geq j, \ell' \geq m$. Suppose $(\mu, \nu) \in S_j, (\mu', \nu') \in S_{j'}$, and $(\de, \ep) = (\mu \al^r, \nu \be^s)$ with $|\de| = |\ep| = p_j + \ell, (\de', \ep') = (\mu'c^{r'}, \nu' d^{s'}) \in S_{j', \ell'}$.
    If $\ep \Cap \de'$, then either $\mu' = \nu$, or $\mu'$ is a nontrivial extension of $\ep$.
\end{Proposition}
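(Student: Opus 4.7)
The plan is first to deduce that $\mu' \Cap \nu$ from the hypothesis $\ep \Cap \de'$, then apply Proposition \ref{prop extension} to reduce to three cases, and then carry out a normal-form analysis in each case. Any common extension $\la$ of $\ep = \nu\be^s$ and $\de' = \mu' c^{r'}$ has both $\nu$ and $\mu'$ as prefixes, so $\nu \Cap \mu'$. Since $\nu \in \Phi_{p_j}$ and $\mu' \in \Phi_{p_{j'}}$ (both with range $v_1$), Proposition \ref{prop extension} gives that one of them extends the other, yielding three cases: $\mu' = \nu$ (done immediately); $\mu'$ properly extends $\nu$; or $\nu$ properly extends $\mu'$.

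In the case $\mu' = \nu \ze''$ with $|\ze''| \ge 1$, the goal is to show $\be^s$ is a prefix of $\ze''$, so that $\mu'$ is a nontrivial extension of $\ep$. The subcase $s = 0$ is immediate from $\ep = \nu$. For $s \ge 1$, I cancel $\nu$ from $\ep \Cap \de'$ to obtain $\be^s \Cap \ze'' c^{r'}$. Let $\psi_1 \cdots \psi_q$ be the normal form of $\ze''$. Since $\mu'$ ends in a $\La_2$-edge, $\psi_q \in \La_2$. Because $\be^s \in \La_1$ and the first normal-form terms of two meeting paths must lie in the same $\La_j$ by \cite[Lemma 3.4]{MS22}, we conclude $\psi_1 \in \La_1$, and then alternation forces $q \ge 2$. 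The first $\La_1$ segment of any common extension, read from the $\ze'' c^{r'}$ side, is exactly $\psi_1$ (since the next normal-form term $\psi_2 \in \La_2$), while read from the $\be^s$ side it extends $\be^s$; hence $\be^s$ is a prefix of $\psi_1$ in $\La_1$ and therefore of $\ze''$, so $\mu' = \nu\ze''$ properly extends $\ep$.

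In the case $\nu = \mu' \ze$ with $|\ze| \ge 1$, I aim to derive a contradiction. Canceling $\mu'$ from $\ep \Cap \de'$ yields $\ze \be^s \Cap c^{r'}$. Writing the normal form $\phi_1 \cdots \phi_p$ of $\ze$, the terminal edge condition on $\nu$ forces $\phi_p \in \La_2$, and the same normal-form argument as above forces $\phi_1 \in \La_1$ (and $p \ge 2$). Then the first $\La_1$ segment of a common extension is exactly $\phi_1$, and it must extend $c^{r'}$, giving $r' \le |\phi_1| \le |\ze| \le p_j$. But $(\de', \ep') \in S_{j', \ell'}$ with $|\mu'| \le p_{j'}$ gives $r' = p_{j'} + \ell' - |\mu'| \ge \ell' \ge m = p_j + \ell + 1 > p_j$, a contradiction.

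The main obstacle is the normal-form bookkeeping: verifying in each case that the first $\La_1$ segment of a common extension coincides with (rather than properly extending) the first normal-form term of the meeting path. This relies on the alternation property of normal forms ensuring that the next term lies in $\La_2$, which blocks any further $\La_1$-extension on that side. The argument leans on \cite[Lemma 3.4]{MS22} describing common extensions in terms of normal forms, together with the length estimates coming from the definitions of $S_j$ and $S_{j', \ell'}$ and the hypothesis $\ell' \ge m$.
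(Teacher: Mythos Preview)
Your proof is correct and follows essentially the same approach as the paper's: both deduce $\nu \Cap \mu'$ from $\ep \Cap \de'$, invoke Proposition~\ref{prop extension} to force one to extend the other, and then run a normal-form analysis to either show $\be^s$ sits inside the first $\La_1$-segment of $\mu'$ past $\nu$ (when $\mu'$ properly extends $\nu$) or to derive the length contradiction $r' \ge \ell' \ge m > p_j \ge |\ze|$ (when $\nu$ properly extends $\mu'$). The paper treats the degenerate cases $|\mu'|=0$ and $|\nu|=0$ separately at the outset, whereas your argument absorbs them into the general case analysis; both are fine.
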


\begin{proof}
    First, if both $|\mu'|$ and $|\nu| = 0$ then $\mu' = \nu = v_1$, so the result holds. If $|\nu| = 0$ then $\mu'$ is a nontrivial extension of $\nu = v_1$.
    If $|\mu'| = 0$ and $|\nu| \geq 1$, then since $\nu \be^s \Cap c^{r'}$ and $\nu_{|\nu|} \in \La_2^1$ while $c^{r'} \in \La_1$, $\nu$ must be a proper extension of $c^{r'}$. But then $r' = p_{j'} + \ell'$, while $|\nu| \leq p_j \leq p_{j'}$, so we have a contradiction.

    Now assume $|\nu|, |\mu'| \geq 1$.
    Let $\ze \in \ep\La \cap \de' \La = \nu\al^r \La \cap \mu' d^{s'} \La \sse \nu \La \cap \mu' \La$. It follows that $\nu \Cap \mu'$. Since $\nu \in \Phi_{p_{j}}$ and $\mu' \in \Phi_{p_{j'}}$, one of $\nu$ and $\mu'$ must extend the other, by \ref{prop extension}.
    Suppose then, for a contradiction, that $\nu$ is a proper extension of $\mu'$, so that $\nu = \mu' \xi$ for some $\xi$ with $|\xi| \ge 1$. Then $\xi_{|\xi|} = \nu_{|\nu|} \in \La_2^1$.
    Then $\ze = \ep \ze' = \nu\be^s \ze' = \mu' \xi \be^s\ze'$, and at the same time $\ze = \de' \ze'' = \mu' c^{r'} \ze''$. So, $\si^{\mu'}(\ze) = \xi \be^s \ze' = c^{r'} \ze''$, and it follows that $\xi \Cap c^{r'}$. Now, $c^{r'} \in \La_1$, and $\xi_{|\xi|} \in \La_2$, so it must be the case that $\xi$ extends $c^{r'}$. But $|\xi| < |\nu| = p_j + \ell$. And $|c^{r'}| = r' = p_{j'} - |\mu'| + \ell' \ge \ell' > p_j + \ell + 1$. So, $\xi$ cannot extend $c^{r'}$.

    Thus we have that $\mu'$ extends $\nu$, and write $\mu' = \nu \xi$. If $|\xi| = 0$ then we have that $\mu' = \nu$. Otherwise, $|\xi| \ge 1,$ and $\xi_{|\xi|} = \mu'_{|\mu'|} \in \La_2^1$. Since $\ep \Cap \de',$ and $\ep = \nu \be^s$ and $\de' = \mu' c^{r'} = \nu \xi c^{r'}$, we see that $\xi \Cap \be^s$. And since $\xi_{|\xi|} \in \La_2$ while $\be^s \in \La_1$, it must be that $\xi$ extends $\be^s$: $\xi = \be^s \xi'$. But now we have $\mu' = \nu \xi = \nu \be^s \xi' = \ep \xi'$. That is, $\mu'$ extends $\ep$. And $|\xi'| \ge 1$ since it contains at least one edge -- its last edge -- from $\La_2$. Therefore $\mu'$ extends $\ep$ nontrivially.
\end{proof}

In light of the preceding proposition, we treat the case where $\mu'$ properly extends $\ep$ separately from the case where $\mu' = \nu$.

\begin{Lemma}\label{lem a acts on vector 1} Suppose $\De = (\de, \ep)$ and $\De' = (\de', \ep') = (\mu'c^{r'}, \nu' d^{s'})$ are as in the statement of Theorem \ref{thm bandwidth},
    and that $\mu'$ properly extends $\ep$. Then for any $G \sse Z_{m'}$, \begin{align*}
        a_{\De} \cdot \eta_{\De',G} = \eta_{\De'', G}
    \end{align*}
    for some $\De'' \in S_{j', \ell'}$.
\end{Lemma}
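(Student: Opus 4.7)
The plan is to compute the product $[\de, \ep, Z_m] \cdot [\de', \ep', G]$ directly in $\mc{G}$, using the equivalence relation $(\mu, \nu, \zeta x) \sim (\mu\zeta, \nu\zeta, x)$ that defines $\mc{G}$ to align the matchings, and then to package the result as $\eta_{\De'', G}$ and verify $\De'' \in S_{j', \ell'}$. Since
\[
a_\De \cdot \eta_{\De', G} = \eta\bigl(\chi_{[\de, \ep, Z_m] \cdot [\de', \ep', G]}\bigr),
\]
the lemma reduces to one bisection computation plus a membership check.

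For the bisection computation, use the hypothesis to write $\mu' = \ep \xi$ with $|\xi| \ge 1$, so that $\de' = \mu' c^{r'} = \ep \xi c^{r'}$. The equivalence gives
\[
[\de, \ep, Z_m] = [\de \xi c^{r'}, \ep\xi c^{r'}, Z_{m'}] = [\de \xi c^{r'}, \de', Z_{m'}],
\]
and since $G \sse Z_{m'}$, composition with $[\de', \ep', G]$ collapses to $[\de \xi c^{r'}, \ep', G]$. Setting $\De'' := (\de \xi c^{r'}, \ep')$, we obtain $a_\De \cdot \eta_{\De', G} = \eta_{\De'', G}$.

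To confirm $\De'' \in S_{j', \ell'}$, decompose $\De'' = (\mu'' \eta'', \nu'' \th'')$ with $\mu'' = \de \xi = \mu \al^r \xi$, $\eta'' = c^{r'}$, $\nu'' = \nu'$, and $\th'' = d^{s'}$. Because $|\de| = |\ep|$ and $\mu' = \ep \xi$, one has $|\mu''| = |\mu'| \le p_{j'}$, while $\mu''_{|\mu''|} = \xi_{|\xi|} = \mu'_{|\mu'|} \in \La_2^1$; hence $\mu'' \in \Phi_{p_{j'}}$. Since $(\mu', \nu') \in S_{j'}$ and $|\mu''| = |\mu'|$, the pair $(\mu'', \nu'')$ is again in $S_{j'}$, and the length equalities $|\mu''\eta''| = |\de'| = |\nu''\th''| = p_{j'} + \ell'$, the membership $\eta'', \th'' \in \La_1$, and the orthogonality $\mf{d}(\eta'') \perp \mf{d}(\th'')$ (which uses $\{c,d\} = \{\al, \be\}$) all transfer directly from $\De' \in S_{j', \ell'}$.

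The only delicate step is the first application of the equivalence relation: the hypothesis that $\mu'$ properly extends $\ep$ is exactly what lets us factor $\de' = \ep \xi c^{r'}$ and thereby reduce the groupoid product to a single bisection whose third coordinate is still $G$. Once that alignment is in place, everything else is routine bookkeeping that mirrors the structure of $\De'$ under the replacement of the first $\mu'$-segment by $\mu \al^r \xi$.
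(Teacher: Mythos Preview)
Your proof is correct and follows essentially the same approach as the paper's: write $\mu' = \ep\xi$ (the paper uses $\zeta$), use the equivalence relation to rewrite $[\de,\ep,Z_m]$ as $[\de\xi c^{r'}, \de', Z_{m'}]$ for the purpose of the product, collapse to $[\de\xi c^{r'}, \ep', G]$, and then verify $(\de\xi, \nu') \in S_{j'}$ via the length equality $|\de\xi| = |\mu'|$ and the observation that the last edge of $\de\xi$ is $\xi_{|\xi|} = \mu'_{|\mu'|} \in \La_2^1$. Your added remark that $\mf{d}(c^{r'}) \perp \mf{d}(d^{s'})$ follows from $\{c,d\} = \{\al,\be\}$ is a small clarification the paper leaves implicit, but otherwise the arguments are identical.
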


\begin{proof}
For a set $G \sse Z_{m'}$,
\begin{align*}
    a_{\De} \cdot \eta_{\De', G} &= \chi_{[\de, \ep, Z_m]} \cdot \eta( \chi_{[\de', \ep', G]})\\
    &= \eta( \chi_{[\de, \ep, Z_m]} * \chi_{[\de', \ep', G]})\\
    &= \eta( \chi_{[\de, \ep, Z_m] \cdot [\de', \ep', G]})
\end{align*}
    Since $\mu'$ is a nontrivial extension of $\ep$, $\mu' = \ep \ze$ for some $|\ze| \ge 1$.
Then $\de' = \mu' c^{r'} = \ep \ze c^{r'}$, and
\begin{align*}
    [\de, \ep, Z_m] \cdot [\de', \ep', G] &= [\de, \ep, Z_m] \cdot [\ep \ze c^{r'}, \ep', G]\\
    &= [\de \ze c^{r'}, \ep \ze c^{r'}, Z_{m'}] \cdot [\ep \ze c^{r'}, \ep', G]\\
    &= [\de \ze c^{r'}, \ep', G].
\end{align*}
Let $\mu'' = \de \ze$. Then $(\mu'', \nu') \in S_{j'}$: first, $\mu''_{|\mu''|} = \ze_{|\ze|} = \mu'_{|\mu'|} \in \La_2^1$. Second, $|\mu''| = |\de| + |\ze| = |\de| + (|\mu'| - |\ep|) = |\mu'|$, because $|\de| = |\ep|$. And $(\mu', \nu') \in S_{j'}$.

Let $\de'' = \mu'' c^{r'}$. Then $\De'' := (\de'', \ep') \in S_{j', \ell'}$. Hence, $a_{\De} \cdot \eta_{\De', G} = \eta(\chi_{[\de'', \ep', G]}) = \eta_{\De'', G}$.
\end{proof}

This is our first result that describes what $a_{\De}$ does to one of the finite dimensional spaces $K_{\De'}(E)_{n+1}$.

\begin{Theorem}\label{thm a acts on K 1}
    Suppose $\De = (\de, \ep)$ and $\De' = (\de', \ep') = (\mu'c^{r'}, \nu' d^{s'})$ are as in the statement of Theorem \ref{thm bandwidth},
    and that $\mu'$ properly extends $\ep$. Let $n \geq m,$ and let $E \in Q_n^{m'}$.  Then
    \begin{align*}
        a_{\De} \cdot K_{\De'}(E)_{n+1} \sse K_{\De''}(E)_{n+1}
    \end{align*}
    for some $\De'' \in S_{j', \ell'}$.
\end{Theorem}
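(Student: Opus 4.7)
The plan is to apply Lemma \ref{lem a acts on vector 1} termwise and then check that the orthogonality condition defining $K_{\De''}(E)_{n+1}$ is automatically inherited from that of $K_{\De'}(E)_{n+1}$. A typical element of $K_{\De'}(E)_{n+1}$ has the form $v = \sum_{F \in E_{n+1}^{m'}} \alpha_F \eta_{\De', F}$ with $\langle v, \eta_{\De', E}\rangle = 0$. Lemma \ref{lem a acts on vector 1} yields $a_{\De} \cdot \eta_{\De', G} = \eta_{\De'', G}$ for every compact open $G \sse Z_{m'}$, where the new pair $\De'' = (\mu'' c^{r'}, \nu' d^{s'}) \in S_{j', \ell'}$ constructed there depends on $\De$ and $\De'$ but \emph{not} on $G$. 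By linearity, $a_{\De} \cdot v = \sum_F \alpha_F \eta_{\De'', F}$ lies in $\spn\{\eta_{\De'', F} : F \in E_{n+1}^{m'}\}$.

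The core of the argument will be establishing the inner product identity
\begin{align*}
    \langle \eta_{\De'', F}, \eta_{\De'', G}\rangle = \langle \eta_{\De', F}, \eta_{\De', G}\rangle
\end{align*}
for all compact open $F, G \sse Z_{m'}$. The key observation is that $\De'$ and $\De''$ share the same second coordinate $\ep'$. Applying \eqref{inner-product-formula}, the left-hand side equals $\mf{m}([\ep', \de'', G] \cdot [\de'', \ep', F] \cap \mc{G}^{(0)})$, and left-cancellation in $\La$ forces the groupoid product to collapse to $[\ep', \ep', F \cap G]$. The same collapse happens for $\De'$ with $\de'$ replacing $\de''$, so both inner products equal $\mf{m}([\ep', \ep', F \cap G] \cap \mc{G}^{(0)})$, an expression depending only on $\ep'$ and $F \cap G$.

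Armed with this identity, the orthogonality follows immediately: $\langle a_{\De} \cdot v, \eta_{\De'', E}\rangle = \sum_F \alpha_F \langle \eta_{\De', F}, \eta_{\De', E}\rangle = \langle v, \eta_{\De', E}\rangle = 0$. Combined with the spanning observation, this shows $a_{\De} \cdot v \in K_{\De''}(E)_{n+1}$ as desired. The main obstacle, if any, is the routine but notationally delicate groupoid computation $[\ep', \de'', G] \cdot [\de'', \ep', F] = [\ep', \ep', F \cap G]$, which mirrors the computation displayed just after \eqref{inner-product-formula} in the preliminaries; once that is in hand the proof is essentially formal, since Lemma \ref{lem a acts on vector 1} has already done the combinatorial work of identifying $\De''$.
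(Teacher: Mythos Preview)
Your proposal is correct and follows essentially the same approach as the paper's proof: both apply Lemma \ref{lem a acts on vector 1} termwise, observe that $\De''$ has the same second coordinate $\ep'$ as $\De'$, and use this to show that the relevant inner products coincide (the paper computes $\langle \eta_{\De', F}, \eta_{\De', E}\rangle = \mf{m}(\ep' F) = \langle \eta_{\De'', F}, \eta_{\De'', E}\rangle$ directly, while you state the slightly more general identity for arbitrary $F,G$, but the content is the same).
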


\begin{proof}
For any $F \in E_{n+1}^{m'}$,
\begin{align*}
    \langle \eta_{\De', F}, \eta_{\De', E} \rangle &= \tau(\chi_{[\de',\ep',E]}^* * \chi_{[\de', \ep', F]})\\
    &= \tau(\chi_{[\ep',\de',E]} * \chi_{[\de', \ep', F]})\\
    &= \tau(\chi_{[\ep',\de',E]\cdot[\de', \ep', F]})\\
    &= \tau(\chi_{[\ep',\ep',E\cap F]})\\
    &= \mf{m}(\ep'(E\cap F))\\
    &= \mf{m}(\ep' F). \stepcounter{equation}\tag{\theequation}\label{eq0 thm a acts on K 1}
\end{align*}
Let $u \in K_{\De'}(E)_{n+1}$. Write $u = \sum_{F \in E^{m'}_{n+1}} c_F \eta_{\De', F}$, where $c_F \in \C$.
By definition of $K_{\De'}(E)_{n+1}$, 
\begin{align*}
    0&= \langle u, \eta_{\De', E}\rangle\\
    &= \sum_{F \in E_{n+1}^{m'}} c_F \langle \eta_{\De', F}, \eta_{\De', E} \rangle\\
    &= \sum_{F \in E_{n+1}^{m'}} c_F \mf{m}(\ep' F) \stepcounter{equation}\tag{\theequation}\label{eq1 thm a acts on K 1}
\end{align*}
By Lemma \ref{lem a acts on vector 1}, since $E$ is a set contained in $Z_{m'}$, $a_{\De} \cdot \eta_{\De',E} = \eta_{\De'', E},$ and $\De'' \in S_{j',\ell'}$. Furthermore,
\begin{align*}
    a_{\De} \cdot u &= \sum_{F \in E^{m'}_{n+1}} c_F a_{\De} \cdot \eta_{\De', F}\\
    &= \sum_{F \in E^{m'}_{n+1}} c_F \eta_{\De'', F}.
\end{align*}
Now, 
\begin{align*}
    \langle a_{\De} \cdot u, \eta_{\De'',E} \rangle &= \sum_{F \in E^{m'}_{n+1}} c_F \langle \eta_{\De'', F}, \eta_{\De'',E}\rangle \\
    &= \sum_{F \in E^{m'}_{n+1}} c_F \tau(\chi_{[\ep', \de'', E]\cdot [\de'', \ep', F]})\\
    &= \sum_{F \in E^{m'}_{n+1}} c_F \tau(\chi_{[\ep', \ep', F]})\\
    &= \sum_{F \in E^{m'}_{n+1}} c_F \mf{m}(\ep' F).
\end{align*}
Thus by \eqref{eq1 thm a acts on K 1}, we have $\langle a_{\De}\cdot u, \eta_{\de'',E} \rangle = 0$. Hence, we have seen that $a_{\De}\cdot u \in \spn\{\eta_{\De'',F} : F \in E_{n+1}^{m'}\}$ and $a_{\De}\cdot u \perp \eta_{\De'', E}$. Therefore, $a_{\De}\cdot u \in K_{\De''}(E)_{n+1}$.
\end{proof}

For the next few results, we look at pairs $(\de,\ep) = (\mu \al^r, \nu \be^s)$ and $(\de',\ep') = (\mu' c^{r'}, \nu' d^{s'})$ where $\mu' = \nu$, and we consider the cases $c=\al$ and $c=\be$ in sequence. First we see what $a_{\De}$ does to a vector of the form $\eta_{\De', G}$ for $G \sse Z_{m'}$.

\begin{Lemma}\label{lem a acts on vector 2}
    Let $j \geq 1, \ell \geq 0, m = m(j,\ell), j' > j, \ell' \ge m,$ and $m' = m(j', \ell')$.
    Suppose $(\de, \ep) = (\mu\al^r, \nu\be^s) \in S_{j,\ell}$,
    and $(\de', \ep') = (\nu c^{r'}, \nu'd^{s'}) \in S_{j', \ell'}$, where $(\mu,\nu) \in S_j$ and $(\nu,\nu') \in S_{j'}$. Let $G \sse Z_{m'}$.

    Then there exist $p$ with $|p - \ell'| \leq m$, $\De'' = (\de'', \ep'') \in S_{j',p}$, and $G' \sse Z_{m(j',p)}$ such that $[\de, \ep, Z_m] \cdot [\de', \ep', G] = [\de'', \ep'', G']$.
    In particular, if $\De = (\de, \ep), \De' = (\de', \ep'),$ and $\De'' = (\de'', \ep'')$, then $a_{\De} \cdot \eta_{\De', G} = \eta_{\De'',G'}.$

    Specifically, 
    \begin{align*}
        p &= \begin{cases}
            \ell' - r &: c = \be\\
            \ell' + s &: c = \al,
        \end{cases}\\
        \De'' &= (\de'', \ep'') = \begin{cases}
            (\mu \be^{r'-s}, \nu' \al^{\ell'-r} ) &: c = \be\\
            (\mu \al^{r+r'}, \nu' \be^{\ell' + s} ) &: c = \al,
        \end{cases}\\
    \end{align*}
    and
    \begin{align*}
        G' &= \begin{cases}
            \al^r G &: c = \be\\
            \si^{\be^s}(G) &: c = \al.
        \end{cases}\\
    \end{align*}
\end{Lemma}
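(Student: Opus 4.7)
The identity to prove factors as $a_\De \cdot \eta_{\De',G} = \eta\bigl(\chi_{[\de,\ep,Z_m]} * \chi_{[\de',\ep',G]}\bigr)$, so the lemma reduces to establishing the bisection identity $[\de,\ep,Z_m] \cdot [\de',\ep',G] = [\de'',\ep'',G']$ for the stated $\De''$ and $G'$. The plan is to unpack the groupoid multiplication pointwise via the rule $[\mu_1,\nu_1,\de_1 x] \cdot [\xi_1,\eta_1,\ep_1 x] = [\mu_1\de_1,\eta_1\ep_1,x]$ (defined when $\nu_1\de_1 = \xi_1\ep_1$), and then use commutativity of $\al$ and $\be$ in $\La_1$ together with the equivalence $[\mu_1\xi,\nu_1\xi,x] = [\mu_1,\nu_1,\xi x]$ to rewrite the answer in the required form. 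A useful preliminary is that $|\nu'| = p_{j'}$ and $s' = \ell'$: since $\mu' = \nu$ satisfies $|\nu| \le p_j < p_{j'}$, the $S_{j'}$ condition forces $|\nu'| = p_{j'}$, so $s' = p_{j'} + \ell' - |\nu'| = \ell'$.

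For $([\de,\ep,y],[\de',\ep',z])$ to be composable one needs $\ep y = \de' z$, which here reduces to $\be^s y = c^{r'} z$. In the case $c = \be$, a short estimate gives $r' = p_{j'} + \ell' - |\nu| \ge \ell' \ge m > s$, so the equation forces $y = \be^{r'-s} z$. The multiplication rule then produces $[\mu\al^r\be^{r'-s}, \nu'\al^{\ell'}, z]$. Using $\al\be = \be\al$ in $\La_1$, rewrite $\mu\al^r\be^{r'-s}$ as $\mu\be^{r'-s}\al^r$, and split $\nu'\al^{\ell'} = (\nu'\al^{\ell'-r})\al^r$ (valid since $\ell' \ge r$); the equivalence with $\xi = \al^r$ yields $[\mu\be^{r'-s}, \nu'\al^{\ell'-r}, \al^r z]$. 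Running $z$ over $G$ gives $[\de'',\ep'',\al^r G]$ with $p = \ell'-r$.

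In the case $c = \al$, the equation $\be^s y = \al^{r'} z$ is solved, using commutativity in $\La_1$, by $y = \al^{r'}\tilde y$ and $z = \be^s\tilde y$, with $\tilde y$ ranging over $\si^{\be^s}(G) \sse Z_{m'+s}$ as $z$ ranges over $G$. The multiplication rule with $\de_1 = \al^{r'}$ and $\ep_1 = \be^s$ then delivers $[\mu\al^{r+r'}, \nu'\be^{\ell'+s}, \si^{\be^s}(G)]$ directly, giving $p = \ell'+s$. In both cases, $\De'' \in S_{j',p}$ follows from direct computation: $|\de''| = |\ep''| = p_{j'}+p$, the base pair $(\mu,\nu')$ lies in $S_{j'}$ because $|\nu'| = p_{j'}$, and the $\La_1$-suffixes $\be^{r'-s},\al^{\ell'-r}$ (respectively $\al^{r+r'},\be^{\ell'+s}$) have orthogonal degrees by construction. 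The distance $|p - \ell'|$ is either $r$ or $s$, both bounded by $p_j + \ell = m - 1 \le m$.

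The only subtle point is the vertex bookkeeping: one must confirm that $\al^r G \sse Z_{m(j',p)}$ in the $c = \be$ case and $\si^{\be^s}(G) \sse Z_{m(j',p)}$ in the $c = \al$ case. Both amount to a one-line identity using $m(j,\ell) = p_j + \ell + 1$, $|\nu'| = p_{j'}$, and the formulas for $r'$ and $s'$, so this does not present a serious obstacle; the rest is careful but routine groupoid manipulation.
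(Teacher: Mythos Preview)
Your proposal is correct and follows essentially the same route as the paper's proof: both establish the preliminary $|\nu'|=p_{j'}$, $s'=\ell'$, then split into the cases $c=\be$ and $c=\al$, compute the groupoid product, and verify $\De''\in S_{j',p}$ and $G'\sse Z_{m(j',p)}$. The only cosmetic difference is that the paper manipulates the bisections $[\de,\ep,Z_m]$ and $[\de',\ep',G]$ directly (extending each to a common middle path $\nu\be^{r'}$ or $\nu\al^{r'}\be^s$ before multiplying), whereas you phrase the same extension pointwise via the composability equation $\be^s y = c^{r'} z$; the resulting intermediate expressions and final formulas are identical.
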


\begin{proof}
First we note that since $(\nu, \nu') \in S_{j',\ell'}$, and $|\nu| \le p_j < p_{j'}$, $|\nu'| = p_{j'}$ and so $s' = \ell'$. 

Suppose $c=\be$. Then
\begin{align*}
    [\de, \ep, Z_{m}]\cdot[\de', \ep', G] &= [\mu \al^r, \nu \be^s, Z_m] \cdot [\nu \be^{r'}, \nu' \al^{\ell'}, G]\\
    &= [\mu\al^r \be^{r'-s}, \nu\be^{r'}, Z_{m'}] \cdot [\nu \be^{r'}, \nu' \al^{\ell'}, G]\\
    &= [\mu\al^r \be^{r'-s}, \nu'\al^{\ell'}, G]\\
    &= [\mu\be^{r'-s}, \nu' \al^{\ell'-r}, \al^rG].
\end{align*}
Since $(\mu, \nu) \in S_j$ and $j \leq j'$, it follows that $|\mu| \leq p_j \leq p_{j'}$, thus $(\mu, \nu') \in S_{j'}$, and $|\nu' \al^{\ell' - r}| = p_{j'}+\ell' - r$.
We compute
\begin{align*}
    |\mu\be^{r'-s}| &= |\mu| + (p_{j'} + \ell' - |\nu|) - (p_j + \ell - |\nu|)\\
    &= (p_j + \ell - r) + p_{j'} + \ell' - p_j - \ell\\
    &= p_{j'} + \ell' - r.
\end{align*}
Hence, $(\mu\be^{r'-s}, \nu' \al^{\ell'-r}) \in S_{j', p}$ where $p = \ell'-r$. And $G' = \al^r G \sse Z_{m'-r}$. Since $m' - r = p_{j'} + \ell' + 1-r = m(j', \ell'-r)$, we have $G' \sse Z_{m(j',p)}$.

Next, suppose $c =\al$. Then
\begin{align*}
    [\de, \ep, Z_{m}]\cdot[\de', \ep', G] &= [\mu\al^r, \nu\be^s, Z_m] \cdot[\nu \al^{r'}, \nu'\be^{\ell'}, G]\\
    &= [\mu\al^{r+r'}, \nu\al^{r'} \be^s, Z_{m'+s}] \cdot [\nu\al^{r'}\be^s, \nu' \be^{\ell'+s}, \si^{\be^s}(G)]\\
    &= [\mu\al^{r+r'}, \nu'\be^{\ell'+s}, \si^{\be^s}(G)].
\end{align*}
As before, $(\mu, \nu') \in S_{j'}$, and $|\nu'\be^{\ell'+s}| = p_{j'} + \ell' + s$. We compute
\begin{align*}
    |\mu\al^{r+r'}| &= |\mu| + r + r'\\
    &= (p_j + \ell -r) + r + (p_{j'}+\ell' - |\nu|)\\
    &= p_j + \ell - |\nu| + p_{j'} +\ell'\\
    &= p_{j'} +  \ell' + s.
\end{align*}
So $(\mu\al^{r+r'}, \nu'\be^{\ell'+s}) \in S_{j', p}$ where $p= \ell'+s$. And $G' = \si^{\be^s}(G) \sse Z_{m'+s}$. Since $m'+s = p_{j'}+\ell' + 1 + s = m(j', \ell'+s)$, we have that $G' \sse Z_{m(j',p)}$.
\end{proof}

The next two lemmas and  subsequent corollary describe what follows if (borrowing the notation from Lemma \ref{lem a acts on vector 2}) $E'$ refines a given partition $Q_c^{(\cdot)}$.

\begin{Lemma}\label{lem if E' refines (c=beta)}
    Let $j, \ell, m, j', \ell', m',$ and $\De$ be as in Lemma \ref{lem a acts on vector 2}. Let $\De'=(\de', \ep') = (\nu \be^{r'}, \nu' \al^{\ell'}) \in S_{j', \ell'}$. Let $\de'' = \mu \be^{r'-s},$ $\ep''= \nu' \al^{\ell'-r},$ $p = \ell'-r$, and $\De''=(\de'', \ep'') \in S_{j', p}$, and let $m'' = m(j', p)$.

    Let $n \geq m$, fix $E \in Q_n^{m'}$, and let $u \in K_{\De}(E)_{n+1}$. If $\al^r E$ refines $Q_c^{m''}$, then for all $G \in Q^{m''}_c$, $a_{\De} \cdot u \perp \eta_{\De'', G}$.
\end{Lemma}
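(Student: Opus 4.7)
The plan is to expand $u$ in the basis $\{\eta_{\De', F} : F \in E_{n+1}^{m'}\}$, push the computation through $a_\De$ using Lemma \ref{lem a acts on vector 2}, and then exploit the ``refinement'' hypothesis to reduce the inner product $\langle a_\De \cdot u, \eta_{\De'', G}\rangle$ to a single sum which already vanishes because $u \perp \eta_{\De', E}$.

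Concretely, write $u = \sum_{F \in E_{n+1}^{m'}} c_F \eta_{\De', F}$. By Lemma \ref{lem a acts on vector 2} (the $c = \be$ case), $a_\De \cdot \eta_{\De', F} = \eta_{\De'', \al^r F}$ for each $F \in E_{n+1}^{m'}$, so $a_\De \cdot u = \sum_{F} c_F \eta_{\De'', \al^r F}$. Fix $G \in Q_c^{m''}$. Using the inner product formula \eqref{inner-product-formula}, as in equation \eqref{eq0 thm a acts on K 1}, we get
\begin{equation*}
    \langle a_\De \cdot u, \eta_{\De'', G} \rangle = \sum_{F \in E_{n+1}^{m'}} c_F \, \mf{m}\bigl(\ep''(\al^r F \cap G)\bigr).
\end{equation*}

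By hypothesis, there is a unique $G_0 \in Q_c^{m''}$ with $\al^r E \sse G_0$. For $G \ne G_0$, the disjointness of the partition $Q_c^{m''}$ forces $\al^r F \cap G \sse G_0 \cap G = \emp$ for every $F \in E_{n+1}^{m'}$, so each term in the sum vanishes. For $G = G_0$, we instead have $\al^r F \cap G_0 = \al^r F$, and the key algebraic identity
\begin{equation*}
    \ep'' \al^r \;=\; \nu' \al^{\ell' - r} \al^r \;=\; \nu' \al^{\ell'} \;=\; \ep'
\end{equation*}
gives $\mf{m}(\ep'' \al^r F) = \mf{m}(\ep' F)$, and hence
\begin{equation*}
    \langle a_\De \cdot u, \eta_{\De'', G_0} \rangle = \sum_{F \in E_{n+1}^{m'}} c_F \, \mf{m}(\ep' F).
\end{equation*}

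Finally, this last sum equals $\langle u, \eta_{\De', E}\rangle$ by equation \eqref{eq0 thm a acts on K 1} applied to each $F$ (and the fact that $E_{n+1}^{m'}$ is a partition of $E$, so $\mf{m}(\ep'(F \cap E)) = \mf{m}(\ep' F)$). But $u \in K_{\De'}(E)_{n+1} \perp \eta_{\De', E}$ by definition of $K_{\De'}(E)_{n+1}$, so this inner product is zero. This completes the proof. The only potential obstacle is making sure the measure identity $\mf{m}(\ep'' \al^r F) = \mf{m}(\ep' F)$ is handled cleanly, but this is immediate from the definitions of $\ep'$ and $\ep''$ once one notices that the extra $\al^r$ prefix simply reconstitutes $\ep'$ from $\ep''$.
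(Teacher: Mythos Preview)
Your proof is correct and follows essentially the same approach as the paper's: expand $u$ over $E_{n+1}^{m'}$, apply Lemma~\ref{lem a acts on vector 2} in the $c=\be$ case to get $a_\De \cdot u = \sum_F c_F \eta_{\De'',\al^r F}$, split into the cases $G \cap \al^r E = \emp$ versus $\al^r E \sse G$, and in the latter case use the identity $\ep''\al^r = \nu'\al^{\ell'-r}\al^r = \nu'\al^{\ell'} = \ep'$ to reduce to $\sum_F c_F \mf{m}(\ep' F) = \langle u,\eta_{\De',E}\rangle = 0$. The only cosmetic difference is that you phrase the case split via a unique $G_0$ containing $\al^r E$, whereas the paper treats an arbitrary $G$ and branches on whether $E'\cap G$ is empty.
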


\begin{proof}
    For any $A \sse Z_{m'}$, let $A' = \al^r A$. 

    Say $u = \sum_{F \in E_{n+1}^{m'}} c_F \eta_{\De', F}$, where $c_F \in \C$ for each $F$. If $F \in E_{n+1}^{m'}$ then $F \sse E$, and so $F'\sse E'$. Note that Lemma \ref{lem a acts on vector 2} implies $a_{\De} \cdot u = \sum_{F \in E_{n+1}^{m'}} c_F \eta_{\De'', F'}$, and  $F' \neq \emp$ for all $F \in E_{n+1}^{m'}$, so the indices and coefficients are the same in both sums.
    
    Let $G \in Q_c^{m''}$. If $E' \cap G = \emp$ then for any $F \in E_{n+1}^{m'}$, $F' \cap G = \emp$ since $F' \sse E'$. So, $\eta_{\De'', F'} \perp \eta_{\De'', G}$, hence $a_{\De} \cdot u \perp \eta_{\De'', G}$.

    Otherwise, $E' \sse G$, so for any $F \in E_{n+1}^{m'}$, $F' \sse E' \sse G$. Hence, $F' \cap G = F' \cap E' = F'$, and so 
    \begin{align*}
        \langle \eta_{\De'', F'}, \eta_{\De'', G} \rangle &= \langle \eta_{\De'', F'}, \eta_{\De'', E'}\rangle\\
        &= \tau(\chi_{[\ep'', \de'', E']\cdot[\de'', \ep'', F']})\\
        &= \tau(\chi_{[\ep'', \ep'', F']})\\
        &= \mf{m}(\ep'' F').
    \end{align*}
    Then $\ep'' F' = \nu' \al^{\ell'-r} \al^r F = \nu' \al^{\ell'} F = \ep'F$. Thus for any $F \in E_{n+1}^{m'}$, \[\langle \eta_{\De'', F'}, \eta_{\De'', G}\rangle = \langle \eta_{\De'', F'}, \eta_{\De'', E'} \rangle = \mf{m}(\ep'F).\]
    Since $u \perp \eta_{\De', E}$ by definition of $K_{\De'}(E)_{n+1}$,
    \begin{align*}
        0 = \langle u, \eta_{\De',E}\rangle = \sum_{F \in E_{n+1}^{m'}} c_F \langle \eta_{\De', F}, \eta_{\De', E}\rangle = \sum_{F \in E_{n+1}^{m'}} c_F \tau(\chi_{[\ep', \ep', F]}) = \sum_{F \in E_{n+1}^{m'}} c_F \mf{m}(\ep'F).
    \end{align*}
    Hence,
    \begin{align*}
        \langle a_{\De} \cdot u, \eta_{\De'', G}\rangle = \sum_{F \in E_{n+1}^{m'}} c_F \langle\eta_{\De'', F'}, \eta_{\De'', G} \rangle = \sum_{F \in E_{n+1}^{m'}} c_F \mf{m}(\ep'F) = 0,
    \end{align*}
    and therefore $a_{\De} \cdot u \perp \eta_{\De'', G}$.
\end{proof}

\begin{Lemma}\label{lem if E' refines (c=alpha)}
    Let $j, \ell, m, j', \ell', m',$ and $\De$ be as in Lemma \ref{lem a acts on vector 2}. Let $\De'=(\de', \ep') = (\nu \al^{r'}, \nu' \be^{\ell'}) \in S_{j', \ell'}$. Let $\de'' = \mu \al^{r'+r}, \ep''= \nu' \be^{\ell'+s},$ $p = \ell'-r$, and $\De''=(\de'', \ep'') \in S_{j', p}$, and let $m'' = m(j',p)$.

    Let $n \geq m$, fix $E \in Q_n^{m'}$, and let $u \in K_{\De}(E)_{n+1}$. If $\si^{\be^s} (E)$ refines $Q_c^{m''}$, then for all $G \in Q^{m''}_c$, $a_{\De} \cdot u \perp \eta_{\De'', G}$.
\end{Lemma}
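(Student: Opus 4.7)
The plan is to mirror the proof of Lemma \ref{lem if E' refines (c=beta)}, replacing the prepending operation $A \mapsto \al^r A$ by the shift operation $A \mapsto \si^{\be^s}(A)$. First I would write $u = \sum_{F \in E_{n+1}^{m'}} c_F \eta_{\De', F}$ and invoke the $c=\al$ case of Lemma \ref{lem a acts on vector 2} to obtain $a_{\De} \cdot u = \sum_F c_F \eta_{\De'', F'}$, where $F' := \si^{\be^s}(F)$. A direct computation along the lines of \eqref{eq0 thm a acts on K 1} then converts the hypothesis $u \perp \eta_{\De', E}$ into the scalar orthogonality $\sum_F c_F \mf{m}(\ep' F) = 0$.

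The crucial step is the measure identity $\mf{m}(\ep'' F') = \mf{m}(\ep' F)$ for each $F \sse E$. When $F \sse Z(\be^s)$ this is in fact a literal equality of sets: $\be^s \si^{\be^s}(F) = F$ gives $\ep'' F' = \nu'\be^{\ell'+s}\si^{\be^s}(F) = \nu'\be^{\ell'}F = \ep' F$. To secure this hypothesis for every $F$ occurring in the sum, I would establish the dichotomy that either $E \sse Z(\be^s)$ or $E \cap Z(\be^s) = \emp$. Using the structure of $Q_n^{m'}$ from Definitions \ref{def Pmn} and \ref{def Qmn}, every $E \in Q_n^{m'}$ has the form $\mu P$ with $\mu \in \Phi_n^{m'}$ and $P \in P_{m'+|\mu|,\,n-|\mu|}$; the first $\La_1$-block of any infinite path in $E$ is entirely determined by the data $(\mu, P)$, so membership in $Z(\be^s)$ depends only on whether this block contains at least $s$ copies of $\be$. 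In the second alternative, $\si^{\be^s}(F) = \emp$ for every $F \sse E$, whence $a_{\De} \cdot u = 0$ and the conclusion is immediate. I expect this structural dichotomy, which rests on the $\al\be$-commutation in $\La_1$ together with the blocking behavior of $\La_2$-edges (the ``closed gates''), to be the main technical obstacle; a careful case analysis through $P_{m', n}^{(1)}$--$P_{m', n}^{(4)}$ together with a normal-form argument for $\mu$ when $|\mu| \ge 1$ should handle it.

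With the identity secured, the endgame parallels the $c = \be$ case exactly. For fixed $G \in Q_c^{m''}$, the refinement hypothesis that $\si^{\be^s}(E)$ refines $Q_c^{m''}$ forces either $E' \cap G = \emp$ --- in which case every $F' \sse E'$ satisfies $F' \cap G = \emp$ and so $\langle \eta_{\De'', F'}, \eta_{\De'', G}\rangle = 0$ throughout --- or $E' \sse G$, in which case $F' \sse G$ and $\langle \eta_{\De'', F'}, \eta_{\De'', G}\rangle = \mf{m}(\ep'' F') = \mf{m}(\ep' F)$. Summing against $c_F$ and applying $\sum_F c_F \mf{m}(\ep' F) = 0$ yields $\langle a_{\De} \cdot u, \eta_{\De'', G}\rangle = 0$, as required.
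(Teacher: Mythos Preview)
Your approach is correct and matches the paper's proof almost line for line: the paper writes $u = \sum_F c_F \eta_{\De',F}$, applies the $c=\al$ case of Lemma \ref{lem a acts on vector 2}, computes $\ep'' F' = \nu'\be^{\ell'+s}\si^{\be^s}(F) = \nu'\be^{\ell'}F = \ep' F$, and finishes with the same two-case endgame on $G$.

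The one place you are more careful than the paper is the dichotomy $E \sse Z_{m'}(\be^s)$ versus $E \cap Z_{m'}(\be^s) = \emp$; the paper simply asserts ``$F' \neq \emp$ for all $F$'' and uses $\be^s\si^{\be^s}(F) = F$ without comment. You are right that this step needs the dichotomy, but the case analysis through $P^{(1)}$--$P^{(4)}$ you anticipate is unnecessary. Since $s = p_j + \ell - |\nu| < p_j + \ell + 1 = m \le n$, the refinement property stated after Definition \ref{def Qmn} (namely that $Q_n^{m'}$ refines every $Z_{m'}(\nu)$ with $|\nu| \le n$) already gives that $Q_n^{m'}$ refines $Z_{m'}(\be^s)$, so every $E \in Q_n^{m'}$ is either contained in or disjoint from $Z_{m'}(\be^s)$. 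In the disjoint case $a_\De \cdot u = 0$ as you note; in the contained case every $F \sse E \sse Z_{m'}(\be^s)$ and the identity $\be^s\si^{\be^s}(F) = F$ holds on the nose.
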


\begin{proof}
    For any $A \sse Z_{m'}$, let $A' = \si^{\be^s}(A)$. 

    Say $u = \sum_{F \in E_{n+1}^{m'}} c_F \eta_{\De', F}$, where $c_F \in \C$ for each $F$. If $F \in E_{n+1}^{m'}$ then $F \sse E$, and so $F'\sse E'$. Note that Lemma \ref{lem a acts on vector 2} implies $a_{\De} \cdot u = \sum_{F \in E_{n+1}^{m'}} c_F \eta_{\De'', F'}$, and  $F' \neq \emp$ for all $F \in E_{n+1}^{m'}$, so the indices and coefficients are the same in both sums.
    
    Let $G \in Q_c^{m''}$. If $E' \cap G = \emp$ then for any $F \in E_{n+1}^{m'}$, $F' \cap G=\emp$ since $F' \sse E'$. So, $\eta_{\De'', F'} \perp \eta_{\De'', G}$, hence $a_{\De} \cdot u \perp \eta_{\De'', G}$.

    Otherwise, $E' \sse G$, so for any $F \in E_{n+1}^{m'}$, $F' \sse E' \sse G$. Hence, $F' \cap G = F' \cap E' = F'$, and so 
    \begin{align*}
        \langle \eta_{\De'', F'}, \eta_{\De'', G} \rangle &= \langle \eta_{\De'', F'}, \eta_{\De'', E'}\rangle\\
        &= \tau(\chi_{[\ep'', \de'', E']\cdot[\de'', \ep'', F']})\\
        &= \tau(\chi_{[\ep'', \ep'', F']})\\
        &= \mf{m}(\ep'' F').
    \end{align*}
    Then $\ep'' F' = \nu' \be^{\ell'+s} \si^{\be^{s}}(F) = \nu' \be^{\ell'} F = \ep'F$. Thus for any $F \in E_{n+1}^{m'}$, \[\langle \eta_{\De'', F'}, \eta_{\De'', G}\rangle = \langle \eta_{\De'', F'}, \eta_{\De'', E'} \rangle = \mf{m}(\ep'F).\]
    Since $u \perp \eta_{\De', E}$ by definition of $K_{\De'}(E)_{n+1}$,
    \begin{align*}
        0 = \langle u, \eta_{\De',E}\rangle = \sum_{F \in E_{n+1}^{m'}} c_F \langle \eta_{\De', F}, \eta_{\De', E}\rangle = \sum_{F \in E_{n+1}^{m'}} c_F \tau(\chi_{[\ep', \ep', F]}) = \sum_{F \in E_{n+1}^{m'}} c_F \mf{m}(\ep'F).
    \end{align*}
    Hence,
    \begin{align*}
        \langle a_{\De} \cdot u, \eta_{\De'', G}\rangle = \sum_{F \in E_{n+1}^{m'}} c_F \langle\eta_{\De'', F'}, \eta_{\De'', G} \rangle = \sum_{F \in E_{n+1}^{m'}} c_F \mf{m}(\ep'F) = 0,
    \end{align*}
    and therefore $a_{\De} \cdot u \perp \eta_{\De'', G}$.
\end{proof}

\begin{Corollary}\label{coro if E' refines}
Let $j \geq 1, \ell \geq 0, m = m(j,\ell), j' \geq j, \ell' \ge m,$ and $m' = m(j', \ell')$.
    Suppose $(\de, \ep) = (\mu\al^r, \nu\be^s) $ with $|\de| = |\ep| = p_j + \ell,$ and $(\de', \ep') = (\nu c^{r'}, \nu'd^{s'}) \in S_{j', \ell'}$, where $(\mu,\nu) \in S_j$ and $(\nu,\nu') \in S_{j'}$. 

    Let $n \geq m$, fix $E \in Q_n^{m'}$, and let $u \in K_{\De}(E)_{n+1}$. Let $p$, $\De''$, and $E'$ be as in the conclusion of Proposition \ref{lem a acts on vector 2} for $G=E$, so that $a_{\De} \cdot \eta_{\De', E} = \eta_{\De'', E'}$, and $\De''\in S_{j',p}$. Let $m'' = m(j', p)$.
    
    If $E'$ refines $Q_k^{m''}$, then $$a_{\De} K_{\De'}(E)_{n+1} \perp \bigoplus_{t < k} \bigoplus_{A \in Q_t^{m''}} K_{\De''}(A)_{t+1}.$$
\end{Corollary}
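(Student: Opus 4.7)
The plan is to deduce this directly from Lemmas \ref{lem if E' refines (c=beta)} and \ref{lem if E' refines (c=alpha)} by propagating their conclusion across every partition $Q^{m''}_{t+1}$ with $-1 \le t < k$, exploiting the nested refinement structure of the family $\{Q^{m''}_i\}_{i \ge -1}$. The hard work has already been done in those two lemmas; what remains is bookkeeping.

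First, I would show that $E'$ refines $Q^{m''}_{t+1}$ for every $t$ with $-1 \le t < k$. By Lemma \ref{lem Q_n+1 refines Q_n}, iterated, $Q^{m''}_k$ refines $Q^{m''}_{t+1}$ whenever $t+1 \le k$. Since $E'$ is by hypothesis contained in some element of $Q^{m''}_k$, which in turn sits inside some element of $Q^{m''}_{t+1}$, it follows that $E' \sse F$ for some $F \in Q^{m''}_{t+1}$. Depending on whether $c = \be$ or $c = \al$, I would then apply Lemma \ref{lem if E' refines (c=beta)} or Lemma \ref{lem if E' refines (c=alpha)} with its parameter $c$ replaced by $t+1$, to conclude that for every $u \in K_{\De'}(E)_{n+1}$ and every $G \in Q^{m''}_{t+1}$ one has $a_\De \cdot u \perp \eta_{\De'', G}$.

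To finish, I would unpack Definition \ref{def K spaces}: for $A \in Q^{m''}_t$, the subspace $K_{\De''}(A)_{t+1}$ sits inside $\spn\{\eta_{\De'', F} : F \in A^{m''}_{t+1}\}$, and $A^{m''}_{t+1} = A \cap Q^{m''}_{t+1}$ is a subset of $Q^{m''}_{t+1}$. Therefore $a_\De \cdot u$ is orthogonal to every $K_{\De''}(A)_{t+1}$ with $A \in Q^{m''}_t$, for each $-1 \le t < k$, and hence to the full direct sum on the right-hand side. Since $u$ was arbitrary in $K_{\De'}(E)_{n+1}$, this yields the desired orthogonality. The only mild subtlety is keeping the subscripts on $\De, \De', \De''$ and on $m, m', m''$ aligned with those in the preceding lemmas; since those lemmas were stated in precisely this notation, no substantive new calculation is required.
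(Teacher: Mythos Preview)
Your proposal is correct and follows essentially the same route as the paper: apply Lemmas~\ref{lem if E' refines (c=beta)} and~\ref{lem if E' refines (c=alpha)} at each level $t+1 \le k$ (using that $E'$ refines $Q^{m''}_{t+1}$ by the nested refinement from Lemma~\ref{lem Q_n+1 refines Q_n}), then observe that each $K_{\De''}(A)_{t+1}$ is spanned by vectors $\eta_{\De'',G}$ with $G \in Q^{m''}_{t+1}$. The paper's proof is organized slightly differently (it treats the top level $t=k-1$ first and then remarks that the same works for all $t<k$), but the substance is identical.
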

\begin{proof}
    For any $A \in Q_{k-1}^{m''}$, $K_{\De''}(A)_k \sse \spn\{\eta_{\De'',B} : B \in Q_k^{m''}\}$, and by Lemmas \ref{lem if E' refines (c=beta)} and \ref{lem if E' refines (c=alpha)}, $a_{\De} u \perp \eta_{\De'',B}$ for any $B \in Q_k^{m''}$. Hence, $a_{\De} u \perp K_{\De''}(A)_k$ for any $A \in Q_{k-1}^{m''}$. So, $$a_{\De} u \perp \bigoplus_{A \in Q_{k-1}^{m''} } K_{\De''}(A)_k.$$
    
    In fact, since $E'$ refines $Q_k^{m''}$, it also refines $Q_t^{m''}$ for all $t \leq k$, so 
    $$a_{\De} u \perp \bigoplus_{t<k} \bigoplus_{A \in Q_{t}^{m''} } K_{\De''}(A)_{t+1}.$$
    
    Since $u \in K_{\De'}(E)_{n+1}$ was arbitrary, we have that
    $$a_{\De} K_{\De'}(E)_{n+1} \perp \bigoplus_{t<k} \bigoplus_{A \in Q_{t}^{m''} } K_{\De''}(A)_{t+1}.$$
\end{proof}

 The next results,  Propositions \ref{prop E' refines (c=beta)} and \ref{prop E' refines (c=alpha)}, say what partition $E'$ refines under different conditions. First we address the case wehre $c = \beta$, and then the case where $c = \al$.

\begin{Proposition}\label{prop E' refines (c=beta)}
    Let $j \geq 1, \ell \geq 0, m = m(j,\ell), j' \geq j, \ell' \ge m,$ and $m' = m(j', \ell')$. Let $\De = (\de, \ep) =(\mu\al^r, \nu\be^s) $ with $|\de| = |\ep| = p_j + \ell,$ and $\De' = (\de', \ep') = (\nu\be^{r'}, \nu' \al^{s'}) \in S_{j',\ell'}$, where $(\mu, \nu) \in S_j$ and $(\nu, \nu') \in S_{j'}$. Let $n \geq m$ and fix $E \in Q_n^{m'}$. Then $E' = \al^r E$ refines $Q_{n-m}^{m'-r}$.
\end{Proposition}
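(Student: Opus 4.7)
The plan is to write $E = \phi_0 B_0$ using the definition of $Q_n^{m'}$ (with $\phi_0 \in \Phi_n^{m'}$ and $B_0 \in P_{m'+|\phi_0|,\, n-|\phi_0|}$) and split the argument on whether $|\phi_0| \ge 1$ or $|\phi_0| = 0$.

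In the case $|\phi_0| \ge 1$, the last edge of $\alpha^r\phi_0$ equals the last edge of $\phi_0$, which lies in $\Lambda_2^1$, so $\alpha^r\phi_0 \in \Phi_{r+|\phi_0|}^{m'-r}$. I would apply Lemma \ref{lem equiv of Qs} to $\theta = \alpha^r\phi_0$ with $t = n - |\phi_0| \ge 0$, obtaining
\[
  \alpha^r\phi_0\, Q_{n-|\phi_0|}^{m'+|\phi_0|} \;=\; Z(\alpha^r\phi_0) \cap Q_{n+r}^{m'-r}.
\]
Since $B_0$ is a single element of $Q_{n-|\phi_0|}^{m'+|\phi_0|}$, this identifies $\alpha^r E = \alpha^r\phi_0 B_0$ as a single element of $Q_{n+r}^{m'-r}$. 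Because $n+r \ge n-m$, iterated application of Lemma \ref{lem Q_n+1 refines Q_n} then shows that $\alpha^r E$ sits inside an element of $Q_{n-m}^{m'-r}$, finishing this case.

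When $|\phi_0| = 0$ we have $E = B_0 \in P_{m',n}$, and Lemma \ref{lem equiv of Qs} does not apply because $\alpha^r$ does not end in a $\Lambda_2^1$ edge. I would argue directly by a case analysis on which of $P^{(1)},\dots,P^{(4)}$ the element $B_0$ belongs to. In each type the commutativity $\alpha\beta=\beta\alpha$ in $\Lambda_1$ (for instance $\alpha^{n+r+1}\beta^j = \alpha^{n-m+1}\beta^j\alpha^{m+r}$) lets one rewrite $\alpha^r B_0$ and display it inside a specific element of $P_{m'-r,\, n-m}$; which type of $P$-element serves as the target is governed by whether the (shifted) $\alpha$-exponent and the $\beta$-exponent exceed $n-m$, with both small leaving us in the natural analogue type and one or both large shifting us to $P^{(1)}$, $P^{(2)}$, or $P^{(4)}$.

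The main obstacle is the bookkeeping in this second case: for each $P^{(k)}$ there are several subcases, and in each one must verify both that paths in $\alpha^r B_0$ extend the intended target prefix and that they fail to extend the corresponding forbidden one-longer prefix. These verifications are routine uses of $\Lambda_1$-commutativity, but there are many of them; I expect the $P^{(3)}$ subcase to be the most delicate, since the trailing $\Lambda_2^1$-edge of $B_0$ must be carried intact through the rewriting.
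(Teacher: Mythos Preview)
Your plan is correct and matches the paper's proof: the same split into $|\phi_0|\ge 1$ versus $E\in P_{m',n}$, followed by a case analysis over $P^{(1)},\dots,P^{(4)}$. Two minor organizational remarks. First, in the $|\phi_0|\ge 1$ case the paper does not invoke Lemma~\ref{lem equiv of Qs}; it simply observes that $\alpha^r\phi_0\in\Phi_{n+r}^{m'-r}$ and $B_0\in P_{(m'-r)+|\alpha^r\phi_0|,\,(n+r)-|\alpha^r\phi_0|}$, so $\alpha^r E\in Q_{n+r}^{m'-r}$ straight from the definition. Second, in the $P^{(k)}$ case analysis the paper aims for containment in an element of $Q_n^{m'-r}$ rather than $P_{m'-r,\,n-m}$, and only at the end uses $n-m\le n$ to pass to $Q_{n-m}^{m'-r}$; this keeps the thresholds at $n$ instead of $n-m$ and cuts down the number of subcases (in particular $P^{(3)}$ becomes just two subcases rather than four, so it is not especially delicate).
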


\begin{proof}
    If $E = \th R$ for some $\th \in \Phi_n^{m'}$ with $|\th| \geq 1$ and $R \in P_{m'+|\th|, n-|\th|}$, then $\al^r \th \in \Phi_{n+r}^{m'-r}$, and so $\al^rE \in Q_{n+r}^{m'-r}$. Since $Q_{n+r}^{m'-r}$ refines $Q_{n}^{m'-r}$, $\al^rE$ refines $Q_{n}^{m'-r}$.
    Otherwise, $E \in P_{m',n}$.

$P^{(1)}_{m',n}$:
If $E =Z_{m'}(\al^{n+1}\be^J)\sm Z_{m'}(\al^{n+1}\be^{J+1}) \in P^{(1)}_{m',n}$ for some $J \leq n$, then 
\begin{align*}
    E' = \al^rE = Z_{m'-r}(\al^{n+r+1}\be^J)\sm Z_{m'-r}(\al^{n+1+1}\be^{J+1})
\end{align*}
and $J \leq r+n$, so $E' \in P^{(1)}_{m'-r, n+r} \sse Q_{n+r}^{m'-r}$. Since $n+r \geq n,$ this refines $Q_{n}^{m'-r}$.

$P^{(2)}_{m',n}$: If $E =Z_{m'}(\al^{I}\be^{n+1})\sm Z_{m'}(\al^{I+1}\be^{n+1}) \in P^{(2)}_{m',n}$ for some $I \leq n$, then either $I+r \leq n$, or $I+r > n$.

If $I+r \leq n$, then $E'\in P^{(2)}_{m'-r, n}$, hence $E'$ refines $Q_n^{m'-r}$.

If $I+r > n$, then 
\begin{align*}
    E' &= Z_{m'-r}(\al^{I+r}\be^{n+1})\sm Z_{m'-r}(\al^{I+r+1}\be^{n+1})\\
    &\sse Z_{m'-r}(\al^{n+1}\be^{n+1})\\
    &\in P_{m'-r, n}^{(4)}.
\end{align*} And so $E'$ refines $Q_n^{m'-r}$.

$P^{(3)}_{m',n}$: If $E =Z_{m'}(\al^{I}\be^{J}\la) \in P^{(3)}_{m',n}$ for some $I, J \leq n \leq I + J$, and $\la \in \La_2^1$ with $r(\la) = v_{m' + I+J}$, then:

If $I+r \leq n$, then $E' \in P_{m'-r,n}^{(3)}$.

If $I+r > n$, then $E' \in P^{(3)}_{m'-r, I+r}$, so $E'$ refines $Q^{m'-r}_{I+r}$. Since $I+r > n$, this refines $Q^{m'-r}_{n}$.

$P^{(4)}_{m',n}$: If $E =Z_{m'}(\al^{n+1}\be^{n+1}) \in P^{(4)}_{m',n}$, then \begin{align*}
    E' &= Z_{m'-r}(\al^{r+n+1}\be^{n+1} \\
    &\sse Z_{m'-r}(\al^{n+1}\be^{n+1}) \\
    &\in P^{(4)}_{m'-r, n}.
\end{align*} So $E'$ refines $Q_n^{m'-r}$.

In all cases, $E'$ refines $Q_{n}^{m'-r}.$ Since $n -m \leq n$, this implies that $E'$ refines $Q_{n-m}^{m'-r}$.
\end{proof}

\begin{Proposition}\label{prop E' refines (c=alpha)}
    Let $j \geq 1, \ell \geq 0, m = m(j,\ell), j' \geq j, \ell' \ge m,$ and $m' = m(j', \ell')$. Let $\De = (\de, \ep) =(\mu\al^r, \nu\be^s) $ with $|\de| = |\ep| = p_j + \ell,$ and $\De' = (\de', \ep') = (\nu\al^{r'}, \nu' \be^{s'}) \in S_{j',\ell'}$, where $(\mu, \nu) \in S_j$ and $(\nu, \nu') \in S_{j'}$. Let $n \geq m$ and fix $E \in Q_n^{m'}$. Then $E' = \si^{\be^s}(E)$ refines $Q_{n-m}^{m'+s}$.
\end{Proposition}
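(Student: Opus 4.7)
The plan is to mirror the proof of Proposition \ref{prop E' refines (c=beta)}, writing $E = \th R$ with $\th \in \Phi_n^{m'}$ and $R \in P_{m'+|\th|, n-|\th|}$, and case-analyzing based on $\th$'s initial structure. The central difference from the $c=\be$ case is that $\si^{\be^s}$ is a left-stripping operation: $\si^{\be^s}(E)$ is empty whenever $E \cap \be^s \La = \emp$, and when nonempty it is computed using the commutation of $\al$ and $\be$ within $\La_1$-blocks. A crucial bound used throughout is $s = p_j + \ell - |\nu| \le p_j + \ell = m - 1 < m$.

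In the case $|\th| \ge 1$, if $\th$'s first edge lies in $\La_2$ or $\th$'s first $\La_1$-block contains fewer than $s$ copies of $\be$, then $\si^{\be^s}(E) = \emp$, which trivially refines. Otherwise, writing $\th = \al^a \be^b \th''$ with $\th''$ starting in $\La_2$ and $b \ge s$, commutation in $\La_1$ yields $\si^{\be^s}(E) = \th^{(s)} R$ where $\th^{(s)} := \al^a \be^{b-s} \th''$. Since the last edge of $\th^{(s)}$ remains in $\La_2^1$ and $|\th^{(s)}| = |\th| - s \le n - s$, we have $\th^{(s)} \in \Phi_{n-s}^{m'+s}$, so $\th^{(s)} R$ is an element of $Q_{n-s}^{m'+s}$ by Definition \ref{def Qmn}. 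Iterating Lemma \ref{lem Q_n+1 refines Q_n} with $s \le m-1$ yields that $Q_{n-s}^{m'+s}$ refines $Q_{n-m}^{m'+s}$.

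In the case $|\th| = 0$, so $E \in P_{m',n}$, I handle the four subcases in parallel with the $c=\be$ proof. For $E \in P^{(1)}_{m',n}$, $\si^{\be^s}(E)$ lands in $P^{(1)}_{m'+s, n}$ if $s \le J$ and is empty otherwise. For $E \in P^{(2)}_{m',n}$, the strip equals $Z_{m'+s}(\al^I \be^{n+1-s}) \sm Z_{m'+s}(\al^{I+1}\be^{n+1-s})$, which lies in $P^{(2)}_{m'+s, n-s}$ if $I \le n-s$ and otherwise is contained in $Z_{m'+s}(\al^{n-s+1}\be^{n-s+1}) \in P^{(4)}_{m'+s, n-s}$. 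For $E \in P^{(4)}_{m',n}$, the strip is contained in $Z_{m'+s}(\al^{n-m+1}\be^{n-m+1}) \in P^{(4)}_{m'+s, n-m}$.

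The main obstacle is the $P^{(3)}$ subcase, where $\si^{\be^s}(E) = Z_{m'+s}(\al^I \be^{J-s}\la)$ (when $s \le J$) could in principle be too shallow to fit inside a single element of $Q_{n-m}^{m'+s}$. However, the key inequality $I + J \ge n$ combined with $s \le m - 1$ gives $I + (J-s) \ge n - s \ge n - m + 1 > n - m$, so the cylinder is always deep enough. A further sub-split based on whether $I$ and $J-s$ individually exceed $n-m$ then places $\si^{\be^s}(E)$ inside an element of one of $P^{(1)}_{m'+s, n-m}$, $P^{(2)}_{m'+s, n-m}$, $P^{(3)}_{m'+s, n-m}$, or $P^{(4)}_{m'+s, n-m}$, completing the proof.
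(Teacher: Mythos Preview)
Your proof is correct and follows essentially the same case analysis as the paper's: handle $|\th|\ge 1$ by stripping $\be^s$ from the initial $\La_1$-block of $\th$, and for $E\in P_{m',n}$ treat the four families separately. The only organizational difference is that the paper shows in every subcase that $E'$ refines $Q_{n-s}^{m'+s}$ and then invokes $s\le m$ once at the end, whereas in $P^{(3)}$ and $P^{(4)}$ you target $Q_{n-m}^{m'+s}$ directly (using the slightly sharper bound $s\le m-1$ and a four-way sub-split in $P^{(3)}$); both routes work.
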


\begin{proof}
    First we note that if $E \cap Z_{m'}(\be^s) = \emp$ then $E' = \emp$, which refines $Q_c^{m'+s}$ for all $c$. 

    If $E = \th R$ for some $\th \in \Phi_{n}^{m'}$ with $|\th| \geq 1$ and $R \in P_{m'+|\th|, n-|\th|}$, and $\th \La \cap \be^s \La = \emp$, then $E' = \emp$. 
    Otherwise, $\th \Cap \be^s$ and $E \sse Z_{m'}(\be^s)$. Since $\th_{|\th|} \in \La_2^1$, $\th$ must properly extend $\be^s$, so $\th = \be^s \th'$. Then $\th' \in \Phi_{n-s}^{m'+s}$, and $E' = \th' R \in Q_{n-s}^{m'+s}$. 

    Now suppose $E \in P_{m', n}$. Let $n' = n-s$ and $m''=m'+s$.

    $P^{(1)}_{m',n}$: If $E = Z_{m'}(\al^{n+1}\be^{J})\sm Z_{m'}(\al^{n+1}\be^{J+1}) \in P_{m',n}^{(1)}$ for $J < s$ then $E' = \emp$. So, suppose $E = Z_{m'}(\al^{n+1}\be^{J+s})\sm Z_{m'}(\al^{n+1}\be^{J+s+1}) \in P_{m',n}^{(1)}$ for some $J \leq n'$. Then $E' = Z_{m''}(\al^{n+1}\be^J)\sm Z_{m''}(\al^{n+1}\be^{J+1}) \in P_{m'',n}^{(1)}$. Hence $E'$ refines $Q_{n'}^{m''}$.

    $P^{(2)}_{m',n}$: If $E = Z_{m'}(\al^{I}\be^{n+1})\sm Z_{m'}(\al^{I+1}\be^{n+1}) \in P_{m',n}^{(2)}$ for some $I \leq n$, then $E'= Z_{m''}(\al^{I}\be^{n'+1})\sm Z_{m''}(\al^{I+1}\be^{n'+1})$.

    If $I \leq n'$, then $E' \in Q_{n'}^{m''}$, so $E'$ refines $Q_{n'}^{m''}$.

    If $I \geq n'+1$, then 
    \begin{align*}
        E' &\sse Z_{m''}(\al^{n'+1}\be^{n'+1})\\
        &\in P^{(4)}_{m'',n'}\\
        &\sse Q_{n'}^{m''}.
    \end{align*}
    So $E'$ refines $Q_{n'}^{m''}$.

    $P^{(3)}_{m',n}$: If If $E = Z_{m'}(\al^{I}\be^{J}\la) \in P_{m',n}^{(3)}$ where $J < s$, then $E' = \emp$. So we may assume that $E = Z_{m'}(\al^{I}\be^{J+s}\la) \in P_{m',n}^{(3)}$ for some $I, J$ such that $I, (J+s) \leq n \leq I+J+s$, and some $\la \in \La_2^1$ with $r(\la) = v_{m'+I+J+s}$. Then $E' = Z_{m''}(\al^I\be^J \la)$.

    If $I \leq n'$, then $I, J \leq n' \leq I+J$ and so $E' \in P_{m'',n'}^{(3)}$.

    If $I > n'$, then $J \leq I$ and so $E' \in P^{(3)}_{m'',I} \sse Q_I^{m''}$. Since $I > n'$, this refines $Q_{n'}^{m''}$.

    $P^{(4)}_{m',n}$: If $E = Z_{m'}(\al^{n+1}\be^{n+1})\in P_{m',n}^{(4)}$, then $E' = Z_{m''}(\al^{n+1} \be^{n'+1})$. So $E' \sse Z_{m''}(\al^{n'+1}\be^{n'+1}) \in P_{m'',n'}^{(4)}$. Hence $E'$ refines $Q_{n'}^{m''}$.

    So in all cases, $E'$ refines $Q_{n'}^{m''}$. Recall that $s = p_j + \ell - |\nu| \leq m$, so $n'= n-s \geq n-m$. Thus $Q_{n-s}^{m''}$ refines $Q_{n-m}^{m''}$.
\end{proof}

\begin{Theorem}\label{thm a acts on K 2}
Let $j \geq 1, \ell \geq 0, m = m(j,\ell), j' \geq j, \ell' \ge m,$ and $m' = m(j', \ell')$. Suppose $\De = (\de, \ep) = (\mu\al^r, \nu\be^s) $ with $|\de| = |\ep| = p_j + \ell,$ and $\De' =(\de', \ep') = (\nu c^{r'}, \nu'd^{s'}) \in S_{j', \ell'}$, where $(\mu,\nu) \in S_j$ and $(\nu,\nu') \in S_{j'}$. Let $n \geq 0$, and fix $E \in Q_n^{m'}$. Let

    \begin{tabular}{l l}
    $m'' = \begin{cases}
        m'-r &: c=\be\\m'+s &: c=\al,
    \end{cases}$
    & $\De'' = \begin{cases}
        (\mu \be^{r'-s}, \nu'\al^{\ell'-r}) &: c=\be\\ (\mu\al^{r+r'}, \nu'\be^{s+\ell'}) &: c=\al.
    \end{cases}$\\
\end{tabular}

Then \[ a_{\De} \cdot K_{\De'}(E)_{n+1} \perp \bigoplus_{t < n-m} \bigoplus_{A \in Q_{t}^{m''}} K_{\De''}(A)_{t+1}.\]
\end{Theorem}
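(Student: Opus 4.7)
The plan is to reduce this theorem directly to Corollary \ref{coro if E' refines} together with the two refinement propositions that precede it, treating $c = \be$ and $c = \al$ as separate cases and verifying in each that the parameter $m''$ in the statement of Theorem \ref{thm a acts on K 2} agrees with the $m''$ appearing in the hypotheses of the corollary.

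First I would invoke Lemma \ref{lem a acts on vector 2} to identify the data produced by acting by $a_{\De}$ on $\eta_{\De', E}$. When $c = \be$, this gives $E' = \al^r E$, $\De'' = (\mu \be^{r'-s}, \nu'\al^{\ell'-r})$, and $m'' = m(j', \ell'-r) = m'-r$. When $c = \al$, it gives $E' = \si^{\be^s}(E)$, $\De'' = (\mu\al^{r+r'}, \nu'\be^{s+\ell'})$, and $m'' = m(j', \ell'+s) = m'+s$. In both cases, the $m''$ and $\De''$ match those in the statement.

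Next I would apply the appropriate refinement statement: Proposition \ref{prop E' refines (c=beta)} in the case $c = \be$ shows that $E' = \al^r E$ refines $Q_{n-m}^{m'-r} = Q_{n-m}^{m''}$, and Proposition \ref{prop E' refines (c=alpha)} in the case $c = \al$ shows that $E' = \si^{\be^s}(E)$ refines $Q_{n-m}^{m'+s} = Q_{n-m}^{m''}$. Thus in both cases $E'$ refines $Q_{n-m}^{m''}$.

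Finally, Corollary \ref{coro if E' refines} with $k = n-m$ immediately yields
\[
a_{\De} \cdot K_{\De'}(E)_{n+1} \perp \bigoplus_{t < n-m} \bigoplus_{A \in Q_t^{m''}} K_{\De''}(A)_{t+1},
\]
which is exactly the conclusion of the theorem. The main obstacle is essentially bookkeeping --- making sure the formulas for $m''$ and $\De''$ produced by Lemma \ref{lem a acts on vector 2} are correctly identified with the ones in the statement, and that the hypothesis $n \ge 0$ here is compatible with the hypothesis $n \ge m$ used in the preceding lemmas (indeed, if $n < m$ then $n - m < 0$ and the right-hand direct sum is empty, so the conclusion is vacuous and we may safely assume $n \ge m$).
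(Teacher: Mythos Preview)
Your proposal is correct and follows essentially the same route as the paper's own proof, which simply cites Lemmas \ref{lem if E' refines (c=beta)} and \ref{lem if E' refines (c=alpha)}, Corollary \ref{coro if E' refines}, and Propositions \ref{prop E' refines (c=beta)} and \ref{prop E' refines (c=alpha)}. Your extra care in invoking Lemma \ref{lem a acts on vector 2} to match up $m''$ and $\De''$, and in disposing of the case $n < m$ as vacuous so that the hypothesis $n \ge m$ of the refinement propositions is available, is sound and fills in bookkeeping the paper leaves implicit.
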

\begin{proof}
    This follows from Lemmas \ref{lem if E' refines (c=beta)} and \ref{lem if E' refines (c=alpha)}, Corollary \ref{coro if E' refines}, and Propositions \ref{prop E' refines (c=beta)} and \ref{prop E' refines (c=alpha)}.
\end{proof}

Results \ref{lem if F is refined by 1} through \ref{coro if F is refined by 3} set us up for the refinement arguments found in Propositions \ref{prop F is refined by (c=beta)} and \ref{prop F is refined by (c=alpha)}. This set of results parallels the results from Lemma \ref{lem if E' refines (c=beta)} through Proposition \ref{prop E' refines (c=alpha)}.

\begin{Lemma}\label{lem if F is refined by 1}
    Let $j \geq 1, \ell \geq 0, m = m(j,\ell)$, and $F \sse Z_{m}$. Suppose $F$ is refined by $Q_k^m.$ Let $A \in Q_k^m$ with $A \sse F$, and let $w \in \spn\{\eta_{\De,B} : B \in A \cap Q_{k+1}^m \}$. Then $\langle w, \eta_{\De, F}\rangle = \langle w, \eta_{\De,A}\rangle$.
\end{Lemma}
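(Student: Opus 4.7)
The plan is to reduce the claim to a vector-by-vector verification on a spanning set, then compute the relevant inner products using the formula \eqref{inner-product-formula} for $\tau$ on products of characteristic functions of basic bisections.

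First I would write $w = \sum_{B \in A \cap Q_{k+1}^m} c_B \, \eta_{\De, B}$ with $c_B \in \C$, which is possible by the hypothesis on $w$. By linearity of the inner product in the first argument, it suffices to prove that
\begin{equation*}
\langle \eta_{\De, B}, \eta_{\De, F} \rangle = \langle \eta_{\De, B}, \eta_{\De, A} \rangle
\end{equation*}
for every $B \in A \cap Q_{k+1}^m$. Since $Q_{k+1}^m$ refines $Q_k^m$ by Lemma \ref{lem Q_n+1 refines Q_n}, each such $B$ satisfies $B \sse A$; and since $A \sse F$ by hypothesis, we also have $B \sse F$.

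Next I would apply \eqref{inner-product-formula} with $\De = (\de, \ep)$ to both sides. Writing out the convolution as in that formula,
\begin{align*}
\langle \eta_{\De, B}, \eta_{\De, F} \rangle &= \tau(\chi_{[\ep, \de, F] \cdot [\de, \ep, B]}) = \tau(\chi_{[\ep, \ep, F \cap B]}) = \mf{m}(\ep(F \cap B)),\\
\langle \eta_{\De, B}, \eta_{\De, A} \rangle &= \tau(\chi_{[\ep, \de, A] \cdot [\de, \ep, B]}) = \tau(\chi_{[\ep, \ep, A \cap B]}) = \mf{m}(\ep(A \cap B)),
\end{align*}
where in the last equality of each line I use that $[\ep,\ep, \cdot] \cap \mc{G}^{(0)}$ is identified with $\ep(\cdot) \sse X_1$, as discussed following \eqref{inner-product-formula}. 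Since $B \sse A \sse F$, both $F \cap B$ and $A \cap B$ equal $B$, so each inner product equals $\mf{m}(\ep B)$.

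There is essentially no obstacle here; the lemma is bookkeeping built on top of the inner-product formula and the fact that $Q_{k+1}^m$ refines $Q_k^m$. The only subtlety worth flagging is to be careful that each $B$ really does lie entirely in $A$ (so that $A \cap B = B$), which is where the refinement hypothesis is used. Summing $c_B \, \mf{m}(\ep B)$ over $B \in A \cap Q_{k+1}^m$ then yields the equality $\langle w, \eta_{\De, F}\rangle = \langle w, \eta_{\De, A}\rangle$.
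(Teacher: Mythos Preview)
Your proof is correct and follows the same approach as the paper: reduce to a single $B \in A \cap Q_{k+1}^m$, observe $B \sse A \sse F$ so $B \cap F = B \cap A$, and conclude the inner products agree. The paper's version is terser (it skips the explicit $\mf{m}(\ep B)$ computation), and note that $B \sse A$ is actually immediate from the definition of the notation $A \cap Q_{k+1}^m$ rather than requiring the refinement lemma.
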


\begin{proof}
    If $B \in A\cap Q_{k+1}^m$, then $B \sse A \sse F$, so $B \cap F = B\cap A$. Thus $\langle \eta_{\De,B}, \eta_{\De,F} \rangle = \langle \eta_{\De, B}, \eta_{\De,A}\rangle$, and so $\langle w, \eta_{\De, F}\rangle = \langle w, \eta_{\De,A}\rangle$.
\end{proof}

\begin{Corollary}\label{coro if F is refined by 2}
    Let $j \geq 1, \ell \geq 0, m = m(j,\ell)$, and $F \sse Z_{m}$. If $F$ is refined by $Q_k^m$, then for any $\De \in S_{j,\ell}$,
    \[ \eta_{\De,F} \perp \bigoplus_{t \geq k} \bigoplus_{A \in Q_t^m} K_{\De}(A)_{t+1}.\]
\end{Corollary}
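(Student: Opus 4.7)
The plan is to establish the orthogonality one direct summand at a time: for each fixed $t \geq k$ and each $A \in Q_t^m$, I show $\eta_{\De, F} \perp K_{\De}(A)_{t+1}$, and then take the direct sum over $A$ and $t$ to obtain the stated conclusion.

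The first ingredient is that $Q_t^m$ refines $Q_k^m$ for every $t \geq k$. This follows by iterating Lemma \ref{lem Q_n+1 refines Q_n}. Since $F$ is a union of elements of $Q_k^m$ (the meaning of ``$F$ is refined by $Q_k^m$''), it is therefore also a union of elements of $Q_t^m$. Consequently, for every $A \in Q_t^m$ we have the dichotomy: either $A \sse F$ or $A \cap F = \emp$.

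Now fix $t \geq k$, $A \in Q_t^m$, and an arbitrary $w \in K_{\De}(A)_{t+1}$. In the case $A \sse F$, apply Lemma \ref{lem if F is refined by 1} with $k$ replaced by $t$ (which is legitimate because $F$ is refined by $Q_t^m$): this gives $\langle w, \eta_{\De, F}\rangle = \langle w, \eta_{\De, A}\rangle$, and the right-hand side vanishes since $K_{\De}(A)_{t+1} \sse \{\eta_{\De, A}\}^{\perp}$ by Definition \ref{def K spaces}. In the case $A \cap F = \emp$, write $w = \sum_{B \in A \cap Q_{t+1}^m} c_B \eta_{\De, B}$; each $B$ is contained in $A$ and hence disjoint from $F$, so by the inner-product formula \eqref{inner-product-formula} we have $\langle \eta_{\De, B}, \eta_{\De, F}\rangle = \mf{m}(\ep(B \cap F)) = 0$, and linearity yields $\langle w, \eta_{\De, F}\rangle = 0$.

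Since $w \in K_{\De}(A)_{t+1}$ was arbitrary, $\eta_{\De, F} \perp K_{\De}(A)_{t+1}$, and summing over all $A \in Q_t^m$ and all $t \geq k$ gives the desired orthogonality. There is no real obstacle here beyond careful bookkeeping; the key structural fact is the refinement relation $Q_t^m \succeq Q_k^m$ for $t \geq k$, which reduces everything to a single application of Lemma \ref{lem if F is refined by 1}.
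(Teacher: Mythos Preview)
Your proof is correct and follows essentially the same approach as the paper's: both use the refinement $Q_t^m \succeq Q_k^m$ for $t \geq k$ to reduce to the dichotomy $A \sse F$ or $A \cap F = \emp$, invoke Lemma~\ref{lem if F is refined by 1} in the first case, and use disjointness of $B$ and $F$ in the second. The only cosmetic difference is that the paper first treats the case $t = k$ and then extends to $t \geq k$ via refinement, whereas you establish the refinement upfront and work directly at arbitrary level $t$.
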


\begin{proof}
    Let $A \in Q_k^m$, and let $w \in K_{\De}(A)_{k+1}$. If $A \sse F$ then $\langle \eta_{\De,F}, w\rangle = \langle \eta_{\De,A}, w\rangle$ by Lemma \ref{lem if F is refined by 1}, and $\langle \eta_{\De,A}, w\rangle = 0$ by definition of $K_{\De}(A)_{k+1}$.

    Otherwise, $F \cap A = \emp$, and so for all $B \in A \cap Q_{k+1}^m$, $F \cap B=\emp$. Thus $\langle \eta_{\De, F}, \eta_{\De, B} \rangle = 0$ and so $\langle\eta_{\De,F}, w\rangle = 0$.

    So we see that if $Q_k^m$ refines $F \sse Z_m$, then $\eta_{\De,F} \perp K_{\De}(A)_{k+1}$ for all $A \in Q_k^m$, hence $\eta_{\De,F} \perp \bigoplus_{A \in Q_k^m} K_{\De}(A)_{k+1}$. Furthermore, for all $t \ge k$, $Q_t^m$ refines $Q_k^m$ and so refines $F$, and it follows that 
    \[\eta_{\De,F} \perp \bigoplus_{t \geq k} \bigoplus_{A \in Q_t^m} K_{\De}(A)_{t+1}.\]
\end{proof}

\begin{Corollary}\label{coro if F is refined by 3}
    Let $j \geq 1, \ell \geq 0, m = m(j,\ell)$, and $F \sse Z_{m}$. If $\mc{F}$ is a finite collection of subsets of $Z_m$ and $\mc{F}$ is refined by $Q_k^m$, and $v \in \spn\{\eta_{\De,F}: F \in \mc{F}\}$ for some $\De \in S_{j,\ell}$, then
    \[ v \perp \bigoplus_{t \geq k} \bigoplus_{A \in Q_t^m} K_{\De}(A)_{t+1}.\]
\end{Corollary}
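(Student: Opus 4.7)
The proof plan is essentially a direct linearity argument building on Corollary \ref{coro if F is refined by 2}. The key observation is that the set of vectors orthogonal to a given subspace forms a closed linear subspace, so orthogonality is preserved under finite linear combinations.

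First I would write $v = \sum_{F \in \mc{F}} c_F \eta_{\De, F}$ for some scalars $c_F \in \C$, using the definition of $v \in \spn\{\eta_{\De, F} : F \in \mc{F}\}$. Then, since $\mc{F}$ is refined by $Q_k^m$ by hypothesis, each individual $F \in \mc{F}$ is refined by $Q_k^m$ (this is the definition: a partition refines a collection if it refines each set in the collection).

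Applying Corollary \ref{coro if F is refined by 2} to each $F \in \mc{F}$ individually, we obtain
\[
\eta_{\De, F} \perp \bigoplus_{t \ge k} \bigoplus_{A \in Q_t^m} K_{\De}(A)_{t+1}
\]
for every $F \in \mc{F}$. The orthogonal complement of the subspace on the right is itself a subspace of $H$, hence closed under linear combinations, so $v = \sum_{F \in \mc{F}} c_F \eta_{\De, F}$ lies in this orthogonal complement as well, which is precisely the desired conclusion.

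There is no real obstacle here; this corollary is a purely formal extension of Corollary \ref{coro if F is refined by 2} from a single set $F$ to a finite linear combination indexed by $\mc{F}$. The only point requiring mild attention is making explicit that ``$\mc{F}$ is refined by $Q_k^m$'' entails refinement of each constituent set, so that Corollary \ref{coro if F is refined by 2} applies term by term in the sum.
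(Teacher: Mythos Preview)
Your proposal is correct and matches the paper's approach exactly: the paper's proof consists solely of the sentence ``This follows from Corollary \ref{coro if F is refined by 2},'' and your argument spells out precisely the linearity reasoning that makes this follow.
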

\begin{proof}
    This follows from Corollary \ref{coro if F is refined by 2}.
\end{proof}

The next two propositions state what partition refines the collection $\mc{F} = \{ F' : F \in E_{n+1}^{m'} \}$, where $F'$ is either $\al^r F$ in the case $c = \be$ (Prop. \ref{prop F is refined by (c=beta)}) or $\si^{\be^s} (F)$  in the case $c = \al$ (Prop. \ref{prop F is refined by (c=alpha)}).

\begin{Proposition}\label{prop F is refined by (c=beta)}
    Let $j \geq 1, \ell \geq 0, m = m(j,\ell), j' \geq j, \ell' \ge m,$ and $m' = m(j', \ell')$. Let $\De = (\de, \ep) =(\mu\al^r, \nu\be^s) $ with $|\de| = |\ep| = p_j + \ell$, and $\De' = (\de', \ep') = (\nu\be^{r'}, \nu' \al^{s'}) \in S_{j',\ell'}$, where $(\mu, \nu) \in S_j$ and $(\nu, \nu') \in S_{j'}$. Let $n \geq 0$ and fix $E \in Q_n^{m'}$.  Then the collection $\mc{F} = \{\al^r F : F \in E_{n+1}^{m'} \}$ is refined by $Q_{n+m}^{m'-r}$.
\end{Proposition}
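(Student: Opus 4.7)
The plan is to mirror the case analysis in the proof of Proposition \ref{prop E' refines (c=beta)}, but with two twists: first, $F$ is drawn from $Q_{n+1}^{m'}$ rather than $Q_n^{m'}$; second, and more importantly, the conclusion is dual --- we must show $\al^r F$ is a union of elements of the finer partition $Q_{n+m}^{m'-r}$, rather than being contained in a single element of a coarser one. The key numerical input is that $(\mu,\nu) \in S_j$ with $|\mu| \le p_j$ forces $r = p_j + \ell - |\mu| \le p_j + \ell = m-1$, hence $n+1+r \le n+m$. Combined with iteration of Lemma \ref{lem Q_n+1 refines Q_n}, this means that whenever $\al^r F$ can be shown to lie in $Q_k^{m'-r}$ for some $k \le n+m$, it is automatically a union of elements of $Q_{n+m}^{m'-r}$, since any atom of a coarser partition is a finite disjoint union of atoms of the finer one.

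I would proceed by cases on the form of $F \in Q_{n+1}^{m'}$. If $F = \th R$ with $\th \in \Phi_{n+1}^{m'}$ and $|\th| \ge 1$, then prepending $\al^r$ preserves the requirement that the last edge of $\th$ lies in $\La_2^1$, so $\al^r\th \in \Phi_{n+1+r}^{m'-r}$ and $\al^r F = (\al^r\th)R \in Q_{n+1+r}^{m'-r}$, completing this case. Otherwise $F \in P_{m', n+1}$, and I would split into subcases according to the type $P^{(1)}, P^{(2)}, P^{(3)}, P^{(4)}$, computing $\al^r F$ explicitly as in the proof of Proposition \ref{prop E' refines (c=beta)}. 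For $F \in P^{(1)}_{m', n+1}$, for $F \in P^{(3)}_{m', n+1}$, and for the branch of $P^{(2)}_{m', n+1}$ with $I+r \le n+1$, an arithmetic check places $\al^r F$ in some $P^{(k)}_{m'-r, k'}$ with $k' \le n+m$, and the conclusion follows from the observation above.

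The main obstacle will be the two subcases in which $\al^r F$ does not fit any standard $P^{(k)}$ form at a level $\le n+m$, namely $P^{(2)}_{m', n+1}$ with $I+r > n+1$ and $P^{(4)}_{m', n+1}$. In these, $\al^r F$ is only a proper subset of a coarser atom, so the easy argument above fails. To handle them I would verify directly that each atom $\mu' Q$ of $Q_{n+m}^{m'-r}$ is either entirely contained in $\al^r F$ or entirely disjoint from it, by characterizing membership in $\al^r F$ through the structure of the first $\La_1$ piece of an infinite path in normal form, specifically the number of $\al$- and $\be$-edges it contains. The inequality $n+1+r \le n+m$ ensures that the cylinder constraints cutting out $\al^r F$ are coarse enough at level $n+m$ to be respected by every atom of $Q_{n+m}^{m'-r}$, which yields the desired refinement.
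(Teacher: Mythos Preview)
Your proposal is correct and would go through, but it diverges from the paper's proof in two respects worth noting.

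First, organization: the paper fixes $E \in Q_n^{m'}$ and, case by case on the form of $E$, enumerates the elements of $E_{n+1}^{m'}$; you instead range directly over all $F \in Q_{n+1}^{m'}$, ignoring which $E$ they came from. Your version proves a slightly stronger statement (refinement for every $F \in Q_{n+1}^{m'}$, not just those in a given $E_{n+1}^{m'}$) and avoids some repetition, since the same $F$-type can appear under several $E$-types in the paper's layout. The paper's organization, on the other hand, occasionally exploits the constraint inherited from $E$ (for instance $I \le n$ rather than $I \le n+1$ in the $P^{(2)}$ case), though this never matters for the final bound.

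Second, and more substantively, for the two subcases you flag as obstacles --- $F \in P^{(2)}_{m',n+1}$ with $I+r > n+1$, and $F \in P^{(4)}_{m',n+1}$ --- the paper does not argue indirectly by testing atoms of $Q_{n+m}^{m''}$ against $\al^r F$. Instead it gives a short explicit decomposition: in each case $\al^r F$ is a rectangular cylinder $Z_{m''}(\al^p\be^q)$ or the difference of two adjacent ones, and slicing along the $\be$-coordinate writes it as a finite union of sets in $P^{(1)}_{m'',n'+1} \cup P^{(2)}_{m'',I+r} \cup P^{(3)}_{m'',\cdot} \cup P^{(4)}_{m'',n'+1}$, all at levels $\le n+m$. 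This is two lines of arithmetic per subcase, in the spirit of Lemma~\ref{lem Q_n+1 refines Q_n}, and is considerably shorter than the atom-by-atom verification you outline, which would itself require revisiting all four $P^{(k)}$ types plus the $|\th| \ge 1$ case at level $n+m$. Your route works, but the explicit decomposition is the more economical one here.
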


\begin{proof}
    Let $m'' = m'-r$ and $n' = n+r$. For any $G \sse Z_{m'}$, let $G' = \al^r G$.
    
    If $E \in Q^{m'}_n \sm P_{m', n}$, so $E = \th R$ for some $\th \in \Phi_n^{m'}$ with $|\th| \ge 1$, and some $R \in P_{m'+|\th|, n-|\th|}$, then $E \cap Q_{n+1}^{m'} \sse Z(\th) \cap Q_{n+1}^{m'} = \th Q_{n+1 - |\th|}^{m'+|\th|}$. And since $\th \in \Phi_n^{m'}$, $\al^r \th \in \Phi_{n'}^{m''}$. Hence, $\mc{F} = \al^r (E^{m'}_{n+1}) \sse \al^r \th Q_{n+1 -|\th|}^{m'+|\th|} = Z(\al^r\th) \cap Q_{n'+1}^{m''}$. In other words, $\mc{F}$ is refined by $Q_{n'+1}^{m''}$.

    Otherwise, $E \in P_{m',n}$. This breaks down into several cases, each of which may also involve several cases...

    \textbf{Case 1:} $E = Z_{m'}(\al^{n+1}\be^J) \sm Z_{m'}(\al^{n+1}\be^{J+1}) \in P_{m',n}^{(1)}$ for some $J \leq n$. Then
    \begin{align*}
        E \cap (Q^{m'}_{n+1} \sm P_{m', n+1}) &= \emp\\
        E \cap P^{(1)}_{m',n+1} &= \{ Z_{m'}(\al^{n+2}\be^J) \sm Z_{m'}(\al^{n+2} \be^{J+1}) \}\\
        E \cap P^{(2)}_{m',n+1} &= \emp\\
        E \cap P^{(3)}_{m',n+1} &= \{ Z_{m'}(\al^{n+1}\be^J \la) : \la \in \La_2^1\} \\
        E \cap P^{(4)}_{m',n+1} &= \emp
    \end{align*}
    \begin{enumerate}
        \item If $F \in P^{(1)}_{m',n+1}$, then $F' = Z_{m''}(\al^{n'+2}\be^{J}) \sm Z_{m''}(\al^{n'+2}\be^{J+1}) \in P^{(1)}_{m'', n' + 1}$.

    \item If $F \in P^{(3)}_{m',n+1}$, then $F' = Z_{m''}(\al^{n'+1}\be^{J} \la) \in P^{(3)}_{m'', n' + 1}$.
    \end{enumerate}
    Thus $\mc{F}$ is refined by $Q^{m''}_{n'+1}$, which is refined by $Q^{m''}_{n+m}$.

    \textbf{Case 2:} $E = Z_{m'}(\al^{I}\be^{n+1}) \sm Z_{m'}(\al^{I+1}\be^{n+1}) \in P_{m',n}^{(2)}$ for some $I \leq n$. Then
    \begin{align*}
        E \cap (Q^{m'}_{n+1} \sm P_{m', n+1}) &= \emp\\
        E \cap P^{(1)}_{m',n+1} &= \emp \\
        E \cap P^{(2)}_{m',n+1} &= \{ Z_{m'}(\al^{I}\be^{n+2}) \sm Z_{m'}(\al^{I+1} \be^{n+2}) \}\\
        E \cap P^{(3)}_{m',n+1} &= \{ Z_{m'}(\al^{I}\be^{n+1} \la) : \la \in \La_2^1\} \\
        E \cap P^{(4)}_{m',n+1} &= \emp
    \end{align*}
    \begin{enumerate}
        \item If $F \in P^{(2)}_{m',n+1}$, then $F' = Z_{m''}(\al^{I+r}\be^{n+2})\sm Z_{m''}(\al^{I+r+1}\be^{n+2})$.
    \begin{enumerate}
        \item If $I+r \leq n+1$, then $F' \in P^{(2)}_{m'', n+1}$.
        \item Else, $n+1 < I+r$, and \begin{align*}
            F' = \left(\bigcup_{h=n+2}^{I+r} \bigcup_{\la \in \La_2^1} Z_{m''}(\al^{I+r}\be^h \la) \right) \cup Z_{m''}(\al^{I+r} \be^{I+r+1}) \sm Z_{m''}(\al^{I+r+1}\be^{I+r+1}).
        \end{align*}
        For $n+2 \leq h \leq I+r$ and $\la \in \La_2^1$, $Z_{m''}(\al^{I+r}\be^h \la) \in P^{(3)}_{m'', I+r}$. And $Z_{m''}(\al^{I+r} \be^{I+r+1}) \sm Z_{m''}(\al^{I+r+1}\be^{I+r+1}) \in P^{(2)}_{m'', I+r}$. Hence, $F'$ is refined by $Q_{I+r}^{m''}$. Since $I \leq n$ and $r \leq m$, $I+r \leq n+m$, so $F'$ is refined by $Q^{m''}_{n+m}$.
    \end{enumerate}

    \item If $F \in P^{(3)}_{m',n+1}$, then $F' = Z_{m''}(\al^{I+r}\be^{n+1}\la)$.
    \begin{enumerate}
        \item If $I+r \leq n+1$, $\al^r F \in P^{(3)}_{m'', n+1}$, which is refined by $Q_{n+m}^{m''}$.
        
        \item Otherwise, $F' \in P_{m'', I+r}^{(3)}$, and $I+r \leq n+m$, so $F'$ is refined by $Q_{n+m}^{m''}$.
    \end{enumerate}
    \end{enumerate}
    Thus $\mc{F}$ is refined by $Q^{m''}_{n+m}$.

    \textbf{Case 3:} If $E = Z_{m'}(\al^I \be^J \la) \in P^{(3)}_{m',n}$ for some $I, J \leq n \leq I+J$ and $\la \in \La_2^1$, let $\ze = \al^I \be^J \la$. Then
    \begin{align*}
        E \cap (Q^{m'}_{n+1} \sm P_{m', n+1}) &= \begin{cases}
            \emp &: I+J \geq n+1\\
            \ze P_{m'+n+1, 0} &: I+J=n
        \end{cases}\\
        E \cap P^{(1)}_{m',n+1} &= \emp\\
        E \cap P^{(2)}_{m',n+1} &= \emp\\
        E \cap P^{(4)}_{m',n+1} &= \emp \\
        E \cap P^{(3)}_{m',n+1} &= \begin{cases}
            \{E\} &: I+J \geq n+1\\ \emp &: I+J = n.
        \end{cases}\\
    \end{align*}
    \begin{enumerate}
        \item First suppose $I+J \ge n+1$. Then if $F \in E_{n+1}^{m'}$, $F = E = Z_{m'}(\al^I \be^J \la)$. So, $F' = Z_{m''}(\al^{I+r}\be^J \la)$. If $I+r \le n+1$, then $F' \in P^{(3)}_{m'', n+1}$, refined by $Q_{n+1}^{m''}$. If $n+1 < I+r$, then $F' \in P^{(3)}_{m'',I+r}$, and so is refined by $Q^{m''}_{I+r}$. Since $I+r \leq n+m$, $F'$ is refined by $Q_{n+m}^{m''}$.

        \item Suppose $I+J = n$. Then if $F \in E_{n+1}^{m'}$, $F \in \ze P_{m'+n+1, 0}$. Since $|\ze| = n + 1$, $\ze \in \Phi_{n+1}^{m'}$, and $\al^r \ze \in \Phi_{n'+1}^{m''}$. Hence, $\al^r \ze P_{m'+n+1,0} \sse Q_{n'+1}^{m''}$, and so $\mc{F}$ is refined by $Q_{n'+1}^{m''}$. Since $n'+1 \leq n+m$, $F'$ is refined by $Q_{n+m}^{m''}$.
    \end{enumerate}
    Therefore, $\mc{F}$ is refined by $Q_{n+m}^{m''}$.

    \textbf{Case 4:} $E = Z_{m'}(\al^{n+1}\be^{n+1}) \in P_{m',n}^{(4)}$. Then
    \begin{align*}
        E \cap (Q^{m'}_{n+1} \sm P_{m', n+1}) &= \emp\\
        E \cap P^{(1)}_{m',n+1} &= \{Z_{m'}(\al^{n+2}\be^{n+1}) \sm Z_{m'}(\al^{n+2}\be^{n+2})\} \\
        E \cap P^{(2)}_{m',n+1} &= \{ Z_{m'}(\al^{n+1}\be^{n+2}) \sm Z_{m'}(\al^{n+2} \be^{n+2}) \}\\
        E \cap P^{(3)}_{m',n+1} &= \{ Z_{m'}(\al^{n+1}\be^{n+1} \la) : \la \in \La_2^1\} \\
        E \cap P^{(4)}_{m',n+1} &= \{Z_{m'}(\al^{n+2}\be^{n+2})\}
    \end{align*}
    \begin{enumerate}
        \item If $F \in P^{(1)}_{m',n+1}$: \begin{align*}
            F' &= Z_{m''}(\al^{n'+2} \be^{n+1})\sm Z_{m''}(\al^{n'+2} \be^{n+2})\\
            &\in P^{(1)}_{m'', n'+1},
        \end{align*}
        so $F'$ is refined by $Q_{n'+1}^{m''}$.

        \item If $F \in P^{(2)}_{m',n+1}$:
        \begin{align*}
            F' &= Z_{m''}(\al^{n'+1}\be^{n+2}) \sm Z_{m''}(\al^{n'+2} \be^{n+2})\\
            &= \left(\bigcup_{h=n+2}^{n'+1} \bigcup_{\la \in \La_2^1} Z_{m''}(\al^{n'+1} \be^h \la) \right) \cup \left( Z_{m''}(\al^{n'+1} \be^{n'+2}) \sm Z_{m''}(\al^{n'+2} \be^{n'+2})\right).
        \end{align*}
        For $n+2 \le h \le n'+1$ and $\la \in \La_2^1$, $Z_{m''}(\al^{n'+1} \be^h \la) \in P^{(3)}_{m'', n'+1}$. And $Z_{m''}(\al^{n'+1} \be^{n'+2}) \sm Z_{m''}(\al^{n'+2} \be^{n'+2}) \in P^{(2)}_{m'', n'+1}$.

        Hence $F'$ is refined by $Q^{m''}_{n'+1}$.

        \item If $F \in P^{(3)}_{m',n+1}$: \begin{align*}
            F' &= Z_{m''}(\al^{n'+1}\be^{n+1}\la)\\
            &\in P^{(3)}_{m'', n'+1}\\
            &\sse Q_{n'+1}^{m''}.
        \end{align*}

        \item If $F \in P^{(4)}_{m',n+1}$: 
        \begin{align*}
            F' &= Z_{m''}(\al^{n'+2} \be^{n+2} ) \\
            &= \left( \bigcup_{h=n+2}^{n'+1} Z_{m''}(\al^{n'+2} \be^h) \sm Z_{m''}(\al^{n'+2}\be^{h+1}) \right) \cup Z_{m''}(\al^{n'+2} \be^{n'+2}).
        \end{align*}
        For $n+2 \le h \le n'+1$, $Z_{m''}(\al^{n'+2} \be^h) \sm Z_{m''}(\al^{n'+2}\be^{h+1}) \in P^{(1)}_{m'', n'+1}$. And $Z_{m''}(\al^{n'+2} \be^{n'+2}) \in P^{(4)}_{m'', n'+1}$.
        
        Hence $F'$ is refined by $Q_{n'+1}^{m''}$.
    \end{enumerate}
    Therefore, in all cases, $\mc{F}$ is refined by $Q_{n+m}^{m''}$.
\end{proof}

\begin{Proposition}
    \label{prop F is refined by (c=alpha)}
    Let $j \geq 1, \ell \geq 0, m = m(j,\ell), j' \geq j, \ell' \ge m,$ and $m' = m(j', \ell')$. Let $\De = (\de, \ep) =(\mu\al^r, \nu\be^s) $ with $|\de| = |\ep| = p_j + \ell,$ and $\De' = (\de', \ep') = (\nu\al^{r'}, \nu' \be^{s'}) \in S_{j',\ell'}$, where $(\mu, \nu) \in S_j$ and $(\nu, \nu') \in S_{j'}$. Let $n \geq m$ and fix $E \in Q_n^{m'}$.  Then the collection $\mc{F} = \{\si^{\be^s}(F) : F \in E_{n+1}^{m'} \}$ is refined by $Q_{n+m}^{m'+s}$.
\end{Proposition}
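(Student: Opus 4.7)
The plan is to parallel the structure of the proof of Proposition \ref{prop F is refined by (c=beta)}, replacing the prepending operation $F \mapsto \al^r F$ with the left-shift $F \mapsto \si^{\be^s}(F)$. I proceed by cases on $E$: first $E \in Q_n^{m'} \sm P_{m',n}$, then $E \in P^{(i)}_{m',n}$ for each $i = 1,2,3,4$.

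For $E = \th R \in Q_n^{m'} \sm P_{m',n}$ with $\th \in \Phi_n^{m'}$, $|\th|\ge 1$, and $R \in P_{m'+|\th|, n-|\th|}$: if $\th \not\Cap \be^s$, every $F \in E_{n+1}^{m'}$ gives $\si^{\be^s}(F) = \emp$, trivially refined. Otherwise, since $\th_{|\th|} \in \La_2^1$ while $\be^s \in \La_1$, Proposition \ref{prop extension} forces $\th$ to extend $\be^s$, so $\th = \be^s \th'$ with $\th' \in \Phi_{n-s}^{m'+s}$. By Lemma \ref{lem equiv of Qs}, $E_{n+1}^{m'} \sse \th Q_{n+1-|\th|}^{m'+|\th|}$, hence $\si^{\be^s}(E_{n+1}^{m'}) \sse \th' Q_{n+1-|\th|}^{m'+|\th|} \sse Q_{n+1-s}^{m'+s}$, which is refined by $Q_{n+m}^{m'+s}$ because $s \le m$.

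For each case $E \in P^{(i)}_{m',n}$, I enumerate $E \cap Q_{n+1}^{m'}$ exactly as done in the proof of Proposition \ref{prop F is refined by (c=beta)} (following the case structure from Lemma \ref{lem Q_n+1 refines Q_n}) and then compute $\si^{\be^s}(F)$ for each $F$ in that enumeration. The workhorse identity is $\si^{\be^s}(Z_{m'}(\al^I \be^J \ka)) = Z_{m'+s}(\al^I \be^{J-s} \ka)$ whenever $J \ge s$ (using that $\al$ and $\be$ commute in $\La_1$, so $Z_{m'}(\al^I \be^J) \sse Z_{m'}(\be^s)$), while $\si^{\be^s}(F) = \emp$ when $J < s$. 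Empty images are trivially refined; nonempty images land in $P^{(k)}_{m'+s, n-s}$-style cells, possibly requiring further decomposition into $P^{(3)}$- or $P^{(4)}$-cells when the resulting $\al$-count exceeds the available depth, just as in Case~2 of Proposition \ref{prop F is refined by (c=beta)}.

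The main technical obstacle will be the bookkeeping in the subcases of $P^{(2)}_{m',n}$ and $P^{(4)}_{m',n}$ where the shifted $\be$-count drops below the $\al$-count: there the image of a single cylinder must be expanded as a finite union of $P^{(3)}$-cells indexed by edges of $\La_2^1$ together with one $P^{(2)}$- or $P^{(4)}$-cell, mirroring the expansions in Cases~2 and~4 of the companion proposition. The crucial numerical bound throughout is $s \le m$, which guarantees that the depth of every resulting cell is at most $n+m$; hence each image is refined by $Q_{n+m}^{m'+s}$. The argument is otherwise a routine, tedious case check parallel to the one already carried out for $c = \be$.
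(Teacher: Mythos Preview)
Your proposal is correct and follows essentially the same approach as the paper: a case analysis on whether $E \in Q_n^{m'} \sm P_{m',n}$ or $E \in P^{(i)}_{m',n}$, with the expansions in the $P^{(2)}$ and $P^{(4)}$ subcases handled exactly as you anticipate.

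One small correction: your appeal to Proposition~\ref{prop extension} in the $E = \th R$ case is misplaced, since that proposition requires both paths to lie in some $\Phi_{\bullet}^{\bullet}$, but $\be^s$ does not (its last edge is in $\La_1^1$). The conclusion that $\th$ extends $\be^s$ is still correct and follows directly from the normal-form structure (via \cite[Lemma~3.4]{MS22}): since $\th \Cap \be^s$, $\be^s \in \La_1$, and $\th_{|\th|} \in \La_2^1$, the path $\th$ must properly extend $\be^s$. Also, your claim that ``$s \le m$'' is the crucial numerical bound is slightly off: shifting by $\be^s$ \emph{decreases} depth, so the resulting cells have depth at most $n+1$, and the relevant inequality is simply $n+1 \le n+m$, i.e.\ $m \ge 1$. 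The bound $s \le m$ is true but not what drives the argument here (unlike the $c = \be$ case, where prepending $\al^r$ increases depth by $r$).
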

\begin{proof}
    Let $m'' = m'+s$ and $n' =n-s$. For any $G \sse Z_{m'}$, let $G' = \si^{\be^s}(G)$.

    If $\si^{\be^s}(E) = \emp$, then no extension of $\be^s$ is an element of $E$. Hence, the same must be true for any subset of $E$, so for any $F \in E_{n+1}^{m'}$, $F' = \emp$. So we shall assume $\si^{\be^s}(E) \neq \emp$.
    
    If $E \in Q^{m'}_n \sm P_{m', n}$, then $E = \th R$ for some $\th \in \Phi_n^{m'}$ with $|\th| \ge 1$, and some $R \in P_{m'+|\th|, n-|\th|}$. Since $\th_{|\th|} \in \La_2^1$ and $\th \Cap \be^s$, it follows that $\th \in \be^s \La$, so write $\th = \be^s \th'$. Thus $E \sse Z(\th)$, and so $E \cap Q_{n+1}^{m'} \sse Z(\th) \cap Q_{n+1}^{m'} = \th Q_{n+1 - |\th|}^{m'+|\th|}$. Hence if $F \in E^{m'}_{n+1}$, $F' \in \th' Q_{n+1-|\th'|}^{m'+|\th'|} \sse Q_{n+1-s}^{m'+s}$. And $n-s+1 \leq n+m$, so $\mc{F}$ is refined by $Q_{n+m}^{m''}$.

    Otherwise, $E \in P_{m',n}$. This breaks down into several cases, each of which may also involve several cases...

    \textbf{Case 1:} $E = Z_{m'}(\al^{n+1}\be^{J+s}) \sm Z_{m'}(\al^{n+1}\be^{J+s+1}) \in P_{m',n}^{(1)}$ for some $J \leq n'$. Then
    \begin{align*}
        E \cap (Q^{m'}_{n+1} \sm P_{m', n+1}) &= \emp\\
        E \cap P^{(1)}_{m',n+1} &= \{ Z_{m'}(\al^{n+2}\be^{J+s}) \sm Z_{m'}(\al^{n+2} \be^{J+s+1}) \}\\
        E \cap P^{(2)}_{m',n+1} &= \emp\\
        E \cap P^{(3)}_{m',n+1} &= \{Z_{m'}(\al^{n+1}\be^{J+s} \la) : \la \in \La_2^1\} \\
        E \cap P^{(4)}_{m',n+1} &= \emp
    \end{align*}
    \begin{enumerate}
        \item If $F \in P_{m', n+1}^{(1)}$, then $F' = Z_{m''}(\al^{n+2} \be^{J}) \sm Z_{m''}(\al^{n+2}\be^{J+1})$, which is in $P_{m'',n+1}^{(1)}$.

    \item If $F \in P^{(3)}_{m',n+1}$, then $F'= Z_{m''}(\al^{n+1}\be^{J}\la) \in P^{(3)}_{m'',n+1}$ for some $\la \in \La_2^1$.
    \end{enumerate}
    So $\mc{F}$ is refined by $Q_{n+1}^{m''}$, which is refined by $Q_{n+m}^{m''}$.

    \textbf{Case 2:} $E = Z_{m'}(\al^{I}\be^{n+1}) \sm Z_{m'}(\al^{I+1}\be^{n+1}) \in P_{m',n}^{(2)}$ for some $I \leq n$. Then
    \begin{align*}
        E \cap (Q^{m'}_{n+1} \sm P_{m', n+1}) &= \emp\\
        E \cap P^{(1)}_{m',n+1} &= \emp\\
        E \cap P^{(2)}_{m',n+1} &= \{Z_{m'}(\al^I\be^{n+2}) \sm Z_{m'}(\al^{I+1}\be^{n+2})\} \\
        E \cap P^{(3)}_{m',n+1} &= \{ Z_{m'}(\al^{I}\be^{n+1} \la) : \la \in \La_2^1\} \\
        E \cap P^{(4)}_{m',n+1} &= \emp
    \end{align*}

    If $F \in P^{(2)}_{m'',n+1}$, then $F' = Z_{m''}(\al^I \be^{n'+2}) \sm Z_{m''}(\al^{I+1}\be^{n'+2}).$ 
    \begin{enumerate}
        \item If $I \le n'+1$ then $F' \in P^{(2)}_{m'',n+1}$, and so is refined by $Q_{n+m}^{m''}$.

        \item Otherwise, $n'+1 < I \leq n$, and
        \begin{align*}
            F' &= Z_{m''}(\al^I \be^{n'+2}) \sm Z_{m''}(\al^{I+1}\be^{n'+2})\\
            &= \left( \bigcup_{h=n'+2}^{I-n'-1} \bigcup_{\la \in \La_2^1} Z_{m''}(\al^{I}\be^h \la) \right) \cup Z_{m''}(\al^I \be^{I+1}) \sm Z_{m''}(\al^{I+1} \be^{I+1}).
        \end{align*}
        For $n'+2 \leq h \leq I-n' -1$ and $\la \in \La_2^1$, $Z_{m''}(\al^I\be^h \la) \in P_{m'',I}^{(3)}$. And $Z_{m''}(\al^I \be^{I+1}) \sm Z_{m''}(\al^{I+1} \be^{I+1}) \in P_{m'', I}^{(2)}$. Hence $F'$ is refined by $Q_I^{m''}$, and since $I \le n$, this is refined by $Q_n^{m''}$.
    \end{enumerate}
    Therefore, $\mc{F}$ is refined by $Q_{n+m}^{m''}$.

    \textbf{Case 3:} $E = Z_{m'}(\al^{I}\be^{J+s} \la) \in P_{m',n}^{(3)}$ for some $I, J$ such that $I, (J+s) \leq n \leq I+J+s$ and some $\la \in \La_2^1$. Let $\ze=\al^I \be^{J+s} \la$. Then
    \begin{align*}
        E \cap (Q^{m'}_{n+1} \sm P_{m', n+1}) &= \begin{cases}
            \emp &: I+J+s \geq n+1\\
            \ze P_{m'+n+1,0}^{(2)} &: I+J+s = n
        \end{cases}\\
        E \cap P^{(1)}_{m',n+1} &= \emp\\
        E \cap P^{(2)}_{m',n+1} &= \emp\\
        E \cap P^{(3)}_{m',n+1} &= \begin{cases}
            \{E\} &: I+J+s \geq n+1\\ \emp &: I+J+s = n
        \end{cases}\\
        E \cap P^{(4)}_{m',n+1} &= \emp
    \end{align*}
    \begin{enumerate}
        \item First suppose $I+J+s \ge n+1$. Then if $F \in E_{n+1}^{m'}$, $F = E$, so $F' = E' \in P^{(3)}_{m'',n'}$, which is refined by $Q_{n+m}^{m''}$.

        \item Next suppose $I+J+s = n$. Then $I+J = n'$, so $I \leq n'$. If $F \in E_{n+1}^{m'}$, then $F \in \ze P_{m'+n+1, 0}$. Let $\ze' = \si^{\be^s}(\ze)$. Then $F' \in \ze' P_{m'+n+1,0}$, and \begin{align*}
        |\ze'| = |\ze|-s = (I+J+s + 1)-s = (n+1)-s = n'+1.
        \end{align*}
        Since $\ze'_{|\ze'|} = \la \in \La_2^1$, it follows that $\ze' \in \Phi_{n'+1}^{m''}$. And \[m' + n+1 = m' + s+n-s+1 = m'' + n' + 1 = m'' + |\ze'|,\] hence $P_{m'+n+1,0} = P_{m'' + |\ze'|, n'+1 - |\ze'|}$, so $F' \in \ze' P_{m'' + |\ze'|, n'+1 - |\ze'|} \sse Q_{n'+1}^{m''}$.
    \end{enumerate}
    Therefore $\mc{F}$ is refined by $Q_{n+m}^{m''}$.

    \textbf{Case 4:} $E = Z_{m'}(\al^{n+1}\be^{n+1}) \in P^{(4)}_{m', n}$. Then
    \begin{align*}
        E \cap (Q^{m'}_{n+1} \sm P_{m', n+1}) &= \emp\\
        E \cap P^{(1)}_{m',n+1} &= \{Z_{m'}(\al^{n+2}\be^{n+1}) \sm Z_{m'}(\al^{n+2}\be^{n+2})\} \\
        E \cap P^{(2)}_{m',n+1} &= \{ Z_{m'}(\al^{n+1}\be^{n+2}) \sm Z_{m'}(\al^{n+2} \be^{n+2}) \}\\
        E \cap P^{(3)}_{m',n+1} &= \{ Z_{m'}(\al^{n+1}\be^{n+1} \la) : \la \in \La_2^1\} \\
        E \cap P^{(4)}_{m',n+1} &= \{Z_{m'}(\al^{n+2}\be^{n+2})\}
    \end{align*}
    \begin{enumerate}
        \item If $F \in P^{(1)}_{m',n+1}$, then $F' = Z_{m''}(\al^{n+2}\be^{n'+1}) \sm Z_{m''}(\al^{n+2}\be^{n'+2}) \in P_{m'',n+1}^{(1)}$.

    \item If $F \in P^{(2)}_{m',n+1}$, then 
    \begin{align*}
        F' &= Z_{m''}(\al^{n+1}\be^{n'+2})\sm Z_{m''}(\al^{n+2}\be^{n'+1})\\
        &= \left( \bigcup_{h=n'+2}^{n+1} \bigcup_{\la\in\La_2^1} Z_{m''}(\al^{n+1}\be^{h}\la) \right) \cup \left( Z_{m''}(\al^{n+1}\be^{n+2})\sm Z_{m''}(\al^{n+2} \be^{n+2})\right).
    \end{align*}
    For $n'+2 \le h \le n+1$ and $\la \in \La_2^1$, $Z_{m''}(\al^{n+1}\be^{h}\la) \in P^{(3)}_{m'', n+1}$, and we have that $Z_{m''}(\al^{n+1}\be^{n+2})\sm Z_{m''}(\al^{n+2} \be^{n+2}) \in P^{(2)}_{m'', n+1}$.

    So $F'$ is refined by $Q_{n+1}^{m''}$.

    \item If $F \in P^{(3)}_{m',n+1}$, then $F' = Z_{m''}(\al^{n+1}\be^{n'+1} \la)$ for some $\la \in \La_2^1$, and so $F' \in P^{(3)}_{m'', n+1}$.

    \item If $F \in P^{(4)}_{m',n+1}$, then \begin{align*}
        F' &= Z_{m''}(\al^{n+2}\be^{n'+2})\\
        &= \left( \bigcup_{h=n'+2}^{n+1} Z_{m''}(\al^{n+2}\be^{h}) \sm Z_{m''}(\al^{n'+2}\be^{h+1}) \right) \cup Z_{m''}(\al^{n+2}\be^{n+2}).
    \end{align*}
    For $n'+2 \le h \le n+1$, $Z_{m''}(\al^{n+2}\be^{h}) \sm Z_{m''}(\al^{n'+2}\be^{h+1}) \in P^{(1)}_{m'',n+1}$. And $Z_{m''}(\al^{n+2}\be^{n+2}) \in P^{(4)}_{m'',n+1}$. So $F'$ is refined by $Q^{m''}_{n+1}$.
    \end{enumerate}
    Therefore, $\mc{F}$ is refined by $Q^{m''}_{n+m}$.
\end{proof}

\begin{Theorem}\label{thm a acts on K 3}
    Let $j \geq 1, \ell \geq 0, m = m(j,\ell), j' \geq j, \ell' \ge m,$ and $m' = m(j', \ell')$. Suppose $\De = (\de, \ep) = (\mu\al^r, \nu\be^s) $ with $|\de| = |\ep| = p_j + \ell,$ and $\De' =(\de', \ep') = (\nu c^{r'}, \nu'd^{s'}) \in S_{j', \ell'}$, where $(\mu,\nu) \in S_j$ and $(\nu,\nu') \in S_{j'}$. Let $n \geq m$, and fix $E \in Q_n^{m'}$. Let

    \begin{tabular}{l l}
    $m'' = \begin{cases}
        m(j', \ell'-r) &: c=\be\\
        m(j', \ell'+s) &: c=\al,
    \end{cases}$
    & $\De'' = \begin{cases}
        (\mu \be^{r'-s}, \nu'\al^{\ell'-r}) &: c=\be\\ (\mu\al^{r+r'}, \nu'\be^{s+\ell'}) &: c=\al.
    \end{cases}$\\
\end{tabular}

Then \[ a_{\De} \cdot K_{\De'}(E)_{n+1} \perp \bigoplus_{t \ge n+m} \bigoplus_{A \in Q_{t}^{m''}} K_{\De''}(A)_{t+1}.\]
\end{Theorem}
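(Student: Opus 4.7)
The plan is to mirror the structure of the proof of Theorem \ref{thm a acts on K 2}, but with the roles of the two refinement directions swapped: there we controlled how $a_\Delta$ acts \emph{away from} partitions coarser than $Q^{m''}_{n-m}$, and here we need to control it away from partitions finer than $Q^{m''}_{n+m}$. The input vector is a typical $u \in K_{\De'}(E)_{n+1}$, expanded as $u = \sum_{F \in E^{m'}_{n+1}} c_F \eta_{\De',F}$ for scalars $c_F \in \C$. Since the two cases $c = \beta$ and $c = \alpha$ are handled by parallel lemmas, I would treat them uniformly by introducing the shorthand $F' = \alpha^r F$ when $c = \beta$ and $F' = \sigma^{\beta^s}(F)$ when $c = \alpha$.

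First, I would apply Lemma \ref{lem a acts on vector 2} termwise to get
\[
a_{\De} \cdot u \;=\; \sum_{F \in E^{m'}_{n+1}} c_F\, \eta_{\De'',\,F'},
\]
with $\De''$ and $m''$ exactly as declared in the statement of the theorem. In particular, $a_{\De} \cdot u$ lies in $\spn\{\eta_{\De'',G} : G \in \mc{F}\}$, where $\mc{F} := \{F' : F \in E^{m'}_{n+1}\}$ is a finite collection of subsets of $Z_{m''}$.

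Next, I would invoke Proposition \ref{prop F is refined by (c=beta)} in the $c=\beta$ case, or Proposition \ref{prop F is refined by (c=alpha)} in the $c=\alpha$ case, to conclude that $\mc{F}$ is refined by $Q^{m''}_{n+m}$. With that refinement in hand, Corollary \ref{coro if F is refined by 3} (applied with $k = n+m$ and this choice of $\mc{F}$) immediately yields
\[
a_{\De}\cdot u \;\perp\; \bigoplus_{t \ge n+m}\ \bigoplus_{A \in Q^{m''}_t} K_{\De''}(A)_{t+1}.
\]
Since $u \in K_{\De'}(E)_{n+1}$ was arbitrary, the stated orthogonality of the whole subspace $a_{\De} \cdot K_{\De'}(E)_{n+1}$ follows.

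At this stage of the paper the main obstacle — verifying that the image collection $\mc{F}$ refines $Q^{m''}_{n+m}$ through a careful case analysis on the type of $E$ in $Q^{m'}_n$ — has already been absorbed into Propositions \ref{prop F is refined by (c=beta)} and \ref{prop F is refined by (c=alpha)}. The only residual subtlety in writing out the proof is verifying that the definitions of $m''$ and $\De''$ match between Lemma \ref{lem a acts on vector 2} and the statement of Theorem \ref{thm a acts on K 3} (they do, once one identifies $m(j',\ell'-r) = m'-r$ and $m(j',\ell'+s) = m'+s$), and noting that the hypothesis $n \ge m$ is what makes Proposition \ref{prop F is refined by (c=alpha)} applicable. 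Thus the actual proof collapses to one or two lines citing the three ingredients above.
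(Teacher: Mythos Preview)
Your proposal is correct and follows essentially the same argument as the paper: apply Lemma \ref{lem a acts on vector 2} to see $a_\De\cdot u \in \spn\{\eta_{\De'',F'} : F' \in \mc{F}\}$, invoke Propositions \ref{prop F is refined by (c=beta)} and \ref{prop F is refined by (c=alpha)} to get that $\mc{F}$ is refined by $Q_{n+m}^{m''}$, and then conclude via Corollary \ref{coro if F is refined by 3}. Your observations about the matching of $m''$ and the role of the hypothesis $n\ge m$ are accurate.
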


\begin{proof}
    If $F \sse Z_{m'}$, let $$F' = \begin{cases}
        \al^r F &: c = \be\\
        \si^{\be^s}(F) &: c = \al,
    \end{cases}$$
    and let $\mc{F} = \{ F' : F \in E_{n+1}^{m'}\}$.

    Suppose $u \in K_{\De'}(E)_{n+1}^{m'}$. Then $u \in \spn\{ \eta_{\De', F} : F \in E_{n+1}\}$. 
    By Lemma \ref{lem a acts on vector 2}, for all $F \in E_{n+1}^{m'}$, 
    $a_{\De} \cdot \eta_{\De', F} = \eta_{\De'', F'}$, hence $a_{\De} \cdot u \in \spn\{ a_{\De} \cdot \eta_{\De', F} : F \in E_{n+1}^{m'}\} = \spn\{ \eta_{\De', F'} : F' \in \mc{F}\}$. 

    By Propositions \ref{prop F is refined by (c=beta)} and \ref{prop F is refined by (c=alpha)}, the collection $\mc{F}$ is refined by $Q_{n+m}^{m''}$. So by Corollary \ref{coro if F is refined by 3}, 
    \begin{align*}
        a_{\De} \cdot u \perp \bigoplus_{t \ge n+m} \bigoplus_{A \in Q_t^{m''}} K_{\De''}(A)_{t+1}.
    \end{align*}
    Since $u \in K_{\De'}(E)_{n+1}$ was arbitrary, the result follows.
\end{proof}

Now the proof of Theorem \ref{thm bandwidth} is quite brief.
\begin{proof}\label{proof of thm bandwidth} \textit{(of \ref{thm bandwidth})}
    If $\mu'$ properly extends $\ep$, then Theorem \ref{thm a acts on K 1} says $a_{\De} \cdot K_{\De'}(E)_{n+1} \sse K_{\De''}(E)_{n+1}$, where $\De'' \in S_{j',\ell'}$.

    Now suppose $\mu' = \nu$. First, let
    \begin{align*}
        p &= \begin{cases}
            \ell'-r &: c=\be\\
            \ell'+s &: c=\al,
        \end{cases}\\
        \De'' &= \begin{cases}
            (\mu \be^{r'-s}, \nu'\al^{\ell'-r}) &: c=\be\\
            (\mu\al^{r+r'}, \nu'\be^{s+\ell'}) &: c=\al.
        \end{cases}
    \end{align*}
    Then $\De'' \in S_{j',p}$, and since $r, s \le m$, it follows that $\ell' - m \le p \le \ell'+m$.
    
    Let $\mc{F}$ be defined as in the proof of Theorem \ref{thm a acts on K 3}, so that $a_{\De} \cdot K_{\De'}(E)_{n+1} \sse \spn\{ \eta_{\De'', F'} : F' \in \mc{F}\}$. By Propositions \ref{prop F is refined by (c=beta)} and \ref{prop F is refined by (c=alpha)}, $\mc{F}$ is refined by $Q_{n+m}^{m(j',p)}$, which implies that for each $F' \in \mc{F}$, $\eta_{\De'',F'} \in \spn\{ \eta_{\De'',G} : G \in Q_{n+m}^{m(j',p)} \}$. Therefore,
    \begin{align*}
        a_{\De} K_{\De'}(E)_{n+1} \sse \spn\{ \eta_{\De'',G} : G \in Q_{n+m}^{m(j',p)} \}
    \end{align*}
    so (allowing for $c = \al$ or $\be$),
    \begin{multline*}
        a_{\De} K_{\De'}(E)_{n+1} \sse \spn\{ \eta_{\Ga,G} : \Ga \in S_{j',q}, \ G \in Q_{t}^{m(j',q)}, \ell-m \le q \le \ell+m,\\ \ n-m \le t \le n+m \}
    \end{multline*}
    
    By Theorem \ref{thm a acts on K 2},
    \begin{align*}
        a_{\De} \cdot K_{\De'}(E)_{n+1} \perp \bigoplus_{t < n-m} \bigoplus_{G \in Q_{t}^{m(j',p)}} K_{\De''}(G)_{t+1},
    \end{align*}
    and by Theorem \ref{thm a acts on K 3}
    \begin{align*}
        a_{\De} \cdot K_{\De'}(E)_{n+1} \perp \bigoplus_{t \ge n+m} \bigoplus_{G \in Q_{t}^{m(j',p)}} K_{\De''}(G)_{t+1}.
    \end{align*}
    Hence,
    \begin{align*}
        a_{\De} \cdot K_{\De'}(E)_{n+1} \sse \bigoplus_{t = n-m}^{n+m} \bigoplus_{G \in Q_{t}^{m(j',p)}} K_{\De''}(G)_{t+1}.
    \end{align*}
    Finally, allowing for $c= \al$ or $\be$, we obtain
    \begin{align*}
        a_{\De} \cdot K_{\De'}(E)_{n+1} \sse \bigoplus_{q=\ell'-m}^{\ell'+m}\ \bigoplus_{\Ga \in S_{j',q}}\ \bigoplus_{t = n-m}^{n+m}\ \bigoplus_{G \in Q_{t}^{m(j',q)}}\ K_{\Ga}(G)_{t+1}.
    \end{align*}
\end{proof}

\begin{Theorem}\label{thm total bandwidth}
    Recall $B_0= \{\chi_{[\mu c^r, \nu d^s, Z(s(\nu d^s))]} : (\mu, \nu) \in S_j$ for some $j \ge 0$, $|\mu c^r| = |\nu d^s|, \ \{c, d\} = \{\al,\be\} \}$ from Theorem \ref{thm total set for A}. For all $a \in B_0$, there exists $j, m \ge 1$ such that if $j' > j$ and $\ell, n \ge m$, then
    \begin{equation*}
        a \cdot H_{j', \ell, n} \sse \bigoplus_{q = \ell-m}^{\ell+m} \bigoplus_{t =n-m}^{n+m} H_{j', q, t}.
    \end{equation*}
\end{Theorem}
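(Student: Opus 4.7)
My approach is to deduce this statement directly from Theorem \ref{thm bandwidth} applied summand-by-summand. Given $a \in B_0$, I will write $a = a_{\De}$ with $\De = (\mu c^r, \nu d^s)$, $(\mu, \nu) \in S_{j_0}$ for some $j_0 \ge 0$, $|\mu c^r| = |\nu d^s| = p_{j_0} + \ell_0$, and $\{c,d\} = \{\al, \be\}$, and then set $j = \max\{j_0, 1\}$ and $m = m(j_0, \ell_0)$.

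Next, fixing $j' > j$ and $\ell, n \ge m$, I will use Definition \ref{def H_j l n} (noting $j' \ge 2$) to decompose
\begin{equation*}
H_{j',\ell,n} \;=\; \bigoplus_{\De' \in S_{j',\ell}}\ \bigoplus_{E \in Q_n^{m(j',\ell)}} K_{\De'}(E)_{n+1}.
\end{equation*}
By linearity of the $A$-action it will suffice to bound $a \cdot K_{\De'}(E)_{n+1}$ for each pair $(\De', E)$. For each such pair, the hypotheses of Theorem \ref{thm bandwidth} are satisfied with ``inner'' data $(j_0, \ell_0)$ read off from $\De$ and ``outer'' data $(j', \ell)$ read off from $\De'$: indeed $j' > j \ge j_0$, and $\ell, n \ge m = m(j_0, \ell_0)$. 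Theorem \ref{thm bandwidth} (or its $\al \leftrightarrow \be$ variant noted in the Remark following it) then gives
\begin{equation*}
a \cdot K_{\De'}(E)_{n+1} \;\sse\; \bigoplus_{q=\ell-m}^{\ell+m}\ \bigoplus_{\Ga \in S_{j',q}}\ \bigoplus_{t=n-m}^{n+m}\ \bigoplus_{G \in Q_t^{m(j',q)}} K_{\Ga}(G)_{t+1} \;=\; \bigoplus_{q=\ell-m}^{\ell+m}\ \bigoplus_{t=n-m}^{n+m} H_{j',q,t},
\end{equation*}
where in the last equality I am re-folding the inner two direct sums using Definition \ref{def H_j l n} again. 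Summing over $(\De', E) \in S_{j',\ell} \times Q_n^{m(j',\ell)}$ will yield the desired inclusion.

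Since the genuine work was already carried out in Theorem \ref{thm bandwidth}, the remaining steps are really bookkeeping rather than substantive. The one subtlety I will need to address is that both orderings of $\{c, d\} = \{\al, \be\}$ occur: the case $(c,d) = (\al, \be)$ is covered by Theorem \ref{thm bandwidth} as stated, while $(c,d) = (\be, \al)$ requires the variant from the Remark following it. A second minor point is the degenerate case $j_0 = 0$, where $\mu = \nu = v_1$ and $r = s = \ell_0$: here I will take $j = 1$ and verify that the arguments supporting Theorem \ref{thm bandwidth} (in particular Proposition \ref{prop extension 2} and Lemma \ref{lem a acts on vector 2}) go through without any essential use of the hypothesis $j \ge 1$ once the prefixes are allowed to equal $v_1$. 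I do not anticipate any genuine obstacle, as the cylinder-set partition arguments never distinguish $|\mu| = 0$ from $|\mu| > 0$ in a way that would fail.
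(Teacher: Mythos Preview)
Your proposal is correct and follows essentially the same approach as the paper: set $j=\max\{j_0,1\}$, choose $m$ from the length data of $\De$, decompose $H_{j',\ell,n}$ via Definition~\ref{def H_j l n}, apply Theorem~\ref{thm bandwidth} to each $K_{\De'}(E)_{n+1}$, and re-fold. Your choice $m=m(j_0,\ell_0)$ matches the $m$ appearing in the conclusion of Theorem~\ref{thm bandwidth} exactly (the paper takes $m=|\mu\eta|=m(j_0,\ell_0)-1$, a harmless off-by-one), and you are slightly more careful than the paper in flagging the $(c,d)=(\be,\al)$ variant and the degenerate case $j_0=0$.
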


\begin{proof}
    If $a_{\De} \in B_0$ then $\De = (\mu \eta, \nu \th)$ where $(\mu, \nu) \in S_i$ for some $i \ge 0$ and $\eta, \th \in \La_1$. Let $j = \max\{i, 1\}$ and $m = |\mu \eta|$. Then let $j' > j$ and $\ell, n \ge m$.
    Definition \ref{def H_j l n} says
    \begin{align*}
        H_{j', \ell, n} &= \bigoplus_{\De \in S_{j',\ell}} \bigoplus_{E \in Q_n^{m(j',\ell)}} K_{\De}(E)_{n+1}.
    \end{align*}
    Thus Theorem \ref{thm bandwidth} implies
    \begin{align*}
        a \cdot H_{j',\ell,n} &\sse \bigoplus_{\De' \in S_{j',\ell}} \bigoplus_{E \in Q_n^{m(j',\ell)}} a_{\De} \cdot K_{\De'}(E)_{n+1}\\
        &\sse \bigoplus_{q=\ell-m}^{\ell+m} \bigoplus_{t=n-m}^{n+m} \bigoplus_{\De'' \in S_{j',q}} \bigoplus_{A \in Q_t^{m(j',t)}} K_{\De''}(A)_{t+1}\\
        &= \bigoplus_{q=\ell-m}^{\ell+m} \bigoplus_{t=n-m}^{n+m} H_{j', q, t}.
    \end{align*}
\end{proof}

\subsection{Defining the Unbounded Operator}

\begin{Lemma}\label{lem defining D}
    For $m = (m_1, m_2, m_3) \in \N^3$, let $c(m) = m_1 + 1 + m_2+ m_3$. Define an unbounded operator $D$ on $H$ by setting for $j \ge 1, \ell \ge 0,$ and $n \ge -2$, $D|_{H_{j,\ell,n}} = c(j-1, \ell, n+2) \mathds{1} = (j + \ell + n + 2)\mathds{1}$.
    Let $B_0$ be as in Theorem \ref{thm total set for A}. Then for all $a \in B_0$, $[D, a]$ extends to a bounded operator on $H$.
\end{Lemma}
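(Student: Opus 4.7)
The plan is to apply Proposition \ref{prop the point of D} after consolidating the decomposition of Theorem \ref{thm H_j decomposition} into eigenspaces of $D$. Set
$$\tilde{H}_N := \bigoplus_{\substack{j+\ell+n+2 = N\\ j \ge 1,\ \ell \ge 0,\ n \ge -2}} H_{j,\ell,n}, \qquad N \ge 1.$$
Only finitely many triples $(j,\ell,n)$ in the prescribed range sum to $N$, and each $H_{j,\ell,n}$ is finite-dimensional, so each $\tilde{H}_N$ is finite-dimensional; Theorem \ref{thm H_j decomposition} then gives $H = \bigoplus_{N \ge 1} \tilde{H}_N$. By construction $D|_{\tilde{H}_N} = N \mathds{1}$, and with $c_N := N$ the hypotheses of Proposition \ref{prop the point of D} on $(c_N)$ hold: $c_N$ strictly increases to infinity and $|c_N - c_{N'}| = |N-N'|$, so $M_k = k$ suffices.

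It remains to verify that each $a \in B_0$ satisfies a uniform bandwidth condition: there exists $K \in \N$ (depending on $a$) such that $a \tilde{H}_N \sse \bigoplus_{|N'-N|\le K} \tilde{H}_{N'}$ for every $N \ge 1$. For this, Theorem \ref{thm total bandwidth} produces $j_0, m_0 \ge 1$ with
$$a \cdot H_{j',\ell,n} \sse \bigoplus_{q=\ell-m_0}^{\ell+m_0} \bigoplus_{t=n-m_0}^{n+m_0} H_{j',q,t}$$
whenever $j' > j_0$ and $\ell, n \ge m_0$. Since $j'$ is preserved and $\ell, n$ shift by at most $m_0$, the index $N = j'+\ell+n+2$ shifts by at most $2m_0$ on these ``interior'' summands of $\tilde{H}_N$.

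The main obstacle is the remaining ``boundary'' summands of $\tilde{H}_N$, namely those $H_{j',\ell,n}$ with $j' \le j_0$, or with $j' > j_0$ but $\ell < m_0$ or $n < m_0$. For the latter, I plan to revisit Propositions \ref{prop E' refines (c=beta)}, \ref{prop E' refines (c=alpha)}, \ref{prop F is refined by (c=beta)}, \ref{prop F is refined by (c=alpha)} and Theorems \ref{thm a acts on K 2}, \ref{thm a acts on K 3}: the hypothesis $\ell, n \ge m_0$ there is invoked only to formulate the lower bounds $q \ge \ell - m_0$ or $t \ge n - m_0$ cleanly, but truncating the range at $q = 0$ or $t = -1$ leaves the upper-bound shifts of at most $m_0$ intact. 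For $j' \le j_0$, I use that $a = a_{\De_a}$ with $\De_a \in S_{j_a,\ell_a}$ lies in $A_{j_0}$, so $a$ preserves $\overline{\eta(A_{j_0})} = \bigoplus_{j \le j_0} H_j$; this caps the $j$-shift at $j_0 - 1$, and a direct unfolding of the convolution $a_{\De_a} * \chi_{[\de',\ep',G]}$ via the groupoid composition rule (as in the proofs of Lemmas \ref{lem a acts on vector 1} and \ref{lem a acts on vector 2}) controls the $\ell, n$-shifts in terms of $|\de_a|$.

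Putting these pieces together, one obtains a single $K$ (on the order of $2m_0 + j_0$) for which the bandwidth condition holds uniformly in $N$, and Proposition \ref{prop the point of D} immediately yields that $[D,a]$ extends to a bounded operator on $H$.
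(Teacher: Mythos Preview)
Your overall strategy --- feed the bandwidth estimate of Theorem \ref{thm total bandwidth} into Proposition \ref{prop the point of D} --- is the paper's strategy. The paper applies a version of Proposition \ref{prop the point of D} with index set $\N^3$, checking the scalar-net conditions for $c$ in that setting and then quoting Theorem \ref{thm total bandwidth} for the bandwidth; you instead collapse to the one-parameter family $\tilde H_N$ of level sets of $c$, which has the virtue that Proposition \ref{prop the point of D} applies verbatim and the finite-dimensionality of each $\tilde H_N$ is immediate.

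Where you go further than the paper is in flagging the boundary summands $H_{j',\ell,n}$ with $j' \le j_0$ or $\min(\ell,n) < m_0$, which Theorem \ref{thm total bandwidth} does not cover; the paper simply asserts the conclusion after verifying the bandwidth only for indices $\ge (j+1,m,m)$. Your diagnosis is correct, but your proposed fixes are not yet complete. The claim that the hypothesis $\ell', n \ge m$ in the refinement chain is used only to phrase the lower bounds cleanly is optimistic: for example, in Lemma \ref{lem a acts on vector 2} with $c = \be$ one gets $p = \ell' - r$, and without $\ell' \ge m > r$ this can be negative, so $S_{j',p}$ is undefined and the computation would need to be redone. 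And for $j' \le j_0$, invariance of $\overline{\eta(A_{j_0})}$ under $a$ caps the $j$-shift but says nothing about $\ell$ and $n$; your ``direct unfolding'' would amount to reproving Lemmas \ref{lem a acts on vector 1}--\ref{lem a acts on vector 2} and the subsequent refinement propositions without the cushion $\ell' \ge m$, which is genuine work rather than a formality. So the skeleton matches the paper's, but the boundary analysis you correctly identify as needed is not yet carried out.
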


\begin{proof}
We want to apply Proposition \ref{prop the point of D}, but with $\N$ replaced by $\N^3$, a directed set where $(m_1, m_2, m_3) \le (n_1, n_2, n_3)$ if and only if $m_i \le n_i$ for $i=1,2,3.$ We verify the necessary conditions on the scalar net:
\begin{itemize}
    \item For $i \in \N^3$, $1 \le c(i)$.
    \item If $i_1 =(j_1, \ell_1, n_1)$, $i_2 = (j_2, \ell_2, n_2) \in \N^3$ and $i_1 \le i_2$, $i_1 \neq i_2$, then $t_1 \le t_2$ for $t=j,\ell,n$, and at least one of $j_1, \ell_1, n_1$ is strictly less than its counterpart in $i_2$, so $c(i_1) <  c(i_2)$.
    \item If $k = (j_k, \ell_k, n_k) \in \N^3$, let $M_k = c(k)$. Let $i_0 = (j_0, \ell_0, n_0) \in \N^3$ be arbitrary. If $i = (j_i, \ell_i, n_i) \in \N^3$ is such that $i_0 - k \le i \le i_0 + k$, then 
    \begin{align*}
        |c(i) - c(i_0)| &= |j_i - j_0 +1 - 1 + \ell_i - \ell_0 + n_i - n_0|\\
        &\le |j_i - j_0| + 0 + |\ell_i - \ell_0| + |n_i - n_0|\\
        &\le j_k + \ell_k + n_k\\
        &< j_k + 1 + \ell_k + n_k\\
        &= M_k.
    \end{align*}
\end{itemize}
    If $a \in B_0$, then $a = \chi_{[\mu\eta, \nu\ze, Z_m]}$ where $(\mu,\nu) \in S_j$ for some $j \ge 0$ and $v_m = s(\nu\ze)$, and for all $n = (n_1, n_2, n_3) \in N^3$, if $n \ge (j+1, m, m),$ then, by Theorem \ref{thm total bandwidth},
    \begin{align*}
        a \cdot H_{n_1 + 1, n_2, n_3-2} \sse \bigoplus_{i = n-(0,m,m)}^{n+(0,m,m)} H_{i_1 + 1, i_2, i_3 - 2}.
    \end{align*}
    Thus, $[D,a]$ extends to a bounded operator on $H = \bigoplus_{n\in \N^3} H_{n_1+1, n_2, n_3-2}$.
\end{proof}

\begin{Theorem}\label{thm D is a Dirac operator}
    With $D$ as defined in Lemma \ref{lem defining D}, $(A, H, D)$ is a spectral triple.
\end{Theorem}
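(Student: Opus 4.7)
The plan is to verify the three requirements for $(A, H, D)$ to be a spectral triple: that $D$ is unbounded and self-adjoint, that condition (a) of Definition \ref{spectral triple def} holds via a density/algebra argument, and that condition (b) holds by exhibiting $D$ as having discrete spectrum with finite-multiplicity eigenvalues tending to infinity.

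First, I would observe that by Theorem \ref{thm H_j decomposition} and the decomposition $H = \bigoplus_{j \geq 1} H_j$, we have $H = \bigoplus_{j \geq 1, \ell \geq 0, n \geq -2} H_{j,\ell,n}$ as an orthogonal direct sum of finite-dimensional subspaces. Since $D$ acts on $H_{j,\ell,n}$ as the real scalar $c(j-1,\ell,n+2) = j+\ell+n+2$, the operator $D$ is diagonalizable with real spectrum and thus self-adjoint on its natural domain $\dom(D) = \{\xi = \sum \xi_{j,\ell,n} : \sum (j+\ell+n+2)^2 \|\xi_{j,\ell,n}\|^2 < \infty\}$. It is unbounded since $j+\ell+n+2 \to \infty$.

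Next, for condition (a), Lemma \ref{lem defining D} already shows $[D,a]$ extends to a bounded operator for every $a \in B_0$. The set $B := \{a \in A : [D,a] \text{ is densely defined and extends to a bounded operator}\}$ is a $*$-subalgebra of $A$: closure under scalar multiplication and addition is immediate, closure under products follows from the Leibniz rule $[D, ab] = [D,a]b + a[D,b]$ on $\dom(D) \cap \text{appropriate domain}$, and closure under adjoints follows since $D = D^*$. Moreover $B_0$ is itself closed under adjoints, as $\chi_{[\mu c^r, \nu d^s, Z(s(\nu d^s))]}^* = \chi_{[\nu d^s, \mu c^r, Z(s(\mu c^r))]} \in B_0$. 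Hence the $*$-algebra generated by $B_0$ lies in $B$, and by Theorem \ref{thm total set for A} this $*$-algebra (which contains all finite convolutions of elements of $\spn(B_0)$) is norm-dense in $A$. Therefore $B$ is norm-dense in $A$.

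Finally, for condition (b), I would show $(1+D^2)^{-1}$ is compact by identifying the spectrum of $D$. The eigenvalues of $D$ are the values $k = j+\ell+n+2$ for $(j,\ell,n)$ with $j \geq 1, \ell \geq 0, n \geq -2$; that is, $k$ ranges over $\{1, 2, 3, \ldots\}$. For each such $k$, the eigenspace is $E_k = \bigoplus_{j+\ell+n+2 = k} H_{j,\ell,n}$, and the set of admissible triples $(j, \ell, n)$ satisfying $j+\ell+n = k-2$ with the stated inequalities is finite; combined with finite-dimensionality of each $H_{j,\ell,n}$, each $E_k$ is finite-dimensional. Thus $(1+D^2)^{-1}$ is a self-adjoint operator with eigenvalues $(1+k^2)^{-1} \to 0$ each of finite multiplicity, hence compact. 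The main subtlety to confirm is that $B$ actually contains a dense $*$-subalgebra (rather than only $B_0$), but this is handled cleanly by the Leibniz rule together with Theorem \ref{thm total set for A}; there is no genuine obstacle, and the argument is essentially bookkeeping given the heavy lifting already performed in Sections~3.2 and~3.3.
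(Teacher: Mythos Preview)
Your proposal is correct and follows essentially the same approach as the paper: both use the Leibniz rule to pass from $B_0$ to the (sub)algebra it generates, invoke Theorem~\ref{thm total set for A} for density, and deduce compactness of $(1+D^2)^{-1}$ from the fact that $D$ is diagonal on the finite-dimensional blocks $H_{j,\ell,n}$ with eigenvalues tending to infinity. The only cosmetic differences are that you explicitly verify self-adjointness (which the paper leaves implicit) and phrase compactness via the finite-multiplicity eigenvalue criterion, whereas the paper writes down explicit finite-rank truncations converging in norm.
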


\begin{proof}
    First, if $a, b\in A$ and $[D, a]$ and $[D, b]$ extend to bounded operators on $H$, then $[D, ab] = [D,a]b + a[D, b]$. Since $a, b \in A \sse B(H)$, $[D,ab]$ also extends to a bounded operator on $H$. By induction, then, if $a_1, \dots, a_k \in A$ and each $[D, a_i]$ extends to a bounded operator on $H$, then so does $[D, a_1 \cdots a_k]$. And it is clear that if $\la \in \C$ then $[D, \la a + b] = \la [D, a] + [D,b]$, so $[D, \la a + b]$ extends to a bounded operator on $H$ as well.

    Let $B$ be the set consisting of all linear combinations of finite convolutions of elements of $B_0$. Then $B$ is dense $A$ by Theorem \ref{thm total set for A}, and the previous paragraph combined with Lemma \ref{lem defining D} imply that for all $a \in B$, $[D,a]$ extends to a bounded operator on $H$.

    Lastly, for $j \ge 1, \ell,n \ge 0,$ we see that $(1+D^2)^{-1}|_{H_{j,\ell,n}} = 1/(1 + c(j-1,\ell,n+2)^2)$. Let $T_{j,\ell,n} = \sum_{j' \le j, \ell' \le \ell, n' \le n} (1+D^2)^{-1}|_{H_{j', \ell', n'}}$, a finite rank operator, and $\|(1+D^2)^{-1} - T_{j,\ell,n}\| \to 0$ as $(j,\ell,n)$ increases: 
    \begin{multline*}
        \| (1+D^2)^{-1} - T_{j,\ell,n} \| =\\ \sup \{ 1/(1 + (j' + \ell' + n' + 2)^2) : \text{ at least one of } j' > j, \ell' > \ell, \text{ or } n' > n \}.
    \end{multline*}
    Hence by \cite[Theorem 4.4]{Con90}, $(1+D^2)^{-1}$ is compact.
\end{proof}

  \bibliographystyle{plain}

  \bibliography{references}

\end{document}